\newcommand{\bb}[1]{\mathbb{#1}}
\newcommand{\cc}[1]{\mathcal{#1}}
\newcommand{\sminus}{\smallsetminus}
\newcommand{\Mod}{\!\!\pmod}
\DeclareMathOperator{\Ap}{Ap}
\newcommand{\C}{{\mathbb C}}
\newcommand{\Z}{{\mathbb Z}}
\newcommand{\N}{{\mathbb N}}
\newcommand{\R}{{\mathbb R}}
\let\oldenumerate\enumerate
\renewcommand{\enumerate}
	{
	\oldenumerate
	\setlength{\itemsep}{20pt}
	\setlength{\parskip}{2pt}
	\setlength{\parsep}{2pt}
	\setlength{\parindent}{1cm}
	}
\let\@@pmod\pmod
\DeclareRobustCommand{\pmod}{\@ifstar\@pmods\@@pmod}
\def\@pmods#1{\mkern4mu({\operator@font mod}\mkern 6mu#1)}
\theoremstyle{plain}\newtheorem{proposition}{Proposition}[section]
	\newtheorem{theorem}[proposition]{Theorem}
	\newtheorem{cor}[proposition]{Corollary}
	\newtheorem{lem}[proposition]{Lemma}
	\newtheorem{conjecture}[proposition]{Conjecture}
	\newtheorem{problem}[proposition]{Problem}
\theoremstyle{remark}\newtheorem{example}[proposition]{Example}
	\newtheorem*{remark}{Remark}
\begin{document}

\title[Numerical Sets and Core Partitions]{Numerical Sets, Core Partitions, and Integer Points in Polytopes}

\author[H. Constantin]{Hannah Constantin}
\address{Department of Mathematics\\
University of Toronto\\
Toronto, ON M5S 2E4, Canada}
\email{hconstan@math.utoronto.ca}

\author[B. Houston-Edwards]{Ben Houston-Edwards}
\address{Department of Mathematics\\
Yale University\\
New Haven, CT 06511, USA}
\email{benjamin.houston-edwards@yale.edu}

\author[N. Kaplan]{Nathan Kaplan}
\address{Department of Mathematics\\
University of California\\
Irvine, CA 92697, USA}
\email{nckaplan@math.uci.edu}

\date{\today}

\begin{abstract}
We study a correspondence between numerical sets and integer partitions that leads to a bijection between simultaneous core partitions and the integer points of a certain polytope.  We use this correspondence to prove combinatorial results about core partitions.  For small values of $a$, we give formulas for the number of $(a,b)$-core partitions corresponding to numerical semigroups.  We also study the number of partitions with a given hook set.
\end{abstract}

\maketitle


\section{Introduction}

A large number of recent papers have studied statistical questions about sizes of simultaneous core partitions \cite{Aggarwal, Amdeberhan, Armstrong, Chen, Zeilberger, Johnson, Stanley, Thiel, Wang, Xiong1, Xiong2, Yang}.  One of the larger successes in this area is Johnson's proof of Armstrong's conjecture, which we state as Theorem \ref{ArmstrongsConj} below \cite{Johnson}.  Broadly, these problems address questions of the following type: Given a finite set of partitions, for example, the set of simultaneous $(a,b)$-core partitions, what can we say about the statistical properties of their sizes?  We use a correspondence between numerical sets and partitions to study these types of questions for partitions coming from families of numerical semigroups and for partitions with a fixed hook set.  

We first briefly introduce some notation necessary to explain our main results. A \emph{partition} $\lambda$ of $n$ is a sequence of positive integers $\lambda_1 \ge \lambda_2 \ge \cdots \ge  \lambda_k \ge 1$ whose sum is $n$.  We refer to the $\lambda_i$ as the \emph{parts} of the partition $\lambda$.  We represent a partition by its \emph{Young diagram}, a series of left aligned rows of boxes in which there $\lambda_i$ boxes in row $i$.  For any box of the Young diagram, its \emph{hook length} is the number of boxes directly to the right of it, plus the number of boxes directly below it, plus one for the box itself. 
We denote by $H(\lambda)$ and $\cc H(\lambda)$ the \emph{hook set} and \emph{hook multiset} of $\lambda$---the set and multiset of hook lengths, respectively.

	\begin{figure}[h] \label{hookLengthPic}
	\caption{Young diagram and hook lengths of the partition $(4,2,2)$. This partition is both a $4$-core and a $7$-core.}
	\centering
			\begin{tikzpicture}[scale = 0.6]
			
				\begin{scope}
					\foreach \x in {0,1,2,3}
						{ \draw (\x,2) rectangle ++(1,1);}
					\foreach \x in {0,1}
						{ \draw (\x,1) rectangle ++(1,1);}
					\foreach \x in {0,1}
						{ \draw (\x,0) rectangle ++(1,1);}
					
					\begin{scope}[blue]
						\node (a) at (0.5,1.5) {};
						\filldraw [radius=0.3] (a) circle;
						\filldraw [rounded corners =0.5mm] (a) -- ++(0,-0.1) rectangle ++(1.2,0.2);
						\filldraw [rounded corners = .5mm] (a) -- ++(-0.1,0) rectangle ++(0.2,-1.2);
					\end{scope}
				\end{scope}

				\begin{scope}[xshift = 6cm]
					\foreach \x in {0,1,2,3}
						{ \draw (\x,2) rectangle ++(1,1);}
					\foreach \x in {0,1}
						{ \draw (\x,1) rectangle ++(1,1);}
					\foreach \x in {0,1}
						{ \draw (\x,0) rectangle ++(1,1);}
					
					\begin{scope}
						\node at (0.5,1.5) {3};
					\end{scope}
				\end{scope}

				\begin{scope}[xshift = 12cm]
					\foreach \x in {0,1,2,3}
						{ \draw (\x,2) rectangle ++(1,1);}
					\foreach \x in {0,1}
						{ \draw (\x,1) rectangle ++(1,1);}
					\foreach \x in {0,1}
						{ \draw (\x,0) rectangle ++(1,1);}
					
					\begin{scope}
						\node at (0.5,2.5) {6};
						\node at (1.5,2.5) {5};
						\node at (2.5,2.5) {2};
						\node at (3.5,2.5) {1};
						
						\node at (0.5,1.5) {3};
						\node at (1.5,1.5) {2};
						
						\node at (0.5,0.5) {2};
						\node at (1.5,0.5) {1};
					\end{scope}
				\end{scope}

				\begin{scope}[ultra thick, dashed,  red, ->]
					\draw (3.5,1) -- (5.5,1);
					\draw (9.5,1) -- (11.5,1);
				\end{scope}
			\end{tikzpicture}
	\end{figure}
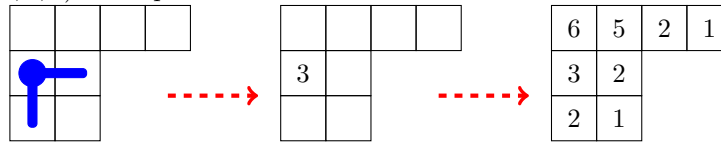

Hook lengths play an important role in the representation theory of the symmetric group.  For example, the Frame-Robinson-Thrall hook-length formula \cite{Frame}, expresses the dimension of the irreducible representation $\pi_\lambda$ of $S_n$ corresponding to a partition $\lambda$ of $n$:
	$$\dim \pi_\lambda = \frac{n!}{\prod_{h \in \cc{H}(\lambda)} h}.$$
A partition $\lambda$ of $n$ with no hook lengths divisible by $a$ is called an \emph{$a$-core partition} or more simply, an \emph{$a$-core}.  When $a$ is prime the corresponding irreducible representations have maximal $a$-adic valuation and play a role in the modular representation theory of $S_n$ \cite{Granville}.

There has been an explosion of recent papers studying enumerative questions about special classes of $a$-core partitions.  The set of $a$-cores is clearly infinite but the number of partitions that are both $a$-cores and $b$-cores, \emph{simultaneous $(a,b)$-cores}, is finite.  Similarly, an $(a_1,a_2,\ldots,a_k)$-core partition is an $a_i$-core for all $i \in [1,k]$.
There is a nice formula due to Anderson for the number of simultaneous $(a,b)$-core partitions that comes from establishing a bijection with a certain set of Dyck paths.

\begin{theorem}[Theorem 1 in \cite{Anderson}] \label{AndersonsTheorem}
	For coprime $a$ and $b$, the number of simultaneous $(a,b)$-core partitions is $\frac{1}{a + b} \binom{a+b}{a}$.
\end{theorem}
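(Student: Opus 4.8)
The plan is to recast simultaneous $(a,b)$-core partitions as lattice paths and count those paths with the cycle lemma, which is the content of Anderson's original argument. First I would encode each partition $\lambda=(\lambda_1\ge\cdots\ge\lambda_k)$ by its bi-infinite \emph{bead set} $M(\lambda)=\{\lambda_i-i : i\ge 1\}\subseteq\Z$ (padding $\lambda$ with zeros), which contains every sufficiently negative integer, no sufficiently large one, and satisfies the balance condition $\#(M(\lambda)\cap\Z_{\ge 0})=\#(\Z_{<0}\sminus M(\lambda))$. Under this encoding the boxes of $\lambda$ correspond bijectively to pairs of a bead $p\in M(\lambda)$ and a hole $q\notin M(\lambda)$ with $q<p$, the hook length of the box being $p-q$. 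The first lemma I would establish is the classical characterization: \emph{$\lambda$ is an $a$-core if and only if $M(\lambda)$ is closed under subtraction by $a$}, i.e.\ $p\in M(\lambda)\Rightarrow p-a\in M(\lambda)$. One direction is immediate, and for the converse one observes that closure under $-a$ forces $p-ja\in M(\lambda)$ for all $j\ge 0$, so no bead can be paired with a hole $p-ja$, killing every hook length divisible by $a$.

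Next I would translate the conditions into order theory. A partition is a simultaneous $(a,b)$-core exactly when $M(\lambda)$ is closed under subtraction by both $a$ and $b$; passing to the holes $H=\Z\sminus M(\lambda)$, this says $H$ is closed under \emph{adding} $a$ and $b$, hence under adding any element of the numerical semigroup $\langle a,b\rangle$. Thus $H$ is an order filter, and $M(\lambda)$ an order ideal, for the partial order $x\preceq y\iff y-x\in\langle a,b\rangle$ on $\Z$. Because $\gcd(a,b)=1$, the semigroup $\langle a,b\rangle$ is cofinite with Frobenius number $ab-a-b$ and has exactly $(a-1)(b-1)/2$ gaps, and I would use this to cut $(\Z,\preceq)$ down to a finite poset $P_{a,b}$ whose order ideals are in bijection with the admissible bead sets $M(\lambda)$. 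The balance condition is what guarantees finiteness and makes this a genuine bijection rather than merely an injection.

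I would then identify the order ideals of $P_{a,b}$ with lattice paths. Laying out the relevant residue representatives so that the two covering relations $+a$ and $+b$ become the two step directions, an order ideal is recorded by a monotone lattice path from $(0,0)$ to $(b,a)$ that stays weakly below the main diagonal of the $a\times b$ rectangle---this is precisely Anderson's bijection between $(a,b)$-cores and below-diagonal Dyck paths. This bijection, together with the finiteness established above, is the step I expect to be the main obstacle: one must check both that every admissible $M(\lambda)$ produces such a path and that the diagonal constraint matches the $a$- and $b$-closure conditions exactly, with no double counting.

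Finally I would count the below-diagonal paths by the cycle lemma. There are $\binom{a+b}{a}$ monotone paths from $(0,0)$ to $(b,a)$ in total. Since $\gcd(a,b)=1$, any period $d\mid(a+b)$ of a step sequence would force $m=(a+b)/d$ to divide both $a$ and $b$, hence $m=1$; so every step sequence is aperiodic and its cyclic-rotation orbit has size exactly $a+b$. The cycle lemma then shows that each such orbit contains exactly one path staying weakly below the diagonal. Dividing the $\binom{a+b}{a}$ sequences into orbits of size $a+b$, I conclude that the number of $(a,b)$-cores is
\[
\frac{1}{a+b}\binom{a+b}{a},
\]
as claimed.
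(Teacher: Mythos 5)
The paper never proves this statement: it is imported verbatim from \cite{Anderson}, and the paper's own machinery touches it only obliquely, remarking at the end of Section \ref{abCorePolytope} that a closed formula for the number of integer points of $\cc P_{a,b}$ in the $m=1$ case would be \emph{equivalent} to Anderson's formula --- a computation it deliberately leaves open. So your proposal is not competing with an in-paper argument; it is a reconstruction of Anderson's original proof, and the parts you actually argue are correct. The bead-set encoding with the balance condition, the boxes-as-bead-hole-pairs fact, and the characterization ``$a$-core iff $M(\lambda)$ is closed under subtracting $a$'' are all standard and correctly justified; your observation that the holes form a filter for $x \preceq y \iff y-x \in \langle a,b\rangle$ is exactly the paper's Proposition \ref{HooksAreComplementOfAtom} (hook set $=$ complement of a numerical semigroup containing $a$ and $b$) in abacus language. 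The closing count is also complete: your aperiodicity argument from $\gcd(a,b)=1$ correctly forces every rotation orbit to have size $a+b$, coprimality also guarantees the diagonal meets no interior lattice points (so ``weakly below'' is unambiguous), and the cycle lemma then yields one good path per orbit, giving $\frac{1}{a+b}\binom{a+b}{a}$. One small imprecision: finiteness comes from the gaps of $\langle a,b\rangle$ being bounded by the Frobenius number $ab-a-b$, not from the balance condition, which only normalizes the charge so that bead sets biject with partitions.

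The one step you flag and do not execute --- the bijection between order ideals of the gap poset and monotone below-diagonal paths --- is genuinely the crux of Anderson's proof, and as written your proposal is a plan there rather than a proof; ``laying out the residue representatives'' does not by itself produce the diagonal constraint. The standard way to close it: label the cell $(i,j)$ of the $a\times b$ rectangle with $ab-a-b-ia-jb$. By the symmetry of $\langle a,b\rangle$, the positive labels are precisely the gaps, each occurring exactly once (representations of integers at most $ab-a-b$ as $ia+jb$ are unique in this range), and they occupy exactly the cells weakly below the diagonal; moving one unit in the two grid directions adds $a$ or $b$, so the covering relations of your poset become grid steps. A down-closed set of gaps is then exactly the set of positive labels enclosed by a monotone lattice path from $(0,0)$ to $(b,a)$ staying weakly below the diagonal, and since an $(a,b)$-core is recovered from the set of gaps in its ideal (its first-column hook lengths), no double counting occurs. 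With that paragraph supplied, your argument is complete and is essentially Anderson's original one --- a genuinely different route from anything in this paper, whose own approach would instead count the lattice points of the polytope $\cc P_{a,b}$ directly.
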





It is natural to ask about the sizes of the partitions making up this finite set.  A formula for the size of the largest simultaneous $(a,b)$-core partition was first given by Olsson and Stanton.
\begin{theorem}[Theorem 4.1 in \cite{Olsson}]\label{knownLargestABcore}
For relatively prime positive integers $a$ and $b$, the largest $(a,b)$-core has size $(a^2-1)(b^2-1)/24$.  Moreover, there is a unique $(a,b)$-core of this size.
\end{theorem}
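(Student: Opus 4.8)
The plan is to route the proof through the correspondence between partitions and numerical sets that the paper is built around, and then to recognize the largest $(a,b)$-core as the partition attached to the semigroup $\langle a,b\rangle=\{xa+yb:x,y\in\Z_{\ge0}\}$ itself. To a partition $\lambda$ I attach the numerical set $S=S(\lambda)$ by reading the boundary of its Young diagram, recording a gap for each vertical step and a non-gap for each horizontal step, normalized so that $0\in S$ and $G:=\Z_{\ge0}\setminus S$ is finite. Two facts drive everything. First, each cell of $\lambda$ corresponds to an inversion $(i,j)$ with $i\in S$, $j\in G$, $i<j$, whose hook length is $j-i$; hence $|\lambda|=\#\{(i,j):i\in S,\ j\in G,\ i<j\}$ and $\cc H(\lambda)=\{\,j-i:i\in S,\ j\in G,\ i<j\,\}$. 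Rewriting each inner count as $\#\{i\in S:i<j\}=j-\#\{g\in G:g<j\}$ gives the clean formula $|\lambda|=\sum_{g\in G}g-\binom{|G|}{2}$, depending only on $G$. Second, since any hook whose length is divisible by $a$ comes from an inversion with $j-i\in a\Z$, one checks that $\lambda$ is an $a$-core if and only if $S$ is closed under adding $a$; thus $\lambda$ is an $(a,b)$-core iff $S$ is closed under adding both $a$ and $b$.

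Because $0\in S$ and $S$ is closed under $+a$ and $+b$, every $(a,b)$-core satisfies $\langle a,b\rangle\subseteq S$, equivalently $G\subseteq G_{\max}$, where $G_{\max}$ is the (finite, since $\gcd(a,b)=1$) set of gaps of $\langle a,b\rangle$. Moreover the admissible gap sets are exactly the order ideals of the poset $P$ on $G_{\max}$ defined by $x\preceq y\iff y-x\in\langle a,b\rangle$, recovering a bijection of Anderson's type. The candidate for the largest core is the one with $S=\langle a,b\rangle$, i.e.\ $G=G_{\max}$ (the full ideal); its first-column hook lengths are precisely the gaps, it is self-conjugate by the symmetry of $\langle a,b\rangle$, and its size is $f(G_{\max})=\sum_{g\in G_{\max}}g-\binom{|G_{\max}|}{2}$. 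For the evaluation I would use the Apéry set $\Ap(\langle a,b\rangle,a)=\{0,b,2b,\dots,(a-1)b\}$: this yields the genus $|G_{\max}|=(a-1)(b-1)/2$ (Sylvester) and, summing the gaps residue class by residue class, $\sum_{g\in G_{\max}}g=\tfrac{1}{12}(a-1)(b-1)(2ab-a-b-1)$. Substituting these into $f(G_{\max})$ is then a routine algebraic identity that collapses to $(a^2-1)(b^2-1)/24$.

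The genuine difficulty, and the step I expect to fight with, is proving that $f$ is maximized, and uniquely so, at the full ideal $G_{\max}$. This is \emph{not} a formal property of $(P,f)$: already for a two-element antichain the full ideal only ties the maximum, so the arithmetic of $\langle a,b\rangle$ must enter. Writing $G_{\max}=G\sqcup F$ with $F$ a nonempty filter of size $s$, a short cancellation turns the claim into the inequality $\sum_{g\in F}g>\tfrac12\,s(2t-s-1)$, where $t=|G_{\max}|$. I would establish this using two structural facts about $\langle a,b\rangle$: it is symmetric with Frobenius number $F^{*}=ab-a-b=2t-1$, and $P$ has a \emph{unique} maximal element $F^{*}$, so that every nonempty filter contains $F^{*}$ and is forced to consist of large gaps. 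Via the symmetry $g\mapsto F^{*}-g$ this reduces to bounding the sum of a size-$s$ order ideal among the semigroup elements of $[0,F^{*}]$ against $st+\binom{s}{2}$; making that bound strict for every nonempty $F$ is the crux, and once it holds it delivers both maximality and uniqueness simultaneously.
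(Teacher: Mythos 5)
You should know at the outset that the paper does not prove this theorem at all: it is quoted from Olsson and Stanton (Theorem 4.1 in \cite{Olsson}), with alternative proofs attributed to Tripathi \cite{Tripathi} and Johnson \cite{Johnson}, so there is no internal proof to compare against. Judged on its own, your setup is correct and is exactly the paper's machinery: your inversion description of hooks is Proposition \ref{HooksAreDifferences}, the closure criterion is Proposition \ref{HooksAreComplementOfAtom}, the formula $|\lambda| = \sum_{g \in G} g - \binom{|G|}{2}$ appears in the paper's proof of the size formula $F_a$, and the identification of admissible gap sets with order ideals of the poset $(G_{\max}, \preceq)$ is the Anderson-type bijection. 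Your evaluation at $G = G_{\max}$ is also right: with $t = (a-1)(b-1)/2$ and $\sum_{g \in G_{\max}} g = \tfrac{1}{12}(a-1)(b-1)(2ab-a-b-1)$, the quantity $\sum g - \binom{t}{2}$ does collapse to $(a^2-1)(b^2-1)/24$, and your reduction of maximality-plus-uniqueness to the strict inequality $\sum_{g \in F} g > \tfrac{1}{2}s(2t-s-1)$ for every nonempty filter $F$ of size $s$ is an accurate bookkeeping computation.

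The genuine gap is that this inequality --- which is the entire content of the theorem, since it is equivalent to the statement being proved --- is never established, as you yourself concede ("the crux"). Worse, the attack you gesture at, that every nonempty filter contains $F^{*}$ and "is forced to consist of large gaps," cannot work in the pointwise form it suggests. Via your duality $g \mapsto F^{*}-g$, a per-element bound would require the $j$-th smallest element $h_j$ of any ideal of semigroup elements in $[0,F^{*}]$ to satisfy $h_j \leq t + j - 1$. This is false: for $(a,b) = (5,6)$ one has $F^{*} = 19$ and $t = 10$, and the ideal generated by $18 \in \langle 5,6 \rangle$ is $\{0,6,12,18\}$, whose fourth element $18$ exceeds $t + 3 = 13$; dually the filter $\{1,7,13,19\}$ contains the very small gap $1$. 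The sum inequality does still hold in this example ($36 < 4t + \binom{4}{2} = 46$), but the example shows that no bound on individual filter elements suffices and a genuinely global argument is required --- this is precisely where the cited proofs invest their effort (Johnson via convexity of the quadratic size function on the core simplex, Tripathi and Olsson--Stanton via different arithmetic of $\langle a,b\rangle$). As written, your proposal proves only the conditional statement that \emph{if} the full ideal is the unique maximizer, \emph{then} the maximum size is $(a^2-1)(b^2-1)/24$; the maximization itself, including uniqueness, remains open in your write-up.
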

\noindent More recently, different proofs have been given by Tripathi \cite{Tripathi} and Johnson \cite{Johnson}.



In 2011 Armstrong gave a conjecture for the average size of an $(a,b)$-core partition that has a simple relation to the maximum size \cite{Armstrong}.  This conjecture was proven in the special case where $b = a+1$ by Stanley and Zanello \cite{Stanley} and more generally when $b \equiv 1 \pmod{a}$ by Aggarwal \cite{Aggarwal}.  The \emph{conjugate} of a partition $\lambda = (\lambda_1,\ldots, \lambda_k)$ is the partition $\widetilde\lambda = (\lambda_1',\ldots, \lambda_\ell')$ where $\lambda_j'$ is the number of parts of $\lambda \ge j$.  This is the partition we get by exchanging the rows and columns of the Young diagram of $\lambda$.  A partition is \emph{self-conjugate} if it is equal to its conjugate.  Armstrong's conjecture was proven for self-conjugate $(a,b)$-cores by Chen, Huang, and Wang \cite{Chen}.  After these partial results, the full theorem was proven by Johnson \cite{Johnson}.  Another proof was recently given by Wang \cite{Wang}.

\begin{theorem}[Theorem 1.7 in \cite{Johnson}] \label{ArmstrongsConj}
For relatively prime positive integers $a$ and $b$, the average size of an $(a,b)$-core partition is $(a + b + 1)(a - 1)(b - 1)/24$.
\end{theorem}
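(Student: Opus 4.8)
The plan is to convert Armstrong's average into a normalized sum of an explicit quadratic statistic over the lattice points produced by our correspondence, and then to evaluate the resulting moments. Using the numerical-set correspondence together with the bijection established above between simultaneous $(a,b)$-cores and the integer points of a polytope, I would pass to the $a$-abacus: every $a$-core is encoded by a charge vector $\mathbf c=(c_0,\dots,c_{a-1})\in\Z^a$ lying on the hyperplane $\Lambda=\{\,\mathbf c:\sum_r c_r=0\,\}\cong\Z^{a-1}$, and the size of the corresponding partition is the quadratic function
$$|\lambda| \;=\; \frac{a}{2}\sum_{r=0}^{a-1} c_r^2 \;+\; \sum_{r=0}^{a-1} r\,c_r .$$
The additional requirement that $\lambda$ also be a $b$-core amounts to a finite list of linear inequalities in $\mathbf c$, so the simultaneous $(a,b)$-cores are exactly the points of $\mathcal{P}\cap\Lambda$ for a polytope $\mathcal{P}$, and by Anderson's Theorem~\ref{AndersonsTheorem} there are $N=\tfrac{1}{a+b}\binom{a+b}{a}$ of them.

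With this set-up the average size becomes
$$\frac{1}{N}\left(\frac{a}{2}\sum_{r=0}^{a-1}\Sigma_2^{(r)} \;+\; \sum_{r=0}^{a-1} r\,\Sigma_1^{(r)}\right), \qquad \Sigma_1^{(r)}=\!\!\sum_{\mathbf c\in\mathcal{P}\cap\Lambda}\!\! c_r,\quad \Sigma_2^{(r)}=\!\!\sum_{\mathbf c\in\mathcal{P}\cap\Lambda}\!\! c_r^2 ,$$
so the entire problem reduces to evaluating the first moments $\Sigma_1^{(r)}$ and the second moments $\Sigma_2^{(r)}$ of the coordinate functions over $\mathcal{P}\cap\Lambda$. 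Before computing anything I would exploit conjugation $\lambda\mapsto\widetilde\lambda$, which is a symmetry of the set of $(a,b)$-cores and acts on charges by $c_r\mapsto -c_{a-1-r}$; this forces the relations $\Sigma_1^{(r)}=-\Sigma_1^{(a-1-r)}$ and $\Sigma_2^{(r)}=\Sigma_2^{(a-1-r)}$, cutting the bookkeeping roughly in half and making the parity of the final answer transparent.

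The main obstacle is the second moment $\sum_r\Sigma_2^{(r)}$: unlike a linear functional it is not determined by the centroid or by the affine symmetries of $\mathcal{P}$, so it genuinely requires understanding how the lattice points of $\mathcal{P}\cap\Lambda$ are distributed. Here I would bring in Ehrhart theory. Writing $\Sigma_2^{(r)}$ as a weighted lattice-point count and applying Ehrhart--Macdonald reciprocity to the dilates $t\mathcal{P}$ expresses it in terms of the leading Ehrhart coefficients --- equivalently the volume and the facet data --- of the (simplex-like) polytope $\mathcal{P}$, which are explicit because of its simple structure. Carrying out this evaluation and then simplifying the resulting polynomial in $a$ and $b$ is where the real work lies; the target identity is that the whole expression collapses to $(a-1)(b-1)(a+b+1)/24$.

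Finally, I would use the two extreme pieces of information already available as both a guide and a check. Evaluating the statistic at the distinguished vertex of $\mathcal{P}$ must reproduce the Olsson--Stanton maximum $(a^2-1)(b^2-1)/24$ of Theorem~\ref{knownLargestABcore}, and summing the constant weight $1$ must reproduce Anderson's count $N$; together with the conjugation relations above these strongly constrain the closed forms of the moments and give a concrete way to detect arithmetic errors in the Ehrhart computation. The delicate point throughout is keeping the $\tfrac{a}{2}\sum c_r^2$ term under control, since it is precisely the quadratic contribution that the affine geometry does not see and that must be supplied by the lattice-point distribution of $\mathcal{P}$.
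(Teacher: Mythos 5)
First, note that the paper does not prove this statement at all: Theorem \ref{ArmstrongsConj} is quoted verbatim from Johnson (Theorem 1.7 of \cite{Johnson}), and the introduction only summarizes his method, so there is no in-paper argument to compare against. Your outline faithfully reproduces the shape of Johnson's strategy---signed-abacus charge coordinates on the lattice $A_{a-1}$, the size of $\lambda$ as an explicit quadratic form $\frac{a}{2}\sum_r c_r^2 + \sum_r r\,c_r$, conjugation symmetry to halve the bookkeeping---but it is a plan rather than a proof. The entire content of Armstrong's conjecture is the evaluation of the moments $\Sigma_1^{(r)}$ and $\Sigma_2^{(r)}$ over the lattice points of the $(a,b)$-core simplex, and you explicitly defer that evaluation (``where the real work lies''). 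The reduction up to that point is routine; nothing that remains is.

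Moreover, the one concrete tool you name for the hard step would not work as described. The weighted count $\sum_{\mathbf c\in t\mathcal{P}\cap\Lambda} c_r^2$ is an Ehrhart quasipolynomial in $t$ of degree $\dim\mathcal{P}+2$; Ehrhart--Macdonald reciprocity relates its values at $t$ and $-t$ but does not determine it ``in terms of the leading Ehrhart coefficients --- equivalently the volume and the facet data.'' At $t=1$ every lower-order coefficient contributes, and for a rational simplex with nontrivial vertex denominators those lower coefficients are precisely the inaccessible part. Johnson circumvents this not by brute-force Ehrhart computation but by exhibiting a lattice-point-preserving affine equivalence between the $(a,b)$-core simplex and a standard dilated alcove, together with an averaging symmetry (the cyclic $\mathbb{Z}/(a+b)$ action implicit in Anderson's bijection) that makes the average of the quadratic term computable; Thiel and Williams \cite{Thiel} and Ekhad--Zeilberger \cite{Zeilberger} develop the same point for higher moments. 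Your consistency checks (the distinguished vertex recovering the Olsson--Stanton maximum of Theorem \ref{knownLargestABcore}, the constant weight recovering Anderson's count) constrain the answer but cannot certify it. So the approach is plausible and is essentially Johnson's, but as written there is a genuine gap: the second-moment computation that constitutes the theorem is neither carried out nor supplied with a method that would succeed.
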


Johnson's work \cite{Johnson} is of special interest to us because he proves Theorem \ref{ArmstrongsConj} by studying a bijection of $a$-core partitions with the lattice
	$$A_{a-1} =  \left\{(x_1,\ldots,x_a) \in \bb{Z}^a : \sum_{i=1}^a x_i = 0 \right\} $$
under which the simultaneous $(a,b)$-cores correspond to the integer points of a rational simplex. This bijection is given by the `signed abacus construction'.  Under this bijection, the size of a partition is given by a certain quadratic function \cite{Johnson}.  Johnson's works also gives the ability to compute higher moments of the distribution of the sizes of simultaneous $(a,b)$-cores, a problem also addressed in \cite{Zeilberger}.  Thiel and Williams also consider these higher moments and extend this approach to affine Weyl groups \cite{Thiel}.

Our approach is similar to Johnson's in that we study a correspondence between simultaneous $(a,b)$-core partitions and the integer points of a rational polytope, however we do not use the abacus construction.  Instead we study a bijection $\varphi$ between partitions and \emph{numerical sets}, subsets of $\N = \{0,1,2,\ldots\}$ that contain $0$ and have finite complement.  A numerical set that is closed under addition is called a \emph{numerical semigroup}. The bijection is given by considering the \emph{profile} of a partition $\lambda$, the sequence of southmost and eastmost edges of its Young diagram.  These steps are labeled by elements of $\N$ starting with the lower left corner of the Young diagram and moving to the upper right where the vertical steps exactly correspond to the elements of the complement of the associated numerical set.  This bijection is explained in detail in \cite{Keith} and is related to the Dyck path construction in \cite{Bras}.  The number of parts in the partition is equal to the size of the complement of the numerical set, and the hook set can be easily calculated from the numerical set.  Moreover, Keith and Nath use this bijection and basic facts about numerical sets to show the following.
\begin{theorem}[Theorem 1 in \cite{Keith}]\label{KeithNath}
Let $a_1, \ldots, a_k$ be distinct positive integers.  The number of simultaneous $(a_1,\ldots, a_k)$-cores is finite if and only if $\gcd\{a_1,\ldots, a_k\} = 1$.
\end{theorem}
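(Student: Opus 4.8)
The plan is to work entirely through the bijection $\varphi$ between partitions and numerical sets. I would first pin down how the $a$-core condition reads off the numerical set $S = \varphi(\lambda)$, with gap set $G = \N \setminus S$. Using the description of hook lengths in terms of $S$, the hook multiset of $\lambda$ is exactly $\{g - s : g \in G,\ s \in S,\ s < g\}$. Hence $\lambda$ has a hook length divisible by $a$ precisely when there exist $g \in G$ and $s \in S$ with $g > s$ and $g \equiv s \pmod a$, and a short telescoping argument shows this fails exactly when $G$ is closed under subtraction by $a$; equivalently, $\lambda$ is an $a$-core iff $S + a \subseteq S$. Consequently $\lambda$ is a simultaneous $(a_1, \ldots, a_k)$-core iff $S + a_i \subseteq S$ for every $i$, i.e. iff $S$ is closed under adding every element of the additive submonoid $T = \langle a_1, \ldots, a_k \rangle = \{\sum_i n_i a_i : n_i \in \N\}$.

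For the direction $\gcd = 1 \Rightarrow$ finiteness: since $\gcd\{a_1, \ldots, a_k\} = 1$, the monoid $T$ is a numerical semigroup, so its complement $\N \setminus T$ is finite (the standard Frobenius/Chicken McNugget fact). Because $0 \in S$ and $S$ is closed under adding elements of $T$, we get $T \subseteq S$, hence $G = \N \setminus S \subseteq \N \setminus T$. Thus the gap set of every simultaneous core is a subset of one fixed finite set, so there are at most $2^{|\N \setminus T|}$ of them---finitely many.

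For the converse, set $d = \gcd\{a_1, \ldots, a_k\} > 1$ and write $a_i = d b_i$. I would exhibit an explicit infinite family. For each $m \geq 1$ let $G_m = \{1, 1 + d, \ldots, 1 + (m-1)d\}$ be the first $m$ elements of the residue class $1 \bmod d$, and set $S_m = \N \setminus G_m$. Since $d \geq 2$ we have $0 \notin G_m$, so $S_m$ is a genuine numerical set. Because each $a_i \equiv 0 \pmod d$, subtracting $a_i$ preserves the class $1 \bmod d$, and a one-line check (if $1 + jd \ge a_i$ then $j \ge b_i$, so $(1+jd) - a_i = 1 + (j - b_i)d \in G_m$) shows $G_m$ is closed under subtraction by each $a_i$. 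Hence every $S_m$ corresponds under $\varphi$ to a simultaneous $(a_1, \ldots, a_k)$-core, and these partitions have $|G_m| = m$ parts, so they are pairwise distinct; letting $m \to \infty$ gives infinitely many.

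I expect the only real content to be the first paragraph---getting the hook-length-to-numerical-set dictionary exactly right and deducing the clean reformulation ``$a$-core $\iff S + a \subseteq S$.'' Once cores are identified with the $T$-closed numerical sets, both directions are short: finiteness is immediate from the cofiniteness of the numerical semigroup $T$, and the failure of finiteness when $d > 1$ follows from the explicit family above (any single nonzero residue class mod $d$ works equally well).
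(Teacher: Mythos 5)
Your proof is correct, but it takes a genuinely different route from the paper. The paper derives Theorem \ref{KeithNath} as a byproduct of Proposition \ref{multicoreBijection}: it passes through the Ap\'ery-tuple bijection between $a$-cores and $\N^{a-1}$, encodes each additional core condition $b_j \in A(T)$ as the linear inequalities (\ref{Pab1})--(\ref{Pab3}), and proves boundedness of the resulting polyhedron by chaining those inequalities---using that when the gcd is $1$ every residue class mod $a$ is a nonnegative combination of the $\ell_j$---to bound each coordinate $x_i$. You bypass Ap\'ery coordinates and polytopes entirely: once a partition is known to be a simultaneous core iff its numerical set $S$ satisfies $S + a_i \subseteq S$ for all $i$ (your telescoping argument essentially re-proves Proposition \ref{HooksAreComplementOfAtom} and the remark after it), your key observation is that $0 \in S$ forces $T = \langle a_1, \ldots, a_k \rangle \subseteq S$, so the gap set of every simultaneous core sits inside the finite complement of the numerical semigroup $T$, giving the crude but sufficient bound $2^{|\N \sminus T|}$. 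This is shorter, more elementary, symmetric in the $a_i$ (the paper must single out one modulus $a$ and write each $b_j = ak_j + \ell_j$ with $a \nmid b_j$), and---importantly---you actually prove the ``only if'' direction via the explicit family $S_m = \N \sminus \{1,\ 1+d,\ \ldots,\ 1+(m-1)d\}$, whose verification is sound (since $1 + jd \not\equiv 0 \pmod d$, the inequality $1+jd \ge a_i$ does force $j \ge b_i$, and the resulting partitions have $m$ parts, hence are pairwise distinct); the paper never spells out this direction, only asserting without proof that $\cc P_{a,b}$ is unbounded when $\gcd(a,b) > 1$. What the paper's heavier route buys is the stronger structural statement that simultaneous cores are exactly the lattice points of an explicit rational polytope, which it needs for the counting results later on; your argument yields finiteness but no such parametrization. (One cosmetic slip: in the paper's convention $\varphi$ maps numerical sets to partitions, so the numerical set attached to $\lambda$ is $\varphi^{-1}(\lambda)$, not $\varphi(\lambda)$.)
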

Another proof of this result is given in \cite{Xiong1}.  In Section \ref{abCorePolytope} we show something stronger, that when $\gcd\{a_1,\ldots, a_k\} = 1$ the bijection $\varphi$ takes the set of simultaneous $(a_1,\ldots, a_k)$-cores to the lattice points of a rational polytope whose defining half-spaces we explicitly describe. In general it is still an open problem to give formulas for the number of such points in terms of $a_1,\ldots, a_k$.  The particular cases of $(s,s+1,s+2)$-cores and $(s,s+1,\ldots, s+k)$-cores have been addressed in \cite{Yang} and \cite{Amdeberhan}, respectively.


We use results of Marzuola and Miller about the \emph{atom monoid} associated to a numerical set \cite{Marzuola} along with the Kunz coordinate vector of a numerical semigroup, described by Blanco and Puerto in \cite{Blanco}, to give further bijections involving $a$-cores.  The atom monoid of a numerical set $T$ is defined by
	$$A(T) = \{n \in \bb{N} : n + T \subseteq T\}.$$
Note that $A(T) \subseteq T$ since $0 \in T$, and $A(T) = T$ if and only if $T$ is a numerical semigroup. The atom monoid is always closed under addition, so in some sense $A(T)$ is the underlying numerical semigroup of $T$. For a numerical semigroup $S$ containing $a$, the associated \emph{Ap\'ery tuple} is $\Ap(S) = (x_1,\ldots,x_{a-1}) \in \N^{a-1}$, where $ax_i + i$ is the smallest element of the numerical semigroup congruent to $i$ mod $a$. The Ap\'ery set of $S$ is $\{0,ax_1 + 1,\ldots, a x_{a-1} + a-1\}$. The definition of $\Ap(S)$ depends on $a$, but the specific value of $a$ we choose will always be clear from context.  We can directly extend this definition to numerical sets $T$ with $a \in A(T)$.  We summarize our bijections as a proposition that we prove in Section \ref{Correspondence}.
\begin{proposition}\label{bijections}
The bijection $\varphi$ described above gives a bijection between the set of $a$-core partitions and the set of numerical sets $T$ with $a \in A(T)$.  The map taking a numerical set $T$ with $a\in A(T)$ to its Ap\'ery tuple gives a bijection between these numerical sets and $\N^{a-1}$.
\end{proposition}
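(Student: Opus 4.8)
The plan is to reduce both bijections to two facts: a combinatorial description of hook lengths in terms of the numerical set $T = \varphi(\lambda)$, and the closure of $A(T)$ under addition. First I would record the hook-length formula underlying the description of $\varphi$ in \cite{Keith}: writing $G = \N \setminus T$ for the set of gaps of $T$ (which are exactly the first-column hook lengths of $\lambda$), the boxes of $\lambda$ are in bijection with the pairs $(g,t)$ satisfying $g \in G$, $t \in T$, and $t < g$, where the box corresponding to $(g,t)$ has hook length $g - t$. In particular, a positive integer $d$ satisfies $d \in \cc{H}(\lambda)$ if and only if there exists $t \in T$ with $t + d \notin T$. I would sanity-check this on the running example $(4,2,2)$, whose numerical set is $T = \{0,1,4,5,7,8,\dots\}$ with gaps $\{2,3,6\}$, recovering both the hook multiset and the count of eight boxes.

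The heart of the first bijection is then the contrapositive of the displayed criterion: a positive integer $d$ fails to be a hook length of $\lambda$ exactly when $d + T \subseteq T$, that is, exactly when $d \in A(T)$. Here I would invoke that $A(T)$ is closed under addition. If $a \in A(T)$, then every positive multiple $ka$ lies in $A(T)$ as well, so no multiple of $a$ is a hook length and $\lambda$ is an $a$-core; conversely, if $\lambda$ is an $a$-core, then in particular $a \notin \cc{H}(\lambda)$, whence $a \in A(T)$. Since $\varphi$ is already a bijection from all partitions to all numerical sets, and the property ``$\lambda$ is an $a$-core'' corresponds under $\varphi$ precisely to ``$a \in A(T)$'', the map $\varphi$ restricts to the claimed bijection.

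For the second bijection I would exploit the same closure property from the other direction. If $a \in A(T)$, then $t \in T$ forces $t + a \in T$, so within each residue class modulo $a$ the set $T$ consists of all integers at least as large as its minimal element in that class. Hence $T$ is determined by the minimal elements $a x_i + i$ for $1 \le i \le a-1$ in the nonzero classes—the class $0$ always having minimal element $0$—so the Ap\'ery tuple $\Ap(T) = (x_1,\dots,x_{a-1})$ determines $T$, giving injectivity. For surjectivity, given any $(x_1,\dots,x_{a-1}) \in \N^{a-1}$ I would set
$$T = a\N \cup \bigcup_{i=1}^{a-1}\{\, a x_i + i + ka : k \ge 0 \,\},$$
and check directly that $T$ contains $0$, has finite complement of size $\sum_i x_i$ (so it is a numerical set), satisfies $a + T \subseteq T$ (so $a \in A(T)$), and has Ap\'ery tuple exactly $(x_1,\dots,x_{a-1})$.

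The step I expect to require the most care is the hook-length description itself: matching an abstract hook of $\lambda$ to the correct pair $(g,t)$ along the profile path, and verifying both the inequality $t < g$ and the identity ``hook length $= g - t$'' against the labeling convention of $\varphi$. Once this combinatorial dictionary is pinned down, the remaining arguments are essentially formal, driven entirely by the monoid structure of $A(T)$.
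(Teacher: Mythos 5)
Your proposal is correct and follows essentially the same route as the paper: the hook-box correspondence $(g,t)\mapsto g-t$ is the paper's Proposition \ref{HooksAreDifferences}, the criterion $H(\varphi(T)) = \N \sminus A(T)$ together with closure of $A(T)$ under addition is Proposition \ref{HooksAreComplementOfAtom} and the remark following it, and your injectivity/surjectivity argument via minimal elements in residue classes modulo $a$ is exactly the paper's treatment in Section \ref{Correspondence}. The only (welcome) refinement is that your explicit set $T = a\N \cup \bigcup_{i=1}^{a-1}\{a x_i + i + ka : k \ge 0\}$ correctly includes the multiples of $a$, which the paper's displayed formula omits.
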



We then use these correspondences to answer enumerative questions about hook sets of partitions.  For example, we give another proof of the following result of Berg and Vazirani.

\begin{proposition}[Proposition 3.1.4 in \cite{Berg}]\label{BergVazirani}
	The number of $a$-cores with $g$ parts is equal to the number of $(a-1)$-cores with less than or equal to $g$ parts.
\end{proposition}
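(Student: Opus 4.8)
The plan is to use the two bijections of Proposition \ref{bijections} to reduce each side of the claimed identity to a lattice-point count in a region of $\N^{a-1}$ (respectively $\N^{a-2}$), and then to exhibit an elementary bijection between those two sets of lattice points.

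First I would translate the left-hand side. By Proposition \ref{bijections}, the $a$-cores correspond to numerical sets $T$ with $a \in A(T)$, and such $T$ correspond bijectively to their Apéry tuples $\Ap(T) = (x_1, \ldots, x_{a-1}) \in \N^{a-1}$. The key step is to show that under $\varphi$ the number of parts of the partition equals $\sum_{i=1}^{a-1} x_i$. Recall from the description of $\varphi$ that the number of parts equals $|\N \sminus T|$, the number of gaps of $T$. Since $a \in A(T)$, whenever $t \in T$ we have $t + a \in T$, so each residue class $i \bmod a$ contains a smallest element of $T$, namely $a x_i + i$ (with the convention $x_0 = 0$). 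The gaps in class $i$ are then exactly $i, i+a, \ldots, i + (x_i - 1)a$, of which there are $x_i$. Summing over $i$ gives $|\N \sminus T| = \sum_{i=1}^{a-1} x_i$. Hence the number of $a$-cores with $g$ parts equals the number of $(x_1, \ldots, x_{a-1}) \in \N^{a-1}$ with $x_1 + \cdots + x_{a-1} = g$.

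Applying the same reasoning with $a$ replaced by $a-1$, the number of $(a-1)$-cores with at most $g$ parts equals the number of $(y_1, \ldots, y_{a-2}) \in \N^{a-2}$ with $y_1 + \cdots + y_{a-2} \le g$. I would then conclude by the standard slack-variable bijection: the map $(y_1, \ldots, y_{a-2}) \mapsto \bigl(y_1, \ldots, y_{a-2},\, g - (y_1 + \cdots + y_{a-2})\bigr)$ is a bijection from tuples in $\N^{a-2}$ of sum at most $g$ to tuples in $\N^{a-1}$ of sum exactly $g$. Both counts equal $\binom{g+a-2}{a-2}$, but the bijection makes the equality transparent without evaluating the binomial coefficient.

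The only real content is the key step identifying the number of parts with the sum of the Apéry tuple; once the gap-counting within each residue class is set up, the rest is bookkeeping. I expect the main obstacle to be purely notational---keeping the indexing of residue classes and the convention $x_0 = 0$ straight---rather than anything substantive.
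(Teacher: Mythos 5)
Your proposal is correct and matches the paper's argument: the paper likewise uses Proposition \ref{bijections} to identify the number of parts with the genus $x_1 + \cdots + x_{a-1}$ of the Ap\'ery tuple, counts $a$-cores with $g$ parts as lattice points of the simplex $\sum x_i = g$ in $\N^{a-1}$ (Proposition \ref{bergProp1}), counts $(a-1)$-cores with at most $g$ parts via the same slack-variable trick you use (Proposition \ref{bergProp2}), and combines the two counts, both equal to $\binom{g+a-2}{a-2}$. Your only cosmetic deviation is making the slack-variable bijection explicit rather than evaluating both binomial coefficients, which changes nothing of substance.
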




Both Johnson and Chen, Huang, and Wang give proofs that the self-conjugate $(a,b)$-core partitions have the same average size as the set of all $(a,b)$-core partitions \cite{Chen, Johnson}.  It is natural to ask whether sizes of other subfamilies of $(a,b)$-cores have similar statistical properties.  We focus on two particular cases; $(a,b)$-cores that correspond to numerical semigroups under the map $\varphi$, and the set of all partitions with a given hook set.  Computational evidence suggests that the average size of an $(a,b)$-core corresponding to a numerical semigroup is not equal to the average size of all $(a,b)$-cores.  

In this setting we do not even have an analogue of Anderson's theorem on the number of these partitions.  This is equivalent to asking for the number of semigroups containing $a$ and $b$.  For a set of nonnegative integers $n_1,\ldots, n_t$ we define the numerical semigroup \emph{generated by} them to be 
\[
\langle n_1,\ldots, n_t \rangle = \left\{\sum_{i=1}^t a_i n_i\ |\ a_i \in \N\right\}.
\]
Note that any semigroup containing $a$ and $b$ also contains $\langle a,b\rangle$.  A semigroup $T$ containing a numerical semigroup $S$ is called an \emph{oversemigroup} of $S$.  Let $O(S)$ denote the number of oversemigroups of $S$.  Using a characterization due to Branco, Garc\'ia-Garc\'ia, Garc\'ia-S\'anchez,and Rosales for when an Ap\'ery tuple corresponds to a numerical semigroup we show that $O(\langle a,b\rangle)$ is equal to the number of lattice points in a certain rational polytope \cite{Branco}.  Hellus and Waldi have also studied this problem, giving formulas for small $a$ and bounds for the general case \cite{Hellus}.  We state their main result as Theorem \ref{HellusTheorem}.



We give our own calculations for $a \le 4$ using using different methods.
\begin{theorem} \label{overs3}
	If $S = \langle 3,\ 6k+\ell \rangle$ with $\ell \in \{1,2,4,5\}$, then $O(S) = (3k+\ell)(k+1)$.
\end{theorem}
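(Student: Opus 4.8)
The plan is to count oversemigroups of $S = \langle 3, 6k+\ell \rangle$ by using Proposition \ref{bijections} with $a = 3$, which identifies numerical semigroups containing $3$ with a subset of $\N^{2}$ via the Ap\'ery tuple $\Ap(T) = (x_1, x_2)$, where $3x_i + i$ is the smallest element of $T$ congruent to $i \pmod 3$. First I would translate the condition that a numerical set with $3 \in A(T)$ is actually a semigroup into inequalities on $(x_1,x_2)$. Since $a=3$ is small, the Kunz/semigroup conditions of Branco, Garc\'ia-Garc\'ia, Garc\'ia-S\'anchez, and Rosales reduce to a short list: closure under addition for the residues $1$ and $2$ modulo $3$ forces $2(3x_1+1) \ge 3x_2 + 2$ and $2(3x_2 + 2) \ge 3x_1 + 1$, i.e. essentially $x_2 \le 2x_1$ and $x_1 \le 2x_2 + 1$. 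I expect these two inequalities to cut out exactly the lattice points in $\N^2$ corresponding to semigroups containing $3$, as in Proposition \ref{bijections} specialized to $a=3$.

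Next I would impose the constraint that the semigroup \emph{contains} $6k+\ell$, i.e. that $T$ is an oversemigroup of $\langle 3, 6k+\ell\rangle$ rather than merely of $\langle 3 \rangle$. Since $T \supseteq \langle 3 \rangle$ automatically once $3 \in T$, being an oversemigroup of $S$ is equivalent to the single membership condition $6k+\ell \in T$. Writing $6k + \ell = 3(2k) + \ell$ when $\ell \in \{1,2\}$ and $6k+\ell = 3(2k+1) + (\ell - 3)$ when $\ell \in \{4,5\}$, membership says that the relevant Ap\'ery coordinate is at most $2k$ (respectively $2k+1$). Concretely, for $\ell \in \{1,2\}$ the condition is $x_{\ell} \le 2k$, and for $\ell \in \{4,5\}$ the condition is $x_{\ell - 3} \le 2k+1$; I would verify this bookkeeping carefully, since it is where the four residue classes split into two pairs.

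Combining the semigroup inequalities with the membership bound, $O(S)$ becomes the number of lattice points $(x_1,x_2) \in \N^2$ satisfying $x_2 \le 2x_1$, $x_1 \le 2x_2 + 1$, and a bound of the form $x_i \le 2k$ or $2k+1$ on one coordinate. I would then simply enumerate these points: for each admissible value of the bounded coordinate, the pair of linear inequalities $x_2 \le 2x_1$ and $x_1 \le 2x_2+1$ pins the other coordinate to an interval whose integer count is an explicit linear function, and summing this arithmetic-progression count over the bounded range should collapse to the claimed product $(3k+\ell)(k+1)$. The main obstacle I anticipate is not any single hard idea but getting the boundary cases exactly right: verifying that the Branco et al.\ criterion really reduces to the two inequalities above for $a=3$ (including whether the inequalities are strict), and checking that the four values $\ell \in \{1,2,4,5\}$ all produce the same closed form despite entering through different coordinates and different bounds. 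I would confirm the final formula against small values of $k$ by direct computation to catch any off-by-one errors in the lattice-point count.
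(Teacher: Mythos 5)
Your proposal is correct, and it takes a genuinely different route from the paper. Both arguments ultimately count lattice points of the same planar region: your two Kunz inequalities $2x_1 \ge x_2$ and $2x_2 + 1 \ge x_1$ are exactly the specialization of (\ref{ineq-semi1})--(\ref{ineq-semi2}) to $a=3$ (they reappear in the paper's proof of Theorem \ref{3coreConj}), and your single membership bound $x_\ell \le 2k$ (resp.\ $x_{\ell-3} \le 2k+1$) is a streamlined version of the containment condition (\ref{ineq-over1}) --- you correctly observe that for a semigroup $T$ containing $3$, the condition $T \supseteq \langle 3, 6k+\ell\rangle$ reduces to $6k+\ell \in T$, and the bound on the other Ap\'ery coordinate is then implied by closure under addition, which is why the paper's $\langle 3,8\rangle$ example carries both bounds while you need only one. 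The difference is in the decomposition: the paper slices the polytope by \emph{genus}, i.e.\ along the anti-diagonals $x_1 + x_2 = \text{const}$, which requires the auxiliary Lemma \ref{overs3byG} with its case analysis of Ap\'ery sets of the form $\{0,\ 6k+\ell - 3p,\ 12k+2\ell-3(m-p)\}$ and floor-function counts in each genus class; you slice by a single coordinate, so that for each value $t$ of the bounded coordinate the other coordinate ranges over an explicit integer interval (e.g.\ for $\ell=1$, fixing $x_1 = t$ gives $\lceil (t-1)/2 \rceil \le x_2 \le 2t$, hence $3s+1$ points for $t=2s$ and $3s+3$ points for $t=2s+1$, summing to $3k^2+4k+1 = (3k+1)(k+1)$, and the other three residues work out the same way). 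Your slicing is the more direct path to the closed form and avoids the lemma entirely; what the paper's genus decomposition buys in exchange is the finer statistic $O_n(S)$ --- the distribution of oversemigroups by genus, i.e.\ of the corresponding $(3,b)$-core partitions by number of parts --- which your vertical slices do not expose. Your anticipated pitfalls are the right ones, but both check out: the Branco et al.\ inequalities are non-strict exactly as you wrote them, and the four residues $\ell \in \{1,2,4,5\}$ do collapse to the single product $(3k+\ell)(k+1)$ after the parity bookkeeping.
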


\begin{theorem} \label{overs4}
Suppose that $S = \langle 4, 12k + \ell\rangle$ with $\ell \in \{1,3,5,7,9,11\}$.  Then $O(S)$ is given by the following chart:

	\begin{center}
	\begin{tabular}{c | c}
	$\ell$	&	$O(S)$ \\
	\hline
	1	&	$24k^3 + 30k^2 + 11k + 1$ \\
	3	&	$24k^3 + 42k^2 + 23k + 4$ \\
	5	&	$24k^3 + 54k^2 + 39k + 9$ \\
	7	&	$24k^3 + 66k^2 + 59k + 17$ \\
	9	&	$24k^3 + 78k^2 + 83k + 29$ \\
	11	&	$24k^3 + 90k^2 + 111k + 45$
	\end{tabular}.
	\end{center}
	
\end{theorem}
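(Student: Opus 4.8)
The plan is to translate the count of oversemigroups into a lattice-point count via the Ap\'ery-tuple bijection and the semigroup criterion of \cite{Branco}, and then to evaluate that count directly.

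First I would note that an oversemigroup of $S = \langle 4, b\rangle$, where $b = 12k + \ell$, is exactly a numerical semigroup containing both $4$ and $b$. By Proposition~\ref{bijections}, numerical sets $T$ with $4 \in A(T)$ correspond bijectively to Ap\'ery tuples $(x_1, x_2, x_3) \in \N^3$, and such a tuple comes from a \emph{semigroup} precisely when the associated Ap\'ery elements $w_i = 4x_i + i$ (with $w_0 = 0$) satisfy $w_i + w_j \ge w_{(i+j)\bmod 4}$ for all $i, j$, by the criterion of \cite{Branco}. Expanding these conditions gives the inequalities
\[
2x_1 \ge x_2, \qquad x_1 + x_2 \ge x_3, \qquad x_2 + x_3 + 1 \ge x_1, \qquad 2x_3 + 1 \ge x_2.
\]
Since $12 \equiv 0 \pmod 4$ we have $b \equiv \ell \equiv r \pmod 4$ with $r \in \{1,3\}$, and $S$ meets the class $r + 4\N$ in $w_r + 4\N$; hence $b \in S$ is equivalent to the single bound $4x_r + r \le b$, i.e.\ $x_r \le 3k + \tfrac{\ell - r}{4}$. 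Writing $M = 3k + \tfrac{\ell - r}{4}$, it follows that $O(S)$ is the number of $(x_1, x_2, x_3) \in \N^3$ satisfying the four inequalities above together with $x_r \le M$.

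To evaluate this, I would fix $x_1 = p$ and $x_3 = q$ and count the admissible values of $x_2$. The four inequalities confine $x_2$ to $\max(0,\, q-p,\, p-q-1) \le x_2 \le \min(2p,\, 2q+1)$, so the number of admissible $x_2$ equals $3p - q + 1$ when $p \le q \le 3p$, equals $3q - p + 3$ when $q < p \le 3q + 2$, and is $0$ otherwise. Summing this count over the bounded region---over all $q$ for each $p \le M$ when $r = 1$, and over all $p$ for each $q \le M$ when $r = 3$---expresses $O(S)$ as a function of $M$ that is a quasi-polynomial of degree $3$ and period $3$. Substituting $M = 3k,\ 3k+1,\ 3k+2$ (i.e.\ $\ell \in \{1,5,9\}$ or $\ell \in \{3,7,11\}$ according to $r$) then produces the six cubics in the statement.

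The main obstacle is the period-$3$ bookkeeping in the inner sum: the constraints $q \le 3p$ and $p \le 3q + 2$ make the range of summation depend on $p \bmod 3$ (respectively $q \bmod 3$) through a ceiling, so the sum must be split by residue class and the resulting degree-$2$ quasi-polynomials accumulated carefully. Once a closed form for $O(S)$ as a quasi-polynomial in $M$ is obtained, the six polynomials follow by direct substitution, which can be confirmed against small values such as $k = 0$ and $k = 1$.
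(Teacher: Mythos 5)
Your proposal is correct, and it arrives at the six cubics by a genuinely different slicing of the same polytope. Both you and the paper reduce the problem to counting tuples $(x_1,x_2,x_3)\in\N^3$ satisfying the four semigroup inequalities of Branco et al.\ together with containment constraints, but the paper fixes the \emph{middle} coordinate $n=x_2$ (after noting that slicing by genus fails because the resulting count is not unimodal), and for each $n$ counts pairs $(x,y)$ via a case analysis on $x\le y$ versus $x>y$, split further into the regimes $n\le 2k$ and $n>2k$; it also keeps all three coordinatewise containment bounds $x\le 3k$, $n\le 6k$, $y\le 9k$, works out only $\ell=1$ in detail, and asserts the other five residues are similar. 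You instead fix the outer coordinates $(x_1,x_3)=(p,q)$, so the admissible $x_2$ form a single interval $[\max(0,\,q-p,\,p-q-1),\,\min(2p,\,2q+1)]$ whose length is exactly your piecewise-linear count (I verified it, including the boundary cases), and you exploit the observation that an oversemigroup $T$ of $\langle 4,b\rangle$ is a semigroup containing $4$, so $T\supseteq S$ collapses to the single inequality $x_r\le M$ --- the other two bounds are implied by the semigroup inequalities ($x_2\le 2x_1$, $x_3\le x_1+x_2$ when $r=1$; $x_2\le 2x_3+1$, $x_1\le x_2+x_3+1$ when $r=3$), a one-line remark you should make explicit in a full write-up. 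This buys real efficiency: a single degree-$3$, period-$3$ quasipolynomial in $M$ for each residue $r\in\{1,3\}$, evaluated at $M=3k,3k+1,3k+2$, covers all six values of $\ell$ at once instead of six separate computations. Your parameter values $M=3k+\tfrac{\ell-r}{4}$ are right, and spot checks confirm all six constituents at $k=0$ (values $1,4,9,17,29,45$) and $\ell=1$, $k=1$ (value $66$). One small slip: where you write ``$b\in S$ is equivalent to $4x_r+r\le b$'' you mean $b\in T$, with $(x_1,x_2,x_3)$ the Ap\'ery tuple of $T$. The period-$3$ bookkeeping you flag is the only remaining labor and is routine.
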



It is not difficult using the bijection $\varphi$ to show that the hook set of a partition is always the complement of a numerical semigroup.  Let $\N \sminus S$ denote the complement of some numerical semigroup $S$.  If $a$ and $b$ are not in $\N \sminus S$ then any partition with this hook set is a simultaneous $(a,b)$-core.  In order to study statistical questions about sizes of partitions with a given hook set, we would first like to understand how many partitions have this hook set.  We call this number $P(S)$.  We investigate how the properties of $S$ affect the behavior of this function, giving some results and suggesting questions for future work.  The \emph{Frobenius number} of a numerical set $T$ is the largest element of its complement and is denoted $F(T)$.  The size of the complement is called the \emph{genus} of $T$ and the elements of the complement are called the \emph{gaps} of $T$.  These concepts play important roles in our analysis of this problem.  

 The study of the set of partitions with a given hook set fits in nicely with previous work of Chung and Herman \cite{Chung}, and of Craven \cite{Craven}, on partitions with equal hook multisets.
In \cite{Chung}, the authors show that a partition is uniquely determined up to reflection by its \emph{extended hook multiset}, in which hook lengths can take negative values. However, they also show that arbitrarily many distinct partitions can have the same hook multiset.  This result has been vastly generalized by Craven.

\begin{theorem}[Theorem 1.4 in \cite{Craven}]
Let $k$ and $\ell$ be natural numbers.  The for all sufficiently large $n$, there are $k$ disjoint sets of $\ell$ partitions of $n$, such that all of the $\ell$ partitions in each set have the same multiset of hook numbers, and distinct sets contain partitions with different hook numbers, and moreover different products of hook numbers.
\end{theorem}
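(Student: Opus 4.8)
The plan is to recast everything through the bijection $\varphi$ and to argue entirely with gap sets. If $\lambda \leftrightarrow T=\varphi(\lambda)$ and $U=\N\sminus T$ is the (finite) gap set with $\F(T)=N-1$, set $u(x)=\sum_{i\in U}x^{i}$ and $E_N(x)=1+x+\cdots+x^{N-1}$. A short computation from the profile shows that the generating polynomial $\sum_{h\in\cc H(\lambda)}x^{h}$ is the positive-degree part of $u(x)\bigl(E_N(1/x)-u(1/x)\bigr)$; equivalently, the multiplicity of a hook length $d$ equals $\#\{i\in U:i\ge d\}$ minus the number of pairs $i>i'$ in $U$ with $i-i'=d$ (a ``tail count'' minus the autocorrelation of $U$). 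Thus two partitions share a hook multiset exactly when their gap sets have equal ``tail count minus autocorrelation,'' and the theorem becomes the following: for every large $n$, produce gap sets $U_{i,j}$ ($1\le i\le k$, $1\le j\le \ell$), all of size $n$, so that the $\ell$ sets $U_{i,1},\dots,U_{i,\ell}$ in row $i$ have a common hook polynomial $H_i$, the polynomials $H_1,\dots,H_k$ are pairwise distinct, and the resulting products of hook lengths are pairwise distinct.

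For a single row I would invoke the result of Chung and Herman quoted above: for each $\ell$ there exist $\ell$ distinct partitions with one common hook multiset, i.e.\ a ``base gadget'' $U_{1},\dots,U_{\ell}$ with equal hook polynomial. The two remaining tasks are to manufacture $k$ genuinely different rows and to slide the common size through every sufficiently large value of $n$, and both should come from a single \emph{inflation} operation on gap sets. Because equality of hook polynomials is governed by the autocorrelation term, it is preserved by the standard homometry device: if one multiplies the members of a gadget by a common polynomial factor $b(x)$ (the gap-set analogue of a Minkowski sum with a fixed block $B$), the autocorrelations stay matched, so---modulo the tail term discussed below---the enlarged gap sets keep a common hook polynomial while their size and window grow in a controlled, additive way. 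Varying the block $B$ gives different hook polynomials $H_i$ for different rows, and choosing a one-parameter family of blocks whose sizes cover a complete set of residues lets the total size $n$ run over all large integers.

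Two clean-up steps finish the plan. To force the hook-length \emph{products} to differ across rows, I would arrange the block for row $i$ so that $H_i$ contains a prime $p_i$ exceeding every other hook length occurring in any row; then the products are separated by divisibility by $p_i$, and distinctness of the hook multisets follows a fortiori. Distinctness of the $\ell$ partitions inside a row is inherited from the base gadget, and disjointness of the $k$ rows is immediate once the $H_i$ are pairwise distinct. The main obstacle, and where the real work lies, is the tension between the two terms in the hook formula: the autocorrelation term is translation invariant and behaves well under convolution, but the tail count $\#\{i\in U:i\ge d\}$ is not, so a naive inflation distorts the hook polynomial even when it preserves the autocorrelation. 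Controlling this tail term is the crux; I expect it can be handled by performing the inflation inside a fixed large window and convolving only with palindromic blocks, so that the tail contributions of the two members of a gadget remain equal, or, failing a clean homometry argument, by adapting Chung and Herman's explicit inductive doubling so that it carries the size parameter $n$ directly.
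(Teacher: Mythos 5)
This statement is quoted from Craven (Theorem 1.4 of \cite{Craven}) and the paper contains no proof of it --- the authors only remark that Craven proves it via his \emph{enveloping partitions} --- so your attempt must be judged on its own merits rather than against an internal argument. Your opening reduction is correct and nicely matches the paper's machinery: by Proposition \ref{HooksAreDifferences}, the multiplicity of $d$ as a hook length of $\varphi(T)$ is $\#\{i \in U : i \ge d\} - \#\{(i,i') \in U \times U : i - i' = d\}$, your ``tail count minus autocorrelation'' formula, and the within-row size agreement is automatic since a partition of $n$ has exactly $n$ hooks.

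The genuine gap is the inflation step, and it is not a deferred technicality but a structural failure of the stated mechanism. Observe that two \emph{distinct} gap sets can never have both equal autocorrelations and equal hook multisets: equality of both would force equal tail counts $t_U(d)$ for all $d \ge 1$, and since the indicator of $d \in U$ is $t_U(d) - t_U(d+1)$, the gap sets would coincide. So inside any Chung--Herman gadget the autocorrelations necessarily \emph{differ}, with the difference exactly compensated by the tail difference. Now track both terms under your convolution: writing $A_j(x) = u_j(x)u_j(1/x)$, inflation by a block $b$ sends $A_1 - A_2$ to $(A_1 - A_2)\,b(x)b(1/x)$, whereas (for gadget members of equal cardinality) the tail difference has generating function $x(u_2 - u_1)(x)/(1-x)$ and is sent to $x(u_2-u_1)(x)\,b(x)/(1-x)$ --- multiplication by $b(x)$ alone. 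The base identity equating the two differences is transformed by two different operators, so there is no reason it survives; palindromic blocks constrain $b(x)b(1/x)$ but do not repair the mismatch, and your phrase ``the autocorrelations stay matched'' cannot hold for distinct members in the first place, by the observation above. Since the mechanisms for producing $k$ distinct rows, for sweeping the size through every sufficiently large $n$, and for the large-prime separation of hook products are all built on top of this inflation, the proposal as it stands is a correct reformulation plus a program whose central lemma is missing and, as formulated, provably false; your own fallback --- adapting Chung and Herman's explicit doubling to carry the size parameter --- is closer to what an actual proof (and Craven's enveloping-partition construction) has to do, but it is not carried out here.
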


Craven proves this result by defining certain classes of partitions that he calls \emph{enveloping partitions} that have the same hook multiset as many other partitions.  It is natural to ask what are the properties of the numerical semigroups giving the underlying hook sets of these partitions that make them suitable for this construction.  We focus on the opposite extreme.  A numerical set $T$ is called symmetric if for every $i \in [0,F(T)]$ exactly one of $i,F(T)-i$ is in $T$.  We prove that there is a unique partition with a given hook set if and only if that hook set is the complement of a symmetric numerical semigroup.  We also investigate the relationship between the function $P(S)$ and the number of \emph{missing pairs} of $S$, that is, the number of pairs $i, F(S)-i$ in the complement of $S$ with $i \in [0,F(S)/2]$.

We conclude the paper by discussing some asymptotic questions and conjectures based on computational evidence.





\section{The correspondence between numerical sets and partitions}\label{SetsPartitions}

We first explain the bijection $\varphi$ introduced in the previous section connecting numerical sets to partitions and use it to find the relationship between atom monoids and hook sets.  We begin with an example.



\begin{example}
	Let $T = \{0,1,4,5,7,\to\}$, where ``$\to$'' means that $T$ contains every integer greater than 7, as in the conventions of \cite{Rosales}.
	Clearly $T$ is a numerical set with $F(T) = 6$, $g(T) = 3$, and $A(T) = \{0,4,5,7,\to\}$.
\end{example}

Given a numerical set $T$ we construct a partition $\varphi(T)$ such that the map $\varphi$ is a bijection from numerical sets to partitions.
We construct $\varphi(T)$ by defining the profile of its Young diagram.  We can think of this path as lying in $\Z^2$ with the bottom left corner of the Young diagram at the origin. Starting with $n = 0$:
	\begin{itemize}
		\item if $n \in T$ draw a line of unit length to the right,
		\item if $n \notin T$ draw a line of unit length up,
		\item repeat for $n+1$.
	\end{itemize}
For any $n$ greater than the Frobenius number of $T$ we draw a line to the right.  As $T$ is a numerical set this process ends with an infinite set of steps to the right.  We disregard this section, forming the Young diagram with this profile walk, the line $x=0$, and this horizontal line.  The construction is understood most clearly with an example.


\begin{example}
	If $T = \{0,1,4,5,7,\to\}$, then $\varphi(T) = (4,2,2)$:
	\begin{center}
		\begin{tikzpicture}[scale = 0.6]
		
			\begin{scope}
				\draw[help lines] (-.5,-.5) grid (4.5,3.5);
				\draw[help lines] (5,2.75) -- (5,3.25);
				
				\begin{scope}[blue, ultra thick]
					\filldraw (0,0) circle [radius = 0.1];
					\draw[->, font = \footnotesize]
						(0,0) --
						node [below] {0} ++(1,0) --
						node [below] {1} ++(1,0) -- 
						node [right] {} ++(0,1) --
						node [right] {} ++(0,1) --
						node [below] {4} ++(1,0) --
						node [below] {5} ++(1,0) --
						node [right] {} ++(0,1) --
						node [below] {7} ++(1,0) --
						node [below] {8} ++(1,0);
				\end{scope}
			\end{scope}

			\begin{scope}[xshift = 8cm]
				\draw[help lines] (-.5,-.5) grid (4.5,3.5);
				
				\begin{scope}[blue, ultra thick]
					\filldraw (0,0) circle [radius = 0.1];
					\draw[font = \footnotesize]
						(0,0) --
						node [below] {0} ++(1,0) --
						node [below] {1} ++(1,0) -- 
						node [right] {} ++(0,1) --
						node [right] {} ++(0,1) --
						node [below] {4} ++(1,0) --
						node [below] {5} ++(1,0) --
						node [right] {} ++(0,1) --
						(0,3) -- cycle;
				\end{scope}
			\end{scope}

			\begin{scope}[xshift = 16cm]
				\foreach \x in {0,1,2,3}
					{ \draw (\x,2) rectangle ++(1,1);}
				\foreach \x in {0,1}
					{ \draw (\x,1) rectangle ++(1,1);}
				\foreach \x in {0,1}
					{ \draw (\x,0) rectangle ++(1,1);}
				
				\begin{scope}
					\node at (0.5,2.5) {6};
					\node at (1.5,2.5) {5};
					\node at (2.5,2.5) {2};
					\node at (3.5,2.5) {1};
					
					\node at (0.5,1.5) {3};
					\node at (1.5,1.5) {2};
					
					\node at (0.5,0.5) {2};
					\node at (1.5,0.5) {1};
				\end{scope}
				
				\begin{scope}[blue, ultra thick]
					\filldraw (0,0) circle [radius = 0.1];
					\draw[font = \footnotesize]
						(0,0) --
						node [below] {0} ++(1,0) --
						node [below] {1} ++(1,0) -- 
						node [right] {} ++(0,1) --
						node [right] {} ++(0,1) --
						node [below] {4} ++(1,0) --
						node [below] {5} ++(1,0) --
						node [right] {} ++(0,1.05);
				\end{scope}
			\end{scope}

			\begin{scope}[ultra thick, dashed,  red, ->]
				\draw (5,1) -- (7,1);
				\draw (13,1) -- (15,1);
			\end{scope}
		\end{tikzpicture}
	\end{center}
\end{example}

Seeing that $\varphi$ is a bijection is simple: to find the inverse image of a partition $\lambda$ label the profile of the Young diagram as above, starting with $0$.  The complement of the numerical set $\varphi^{-1}(\lambda)$ consists of the positive integers labeling the vertical steps of the profile.

We give some basic properties of $\varphi$ here, some of which might be evident from the example.  These results are clear from \cite{Keith} but we include them with proofs for completeness.


\begin{proposition} \label{HooksAreDifferences}
	Given a numerical set $T$, the hook multiset of $\varphi(T)$ is 
		$$\cc H(\varphi(T)) = \{n - t : n \notin T, t \in T, n > t\}.$$
\end{proposition}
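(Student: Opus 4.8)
The plan is to identify the boxes of $\varphi(T)$ with the pairs $(t,n)$ appearing on the right-hand side, and then to show that the box indexed by such a pair has hook length exactly $n-t$.

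First I would make precise the dictionary between boxes and labeled boundary steps. In the profile of $\varphi(T)$ the horizontal (rightward) steps are labeled by the elements $t \in T$, and the vertical (upward) steps by the gaps $n \in \N \sminus T$. The vertical steps, listed from the bottom up, index the rows of the Young diagram, and the horizontal steps, listed from left to right, index its columns. A box sits at the intersection of the row belonging to a vertical step $V$ and the column belonging to a horizontal step $H$ precisely when $H$ occurs before $V$ along the profile; writing $n$ for the label of $V$ and $t$ for the label of $H$, this condition reads $t < n$. This sets up a bijection between the boxes of $\varphi(T)$ and the pairs $\{(t,n) : t \in T,\ n \notin T,\ t < n\}$, so it suffices to compute the hook length of the box indexed by such a pair.

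Next I would compute that hook length as arm plus leg plus one. The boxes strictly to the right of the box $(t,n)$ lie in the same row $n$ and in columns indexed by elements $t' \in T$ with $t < t' < n$, so the arm length is $|\{t' \in T : t < t' < n\}|$. Likewise the boxes strictly below it lie in the same column $t$ and in rows indexed by gaps $n' \notin T$ with $t < n' < n$, so the leg length is $|\{n' \notin T : t < n' < n\}|$. The crucial observation is that these two index sets are disjoint and their union is exactly the integer interval $\{t+1,\ldots,n-1\}$, since every integer in this range either belongs to $T$ or is a gap. Hence
$$\mathrm{arm} + \mathrm{leg} = (n-1) - (t+1) + 1 = n - t - 1,$$
and the hook length of the box is $(n-t-1) + 1 = n - t$, as claimed. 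Ranging over all boxes, and using that the correspondence above is a bijection, gives the stated multiset equality.

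I expect the main obstacle to be the first step: carefully justifying the geometric dictionary between boxes, rows, columns, and the labeled profile steps, including the bookkeeping for the infinite rightward tail of $T$ (which contributes no boxes, precisely because a box requires $t < n$ with $n$ a gap, hence $n \le F(T)$). Once that dictionary is in place, the hook-length computation is a one-line counting argument: the arm and leg together tile the integer interval strictly between the column label $t$ and the row label $n$.
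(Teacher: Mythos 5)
Your proof is correct and follows essentially the same route as the paper's: the paper also identifies each box with a pair $(t,n)$, $t \in T$, $n \notin T$, $t < n$, via the labeled profile, and obtains the hook length $n-t$ by ``counting steps along the profile.'' Your write-up simply makes the paper's step-counting explicit---the arm and leg index sets partitioning the interval $\{t+1,\ldots,n-1\}$---which is a welcome elaboration but not a different argument.
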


\begin{proof}
	Consider a box $B$ in the Young diagram of $\varphi(T)$ such that $B$ is in the same column as the horizontal step on the profile associated to $t \in T$ in the construction of $\varphi(T)$, and the same row the vertical step associated to $n \notin T$.
	
	Recall that the hook of $B$ is the set of boxes to the right (the ``arm''), the set of boxes below (the ``leg''), and $B$ itself. Counting steps along the profile shows that $n - t$ is the hook length of $B$.
\end{proof}

\begin{proposition} \label{HooksAreComplementOfAtom}
	Given a numerical set $T$, the hook set of $\varphi(T)$ is the complement of its atom monoid: $H(\varphi(T)) = \N \sminus A(T)$.
\end{proposition}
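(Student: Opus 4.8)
The plan is to show the two sets $H(\varphi(T))$ and $\N \smallsetminus A(T)$ coincide by characterizing each in terms of the numerical set $T$ and its differences. By Proposition \ref{HooksAreDifferences}, an integer $h$ lies in $H(\varphi(T))$ if and only if $h$ is realized as a difference $n - t$ with $n \notin T$, $t \in T$, and $n > t$. On the other side, recall the definition $A(T) = \{m \in \N : m + T \subseteq T\}$, so $m \notin A(T)$ means precisely that $m + T \not\subseteq T$, i.e.\ there exists some $t \in T$ with $m + t \notin T$. My strategy is to prove the logical equivalence of these two membership conditions for each positive integer $h$, and to handle the element $0$ separately since $0 \in A(T)$ always and $0$ is never a hook length.

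First I would establish the easy containment $H(\varphi(T)) \subseteq \N \smallsetminus A(T)$. Suppose $h \in H(\varphi(T))$. By Proposition \ref{HooksAreDifferences} we may write $h = n - t$ with $t \in T$, $n \notin T$, and $n > t$. Setting $m = h = n - t$, we then have $m + t = n \notin T$ while $t \in T$, so $m + T \not\subseteq T$, which gives $m \notin A(T)$; since $m = h$, this shows $h \in \N \smallsetminus A(T)$. Note $h \geq 1$ here because $n > t$, consistent with $0 \in A(T)$.

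For the reverse containment $\N \smallsetminus A(T) \subseteq H(\varphi(T))$, suppose $m \notin A(T)$ with $m \geq 1$. Then there exists $t \in T$ with $m + t \notin T$. Setting $n = m + t$, we have $n \notin T$, $t \in T$, and $n - t = m \geq 1 > 0$, so $n > t$. By Proposition \ref{HooksAreDifferences} the difference $n - t = m$ is a hook length of $\varphi(T)$, hence $m \in H(\varphi(T))$.

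The proof is essentially a direct unwinding of the two definitions, so there is no serious obstacle; the only subtlety worth flagging is the bookkeeping around $0$. One must observe that $0$ belongs to $A(T)$ (since $0 + T = T \subseteq T$) and simultaneously that $0$ is never a hook length (every hook contains at least one box, so every element of $H(\varphi(T))$ is positive, equivalently the differences $n - t$ in Proposition \ref{HooksAreDifferences} satisfy $n > t$). These two facts align perfectly, so $0$ is excluded from $H(\varphi(T))$ and included in $A(T)$ consistently, and the equality $H(\varphi(T)) = \N \smallsetminus A(T)$ holds on all of $\N$. I would present both containments explicitly as above, since each is a one-line translation between the difference description of hooks and the defining condition of the atom monoid.
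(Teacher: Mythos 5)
Your proof is correct and is essentially identical to the paper's: both reduce the statement via Proposition \ref{HooksAreDifferences} to the equality $\N \sminus A(T) = \{n - t : n \notin T,\ t \in T,\ n > t\}$ and verify the two containments by directly unwinding the definition of the atom monoid. Your explicit bookkeeping around $0$ (that $0 \in A(T)$ always, while every difference $n-t$ with $n > t$ is positive) is a point the paper leaves implicit, but it does not change the argument.
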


\begin{proof}
	By Proposition \ref{HooksAreDifferences} this amounts to proving that $\bb{N} \sminus A(T) = D$, where $D = \{n - t : n \notin T, t \in T, n > t\}$.

	Suppose $x \in \bb{N} \sminus A(T)$, so $x + t \notin T$ for some $t \in T$. This implies $(x + t) - t = x \in D$. Conversely, if $x \in D$ then $x = n - t$ for some $n \notin T$ and $t \in T$. This implies $x + t = n \notin T$, so $x \in \bb{N} \sminus A(T)$.
	
\end{proof}

\begin{remark}
	In particular, since $\varphi$ is bijective, Proposition \ref{HooksAreComplementOfAtom} shows that the hook set of any partition is the complement of a numerical semigroup.
	This implies that a partition is an $a$-core if and only if $a$ is not in its hook set, a simpler condition than having no hook lengths divisible by $a$.  
\end{remark}


\section{The correspondence of $a$-cores and $\bb{N}^{a-1}$}\label{Correspondence}

In this section, we use the bijection between $a$-core partitions and $\N^{a-1}$ to prove several combinatorial results.  This correspondence comes from taking the Ap\'ery tuple of the numerical set associated to an $a$-core partition via the map $\varphi$.  By Proposition \ref{HooksAreComplementOfAtom}, a partition $\lambda$ is an $a$-core if and only if the atom monoid of $\varphi^{-1}(\lambda)$ contains $a$. 
For $a \in A(T)$ we have $n + a \in T$ for any $n \in T$, which means that if $x_i \in \Ap(T),\ x_i + ka \in T$ for any $k \in \N$. This shows that the Ap\'ery set and Ap\'ery tuple uniquely determine a numerical set whose atom monoid contains $a$. 





Consider $(x_1, \ldots, x_{a-1}) \in \N^{a-1}$ and the associated numerical set $T = \{ ax_i + i + ma : m \in \bb{N}, 1 \leq i \leq a-1\}$. We see that $a \in A(T)$ and $\Ap(T) = (x_1, \ldots, x_{a-1})$.
Hence $\N^{a-1}$ is in bijection with numerical sets whose atom monoid contains $a$.  

In summary, we have the following one-to-one correspondences:
	$$		\left\{\begin{array}{c}
				\text{$a$-core} \\
				\text{partitions}
			\end{array}\right\}
		\longleftrightarrow \left\{
			\begin{array}{c}
				\text{numerical sets} \\
				\text{whose atom monoid} \\
				\text{contains $a$}
			\end{array}\right\}
		\longleftrightarrow \left\{
			\begin{array}{c}
				\text{the tuples} \\
				\text{of $\bb{N}^{a-1}$}
			\end{array}\right\},$$
completing the proof of Proposition \ref{bijections}. Note that the origin of $\N^{a-1}$ corresponds with the numerical set $T = \bb{N}$, which corresponds with the empty partition, an $a$-core for any $a$.

Recall that the Frobenius number $F(T)$ of the numerical set $T$ is the maximum element of its complement. Note that $F(T) \notin A(T)$ but $n \in A(T)$ for any $n > F(T)$.  By Proposition \ref{HooksAreComplementOfAtom}, the maximum hook length of $\varphi(T)$ is $F(T)$.  Also, if $a \in A(T)$ and $\Ap(T)= (x_1,\ldots,x_{a-1})$, then $F(T) = \max\{ax_i + i - a\}$.
The above bijection allows us to easily compute the number of $a$-core partitions by maximum hook length.

\begin{proposition}
	For $1 \leq \ell \leq a-1$, the number of $a$-core partitions with maximum hook length $ak + \ell$ is $(k+2)^{\ell-1}(k+1)^{a-\ell-1}$.
\end{proposition}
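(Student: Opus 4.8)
The plan is to transport the statement through the bijection of Proposition \ref{bijections} and reduce it to a constrained lattice-point count governed by residues modulo $a$. By Proposition \ref{HooksAreComplementOfAtom} the maximum hook length of $\varphi(T)$ equals $F(T)$, and for a numerical set $T$ with $a \in A(T)$ and $\Ap(T) = (x_1,\ldots,x_{a-1})$ we have $F(T) = \max_i\{a x_i + i - a\}$. So the proposition is equivalent to counting the tuples $(x_1,\ldots,x_{a-1}) \in \N^{a-1}$ for which $\max_{1 \le i \le a-1}\{a x_i + i - a\} = ak+\ell$.

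First I would exploit residues. Each quantity $a x_i + i - a$ is congruent to $i$ modulo $a$, while the target value $ak+\ell$ is congruent to $\ell$. Since $1 \le i,\ell \le a-1$, the only index whose term can possibly equal $ak+\ell$ is $i = \ell$. Hence the maximum equals $ak+\ell$ exactly when $a x_\ell + \ell - a = ak+\ell$, which pins $x_\ell = k+1$, and every other term satisfies $a x_i + i - a \le ak+\ell$.

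Next I would count the remaining free coordinates. For $i \ne \ell$ the inequality $a x_i + i - a \le ak+\ell$ rearranges to $x_i \le k+1 + (\ell-i)/a$. Because $-a < \ell - i < a$ and $\ell - i \ne 0$, the correction term $(\ell-i)/a$ lies strictly between $-1$ and $1$ and is nonzero, so the integer bound becomes $x_i \le k+1$ when $i < \ell$ (giving the $k+2$ values $0,\ldots,k+1$) and $x_i \le k$ when $i > \ell$ (giving $k+1$ values). There are $\ell-1$ indices $i < \ell$ and $a-1-\ell$ indices $i > \ell$ in $[1,a-1]$, while $x_\ell$ is forced, so the product of independent choices is $(k+2)^{\ell-1}(k+1)^{a-\ell-1}$, exactly as claimed.

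The only step demanding care is the residue observation together with the strict-inequality bookkeeping: one must verify that no term other than the $\ell$-th can attain the maximal value, and that rounding $x_i \le k+1+(\ell-i)/a$ genuinely separates into the two cases $i<\ell$ and $i>\ell$ rather than collapsing at a boundary. Once that is settled, the remaining enumeration is a routine count of independent choices, so I expect no further obstacle.
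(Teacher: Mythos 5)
Your proof is correct and takes essentially the same approach as the paper: both transport the count through the Ap\'ery-tuple bijection, use $F(T) = \max_i\{ax_i + i - a\}$ to force $x_\ell = k+1$, and then count the independent choices $x_i \le k+1$ for $i < \ell$ and $x_i \le k$ for $i > \ell$. Your explicit residue argument simply spells out the step the paper leaves implicit when it asserts the maximum must be attained at index $\ell$.
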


\begin{proof}
	The $a$-core partitions with maximum hook length $ak + \ell$ are those for which $\max\{ax_i + i - a\} = ax_\ell + \ell - a$ where $x_\ell = k+1$.
	This implies
		$x_\ell > x_i$ for any $i > \ell$, and
		$x_\ell \geq x_i$ for any $i < \ell$.
	Therefore, such partitions are in bijection with choices for the $x_i$ satisfying $x_i \in [0,k]$ for any $i \in [\ell+1,a-1]$ and $x_i \in [0,k+1]$ for any $i \in [1,\ell-1]$.
	
\end{proof}

\begin{proposition} \label{aCoresMaxHookLessThanAK}
	For any $k \in \bb{N}$, the number of $a$-core partitions with maximum hook length less than $ak$ is $(k+1)^{a-1}$.
\end{proposition}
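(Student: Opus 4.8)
The plan is to read the answer directly off the bijection of Proposition \ref{bijections} rather than to sum over maximum hook lengths. Recall that an $a$-core partition $\lambda$ corresponds to a numerical set $T$ with $a \in A(T)$, which in turn corresponds to its Ap\'ery tuple $(x_1,\ldots,x_{a-1}) \in \N^{a-1}$, and that the maximum hook length of $\lambda$ equals $F(T) = \max_{1 \le i \le a-1}\{ax_i + i - a\}$ by the discussion preceding the previous proposition. Thus the requirement that $\lambda$ have maximum hook length less than $ak$ translates into the simultaneous inequalities $ax_i + i - a < ak$ for all $i \in [1,a-1]$.

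First I would solve each inequality for $x_i$. Rearranging gives $x_i < k + 1 - i/a$. Since $1 \le i \le a-1$ forces $i/a \in (0,1)$, the right-hand side lies strictly between $k$ and $k+1$; as $x_i$ is a nonnegative integer this is equivalent to $x_i \in \{0,1,\ldots,k\}$. Hence the tuples meeting all the constraints are exactly the elements of $\{0,1,\ldots,k\}^{a-1}$, of which there are $(k+1)^{a-1}$.

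The only point requiring care is the fractional bound: one must check that the strict inequality $x_i < k+1-i/a$ collapses to the clean range $x_i \le k$ uniformly across $i \in [1,a-1]$, and observe that the all-zero tuple (the empty partition, with maximum hook length $-1$) is correctly included. This is the main---though minor---obstacle; the rest is a direct enumeration.

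Alternatively, one could derive the count by summing the formula of the previous proposition. Writing $u = j+2$ and $v = j+1$, the inner sum $\sum_{\ell=1}^{a-1} (j+2)^{\ell-1}(j+1)^{a-\ell-1} = \sum_{m=0}^{a-2} u^m v^{a-2-m}$ equals $(u^{a-1}-v^{a-1})/(u-v)$, which simplifies to $(j+2)^{a-1}-(j+1)^{a-1}$ because $u-v=1$. Summing this over $0 \le j \le k-1$ and adding $1$ for the empty partition then telescopes to $(k+1)^{a-1}$. I would present the direct enumeration as the main argument and mention this telescoping computation as an independent check.
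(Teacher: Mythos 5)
Your main argument is correct and is essentially the paper's own proof: both translate ``maximum hook length less than $ak$'' through the Ap\'ery-tuple bijection into $\max_i\{ax_i + i - a\} < ak$, observe that this is equivalent to $x_i \le k$ for every $i$, and count the $(k+1)^{a-1}$ lattice points of $[0,k]^{a-1} \subset \N^{a-1}$. Your telescoping computation from the preceding proposition is a valid independent check (the paper instead gives a second proof later, identifying the polytope $\bigcap_{i=1}^{a-1}\mathcal{P}_{a,\,ak+i}$ with the cube $[0,k]^{a-1}$), but your primary route coincides with the paper's.
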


\begin{proof}
	An $a$-core partition $\lambda$ has maximum hook length less than $ak$ if and only if $\max\{ax_i + i - a\} < ak$, where $(x_1,\ldots,x_{a-1})$ is the Ap\'ery tuple of the corresponding numerical set. This holds if and only if $x_i \leq k$ for each $i$.
	Therefore the $a$-core partitions with maximum hook length less than $ak$ are those which correspond with the lattice points of $[0,k]^{a-1} \subset \N^{a-1}$.
\end{proof}

We can similarly find the number of $a$-cores with a fixed number of parts.  The construction of $\varphi(T)$ from $T$ shows that the number of parts is equal to the size of $\N \sminus T$.  So the number of parts of $\varphi(T)$ is equal to $g(T)$.

\begin{proposition}\label{bergProp1}
	The number of $a$-core partitions with $g$ parts is $\binom{g+a-2}{a-2}$.
\end{proposition}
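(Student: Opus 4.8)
The plan is to transport the count through the bijections of Proposition \ref{bijections} and reduce to counting lattice points of $\N^{a-1}$ lying on a fixed hyperplane. First I would recall from the discussion preceding this proposition that the number of parts of $\varphi(T)$ equals $g(T)$, the genus of $T$. Hence counting $a$-core partitions with $g$ parts is equivalent to counting numerical sets $T$ with $a \in A(T)$ and $g(T) = g$, which via the Ap\'ery-tuple bijection is the same as counting tuples $(x_1,\ldots,x_{a-1}) \in \N^{a-1}$ whose associated numerical set has genus $g$.

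The key step is to express $g(T)$ directly in terms of the Ap\'ery tuple $(x_1,\ldots,x_{a-1})$. I would sort the gaps $\N \sminus T$ by residue class modulo $a$. Since $0 \in T$ and $a \in A(T)$, every multiple of $a$ lies in $T$, so the residue class $0$ contributes no gaps. For each $i \in [1,a-1]$, the smallest element of $T$ congruent to $i \pmod a$ is $ax_i + i$, and because $a \in A(T)$ every larger element in this class also lies in $T$; thus the gaps in class $i$ are exactly $i,\, i+a,\, \ldots,\, i+(x_i-1)a$, contributing precisely $x_i$ gaps. Summing over all classes gives the identity $g(T) = \sum_{i=1}^{a-1} x_i$.

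With this identity established, the $a$-core partitions with $g$ parts correspond bijectively to the tuples $(x_1,\ldots,x_{a-1}) \in \N^{a-1}$ satisfying $x_1 + \cdots + x_{a-1} = g$. The number of such tuples is the number of weak compositions of $g$ into $a-1$ nonnegative parts, which by the standard stars-and-bars count equals $\binom{g + (a-1) - 1}{(a-1)-1} = \binom{g+a-2}{a-2}$, as claimed.

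I do not expect a genuine obstacle here; the only point requiring care is the bookkeeping of gaps by residue class, in particular verifying that the residue class $0$ contributes nothing. This follows immediately from $a \in A(T)$ together with $0 \in T$, and once it is noted the rest is a routine combinatorial identity.
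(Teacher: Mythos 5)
Your proposal is correct and follows essentially the same route as the paper: both use the Ap\'ery-tuple bijection, establish the identity $g(T) = x_1 + \cdots + x_{a-1}$ by locating the gaps within each residue class modulo $a$, and then count lattice points on the simplex $x_1 + \cdots + x_{a-1} = g$ by stars-and-bars. Your write-up just spells out the residue-class bookkeeping (including the observation that class $0$ contributes no gaps) more explicitly than the paper's one-line justification.
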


\begin{proof}
Suppose $a \in A(T)$ and $\Ap(T) = (x_1,\ldots,x_{a-1})$. 
If $n \equiv i \Mod a$ then $n \in T$ if and only if $n \geq ax_i + i$. Therefore the genus of $T$ is $x_1 + \cdots + x_{a-1}$, and the number of $a$-cores with $g$ parts is equal to the number of points of the simplex $(x_1,\ldots,x_{a-1}) \in \N^{a-1}$ such that $x_1 + \cdots + x_{a-1} = g$.
It is well-known that there are $\binom{g+a-2}{a-2}$ such points.
\end{proof}

\begin{proposition}\label{bergProp2}
	The number of $a$-core partitions with less than or equal to $g$ parts is $\binom{g+a-1}{a-1}$.
\end{proposition}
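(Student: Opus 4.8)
The plan is to reduce Proposition \ref{bergProp2} to the already-established Proposition \ref{bergProp1} by a direct summation over the number of parts. By Proposition \ref{bergProp1}, the number of $a$-core partitions with exactly $g$ parts is $\binom{g+a-2}{a-2}$, so the number with at most $g$ parts is $\sum_{j=0}^{g} \binom{j+a-2}{a-2}$. The key step is to recognize this as a telescoping (hockey-stick) identity: $\sum_{j=0}^{g}\binom{j+a-2}{a-2} = \binom{g+a-1}{a-1}$. This is the standard identity $\sum_{j=0}^{g}\binom{j+r}{r} = \binom{g+r+1}{r+1}$ with $r = a-2$, which follows immediately from repeated application of Pascal's rule.

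Alternatively, and perhaps more in keeping with the polytope viewpoint that pervades this section, I would argue directly via the Ap\'ery-tuple bijection. By the computation in the proof of Proposition \ref{bergProp1}, an $a$-core partition with at most $g$ parts corresponds to a tuple $(x_1,\ldots,x_{a-1}) \in \N^{a-1}$ with $x_1 + \cdots + x_{a-1} \le g$. So I want to count lattice points of the simplex $\{(x_1,\ldots,x_{a-1}) \in \N^{a-1} : \sum x_i \le g\}$. The clean way to do this is to introduce a slack variable $x_a = g - (x_1 + \cdots + x_{a-1}) \ge 0$, giving a bijection with tuples $(x_1,\ldots,x_a) \in \N^a$ satisfying $x_1 + \cdots + x_a = g$. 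By the same stars-and-bars count already invoked in Proposition \ref{bergProp1} (now with $a$ variables summing to $g$), there are exactly $\binom{g+a-1}{a-1}$ such tuples.

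I would present the slack-variable version, since it parallels the preceding proof and makes the appearance of $\binom{g+a-1}{a-1}$ transparent without needing to cite a separate binomial identity. Concretely: the inequality $\sum_{i=1}^{a-1} x_i \le g$ with all $x_i \in \N$ is equivalent to the existence of a unique $x_a \in \N$ with $\sum_{i=1}^{a} x_i = g$, and the number of weak compositions of $g$ into $a$ nonnegative parts is $\binom{g+a-1}{a-1}$.

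There is no real obstacle here; the statement is essentially a corollary of Proposition \ref{bergProp1}. The only thing requiring mild care is the off-by-one bookkeeping: confirming that passing from \emph{exactly $g$ parts among $a-1$ variables} to \emph{at most $g$ parts} corresponds to adding exactly one slack coordinate, so the binomial upper parameter shifts from $a-2$ to $a-1$ and the sum bound stays $g$. One should also note that the $g=0$ (empty partition) case is correctly included, since it contributes the tuple $(0,\ldots,0)$ and $\binom{a-1}{a-1}=1$.
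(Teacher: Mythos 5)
Your proof is correct and, in the version you chose to present, it is essentially the paper's own argument: the paper likewise counts tuples $(x_1,\ldots,x_{a-1}) \in \N^{a-1}$ with $x_1+\cdots+x_{a-1} \le g$ by introducing a slack variable $y$ to reduce to weak compositions of $g$ into $a$ parts, giving $\binom{g+a-1}{a-1}$. Your alternative hockey-stick summation is also fine, but it is not needed and the slack-variable route matches the paper exactly.
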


\begin{proof}
The number of numerical sets $T$ with $a \in A(T)$ and genus less than or equal to $g$ is the number of points $(x_1,\ldots,x_{a-1}) \in \bb{N}^{a-1}$ such that $x_1 + \cdots + x_{a-1} \leq g$.
Counting these points is the same as counting the number of points $(x_1,\ldots,x_{a-1},y) \in \bb{N}^{a}$ such that $x_1+\ldots+x_{a-1} + y = g$. Therefore there are $\binom{g+a-1}{a-1}$ such points.
\end{proof}

These two results together give another proof Berg and Vazirani's Proposition \ref{BergVazirani} stated in the introduction \cite{Berg}.

Since the conjugate of an $a$-core partition is also an $a$-core, the number of $a$-cores with $g$ parts is equal to the number of $a$-cores with largest part $g$.
Hence Propositions \ref{bergProp1}, \ref{bergProp2}, and \ref{BergVazirani} may be restated with ``largest part $g$'' in place of ``$g$ parts''.

We close this section by giving another interpretation of Theorem 1.9 of \cite{Johnson} where Johnson relates the size of a partition corresponding to a quadratic function evaluated at the associated lattice point.  Since our correspondence between core partitions and lattice points is different we get a different function, but the ideas are similar.

\begin{proposition}
Let $T$ be a numerical set with $a \in A(T)$ and $Ap(T) = (x_1,\ldots, x_{a-1})$.  Then the size of the partition $\varphi(T)$ is
\begin{eqnarray*}
F_a(x_1, \ldots, x_{a-1})  & = &  \frac{a}{2} \sum_{i=1}^{a-1} x_i(x_i - 1) + \sum_{i=1}^{a-1} i x_i - \frac{1}{2}\left(\sum_{i=1}^{a-1} x_i\right)\left(-1 + \sum_{i=1}^{a-1} x_i\right)  \\
& = &  \frac{a-1}{2} \sum_{i=1}^{a-1} x_i^2 + \sum_{i=1}^{a-1} \left(i-\frac{a-1}{2}\right) x_i  - \sum_{1 \le i < j \le a-1} x_i x_j
\end{eqnarray*}
\end{proposition}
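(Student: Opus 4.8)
The plan is to compute the size $|\varphi(T)|$, the total number of boxes of the Young diagram, directly from the profile construction and then rewrite the answer in the Ap\'ery coordinates $x_1,\dots,x_{a-1}$. Write the gaps of $T$ as $n_1 < n_2 < \cdots < n_g$, where $g = g(T)$. The key reduction is to express the size as the sum of the gaps minus a binomial correction term.

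First I would identify the rows of $\varphi(T)$ with the gaps of $T$. In the construction each gap $n$ produces exactly one upward step, and the horizontal coordinate at which that step occurs is the number of right-steps taken before it, namely $|\{t \in T : t < n\}|$; this count is the length of the corresponding row. Since $|\{t \in T : t < n\}| = n - |\{\text{gaps} < n\}|$, the row attached to $n_j$ has length $n_j - (j-1)$, and summing over the $g$ rows gives
\begin{equation*}
|\varphi(T)| = \sum_{j=1}^{g}\bigl(n_j - (j-1)\bigr) = \sum_{n \in \N \sminus T} n \;-\; \binom{g}{2}.
\end{equation*}

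Next I would evaluate both terms using the Ap\'ery tuple. As in the proof of Proposition \ref{bergProp1}, an integer $n \equiv i \pmod a$ lies in $T$ iff $n \ge ax_i + i$, so the gaps in residue class $i$ (for $1 \le i \le a-1$; class $0$ has no gaps because $a \in A(T)$ forces every multiple of $a$ into $T$) are exactly $\{\, i + ka : 0 \le k \le x_i - 1 \,\}$. Summing within each class and then over $i$ yields
\begin{equation*}
\sum_{n \in \N \sminus T} n = \sum_{i=1}^{a-1}\sum_{k=0}^{x_i-1}(i + ka) = \sum_{i=1}^{a-1} i\, x_i + \frac{a}{2}\sum_{i=1}^{a-1} x_i(x_i-1),
\end{equation*}
while the same description gives $g = \sum_{i=1}^{a-1} x_i$, so $\binom{g}{2} = \tfrac12\bigl(\sum_i x_i\bigr)\bigl(-1 + \sum_i x_i\bigr)$. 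Substituting both into the size formula reproduces the first displayed expression for $F_a$ verbatim.

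The second equality is then routine algebra: expanding $\bigl(\sum_i x_i\bigr)^2 = \sum_i x_i^2 + 2\sum_{i<j} x_i x_j$ and collecting the quadratic, linear, and cross terms produces $\frac{a-1}{2}\sum_i x_i^2 + \sum_i\bigl(i - \frac{a-1}{2}\bigr)x_i - \sum_{i<j} x_i x_j$. The only genuinely geometric input is the identification that the row attached to $n_j$ has length $n_j - (j-1)$, and I expect justifying this reading of the profile to be the main point requiring care; once it is in hand, the rest is the Ap\'ery decomposition by residue class together with the elementary expansion above.
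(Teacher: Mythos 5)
Your proposal is correct and takes essentially the same approach as the paper: both reduce the size to $\sum_{n \in \N \sminus T} n - \binom{g}{2}$ and then evaluate the sum of the gaps residue class by residue class via the Ap\'ery tuple, finishing with the same algebraic expansion. The only cosmetic difference is how the key identity is justified --- you read the row lengths $n_j - (j-1)$ directly off the profile, while the paper sums the first-column hook lengths (which are exactly the gaps of $T$) and subtracts the overcount $(g(T)-1)g(T)/2$.
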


\begin{proof}
In the proof of Proposition \ref{bergProp2} we noted that the genus of a numerical set $T$ is the sum of the elements of the corresponding Ap\'ery tuple.  As noted above, the number of parts of $\varphi(T)$, which is equal to the number of rows of its Young diagram, is given by the genus of $T$.  By Proposition \ref{HooksAreDifferences} the hooks in the first column of the Young diagram are exactly the elements of $\N \sminus A(T)$.
By the definition of a hook, the sum of these hook lengths is almost the size of $\varphi(T)$, except that we have overcounted the $i$-th box from the top $i-1$ times. This means we have overcounted $(g(T) - 1)g(T)/2$ boxes in the Young diagram and the size of $\varphi(T)$ is the sum of the gaps of $T$ minus $(g(T) - 1)g(T)/2$.




If $ax_i + i$ is the smallest element of $T$ congruent to $i \Mod a$, then the sum of gaps congruent to $i$ is
	$$\sum_{n=0}^{x_i-1} an + i = \frac{a x_i (x_i-1)}{2} + ix_i.$$
Summing over all $i \in [1,a-1]$ and using $g(T) = \sum_{i=1}^{a-1} x_i$ completes the proof. 
\end{proof}


\section{The $(a,b)$-core Polytope}\label{abCorePolytope}

In this section we use the bijections of Proposition \ref{bijections} to prove  Theorem \ref{KeithNath} and the stronger result that simultaneous $(a,b_1,\ldots, b_m)$-core are in bijection with lattice points of a polytope that we define below.  For now, we do not necessarily assume that $\gcd(a,b) = 1$ but we do assume that $a\nmid b$.  Suppose that $b = ak + \ell$ where $\ell \in [1,a-1]$ and that $T$ is a numerical set such that $\varphi(T)$ is an $a$-core partition with Ap\'ery tuple $\Ap(T) = (x_1, \ldots, x_{a-1})$. By the remark following Proposition \ref{HooksAreComplementOfAtom}, $\varphi(T)$ is a $b$-core partition if and only if $b \in A(T)$, which is true if and only if $a x_i + i + b \in T$ for all $i \in [1,a-1]$.



If $i + \ell < a$ then $a x_i + i + b \in T$ if and only if $a x_i + i + b \geq a x_{i + \ell} + (i + \ell)$. Similarly if $i + \ell > a$ then $a x_i + i + b \in T$ if and only if $a x_i + i + b \geq a x_{i + \ell - a} + (i + \ell - a)$.
Therefore $\varphi(T)$ is a $b$-core if and only if $Ap(T)$ satisfies the inequalities
	\begin{align*}
		x_\ell & \leq  k, &  \\
		x_{i + \ell} & \leq k + x_i,  	& \text{ if } i + \ell < a, \\
		x_{i + \ell - a} & \leq k + x_i + 1, 	& \text{ if } i + \ell > a,\\
		x_i & \geq 0. 
	\end{align*}
Let $\cc P_{a,b} \subseteq \R^{a-1}$ be the region defined by the intersection of these half-spaces.  This is a rational polyhedral cone and is a rational polytope if and only if it is bounded, which is true if and only if $\gcd(a,b) = 1$.  We now state and prove a more general result.


\begin{proposition} \label{multicoreBijection}
	Suppose $\gcd(a,b_1, \ldots, b_m) = 1$ where we write $b_j = ak_j + \ell_j$ for each $j \in [1,m]$ with $\ell_j \in [1,a-1]$.  There is a bijection between $(a,b_1,\ldots,b_m)$-core partitions and the integer points of the polytope defined by the following inequalities: 
		\begin{align}
			x_{\ell_j} &\leq k_j, \label{Pab1} \\
			x_{i + \ell_j} &\leq k_j + x_i, 	& \text{ if } i + \ell_j < a, \label{Pab2} \\
			x_{i + \ell_j - a} &\leq k_j + x_i + 1, 	& \text{ if } i + \ell_j > a, \label{Pab3} \\
			x_i &\geq 0 & \text{for } i \in [1,a-1], \label{Pab4}
		\end{align}
where we have one set of inequalities (\ref{Pab1}), (\ref{Pab2}), and (\ref{Pab3}) for each $j \in [1,m]$. 

	\end{proposition}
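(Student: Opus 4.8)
The plan is to reduce the multi-core condition to the single-$b$ analysis carried out in the paragraph preceding the proposition, and then to establish that the resulting region is bounded. By Proposition \ref{bijections}, the $a$-core partitions are in bijection with $\N^{a-1}$ via $T \mapsto \Ap(T)$, and it is this correspondence that is encoded by the nonnegativity constraints \eqref{Pab4}. A partition $\varphi(T)$ is a simultaneous $(a,b_1,\ldots,b_m)$-core precisely when it is an $a$-core that is in addition a $b_j$-core for every $j$; by the remark following Proposition \ref{HooksAreComplementOfAtom}, the latter condition says exactly $b_j \in A(T)$ for each $j$. So the entire problem is to translate the finitely many membership conditions $b_j \in A(T)$ into linear inequalities on the tuple $\Ap(T)$.

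First I would handle a single $b_j$. Since $a \in A(T)$, verifying $b_j + T \subseteq T$ reduces to checking $b_j + t \in T$ only as $t$ ranges over the finitely many generators $0,\,ax_1+1,\,\ldots,\,ax_{a-1}+(a-1)$ of the Ap\'ery set, because adding multiples of $a$ keeps an element in $T$. Writing $b_j + (ax_i + i) = a(x_i + k_j) + (i + \ell_j)$ and comparing against the smallest element of $T$ in the residue class of $i + \ell_j$ modulo $a$ yields exactly \eqref{Pab1} (from the generator $0$), \eqref{Pab2} (when $i + \ell_j < a$), and \eqref{Pab3} (when $i + \ell_j > a$), identically to the computation already displayed for one $b$. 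Intersecting over all $j$ then shows that the integer points of the region cut out by \eqref{Pab1}--\eqref{Pab4} are precisely the Ap\'ery tuples of numerical sets that are simultaneous $(a,b_1,\ldots,b_m)$-cores, which is the claimed bijection.

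It remains to justify calling this region a \emph{polytope}, that is, to show it is bounded; this is the main obstacle and it simultaneously reproves Theorem \ref{KeithNath}. My approach is to read \eqref{Pab1}--\eqref{Pab3} as a system of difference inequalities on the directed graph with vertex set $\{0,1,\ldots,a-1\}$ carrying an edge $i \to (i + \ell_j \bmod a)$ for every $i$ and $j$, where we set $x_0 = 0$: then \eqref{Pab1} is the edge $0 \to \ell_j$, \eqref{Pab2} gives the non-wrapping edges, and \eqref{Pab3} the wrapping ones, each of the form $x_{i'} \le x_i + c_{i,j}$. Because $b_j \equiv \ell_j \pmod a$ we have $\gcd(a,b_1,\ldots,b_m) = \gcd(a,\ell_1,\ldots,\ell_m)$, so when this equals $1$ the residues $\ell_1,\ldots,\ell_m$ generate $\Z/a\Z$, every vertex is reachable from $0$, and summing the constants along any path from $0$ to $i$ bounds $x_i$ from above while \eqref{Pab4} bounds it below, forcing boundedness. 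The converse direction, which yields Theorem \ref{KeithNath}, is the delicate part: when $d = \gcd(a,\ell_1,\ldots,\ell_m) > 1$, every edge preserves residues modulo $d$ and every constraint involving the constants $k_j$ lives in the class of $0$, so translating all coordinates in any fixed nonzero residue class simultaneously preserves every inequality, exhibiting an unbounded direction and hence infinitely many lattice points. I expect the cleanest statement of this last step, together with the bookkeeping of the $+1$ wraparound term in \eqref{Pab3} and the role of the virtual coordinate $x_0$, to require the most care.
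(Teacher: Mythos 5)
Your proposal is correct and follows essentially the same route as the paper: the bijection is obtained exactly as you describe, by checking $b_j + t \in T$ only on the Ap\'ery generators and intersecting the single-$b$ regions $\cc P_{a,b_j}$, and your path-summation in the difference-constraint graph is a repackaging of the paper's inductive bound $x_{s \bmod a} \le \sum_{j} y_j(k_j+1)$ together with the observation that $\gcd(a,\ell_1,\ldots,\ell_m)=1$ makes every residue class reachable from $0$. Your closing paragraph on the converse (the translation-invariant unbounded direction when $d=\gcd>1$, using that all indices in \eqref{Pab2}--\eqref{Pab3} are congruent mod $d$ and the only absolute bounds \eqref{Pab1} sit in the class of $0$) is sound but goes beyond what the proposition requires; the paper asserts that equivalence without proof, so this is a bonus rather than a gap.
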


\begin{proof}
	Let $\cc Q$ be the intersection of the half-spaces defined by these inequalities and note that $\cc Q = \bigcap_{j = 1}^m \cc P_{a,b_j}$.  The lattice points of this region are in bijection with $(a,b_1,\ldots,b_m)$-cores, so we need only show that $\cc Q$ is bounded.  Suppose $(x_1,\ldots, x_{a-1}) \in \cc Q$ with each $x_i$ a nonnegative integer.  We give an upper bound on each $x_i$ that depends only on $a, b_1,\ldots, b_m$, which completes the proof.

	Since $(x_1,\ldots,x_{a-1})$ satisfies (\ref{Pab1}) for each $j\in [1,m]$, we see $x_{\ell_j} \leq k_j$.  After reindexing, (\ref{Pab2}) implies
		$x_i \leq k_j + x_{i - \ell_j}$ if $i > \ell_j,$
and (\ref{Pab3}) implies
		$x_i \leq k_j + 1 + x_{i - \ell_j + a}$ if $i < \ell_j.$
	Hence
		$$x_i \leq k_j + 1 + x_{i - \ell_j \pmod* a},$$
	for each $j \in [1,m]$ where we write $x_{i \pmod* a}$ as shorthand for $x_{i'}$ where $i' \in [1,a-1]$ and $i' \equiv i \Mod a$.  Therefore
		$$x_{\ell_{j_1} + \ell_{j_2} \Mod* a} \leq k_{j_1} + 1 + x_{\ell_{j_2}} \leq k_{j_1} + k_{j_2} + 2.$$
	Proceeding by induction, if $s = \sum_{j=1}^m y_j \ell_j$ for some $y_1,\ldots, y_m \in \bb{N}$ then
		$$x_{s \pmod* a} \leq \sum_{j=1}^m y_j(k_j + 1).$$
		
Since $\gcd(a,b_1,\ldots, b_m) = 1$, for each $i \in [1,a-1]$ there exist nonnegative integers $y_1,\ldots, y_m$ such that $\sum_{i=1}^m y_j \ell_j \equiv i \pmod{a}$.  This gives an upper bound on each $x_i$ depending only on $a,b_1,\ldots, b_m$.
	
	
\end{proof}

In particular, this proves Theorem \ref{KeithNath}.  A formula for the number of integer points of the polytope $\cc Q$ is equivalent to a formula for the number of $(a,b_1,\ldots, b_m)$-cores.  For example, giving such a formula in the $m=1$ case is equivalent to Theorem \ref{AndersonsTheorem} of Anderson.  We note that several results in this area can be phrased in terms of counting integer points in special polytopes \cite{Amdeberhan, Xiong1, Yang}.

In general it is difficult to give a formula for the number of integer points of a polytope in terms of the defining half-spaces but there are some circumstances in which the polytopes are particularly nice.  For example, we can use this method to give another proof of Proposition \ref{aCoresMaxHookLessThanAK}.



\begin{proof}[Second proof of Proposition \ref{aCoresMaxHookLessThanAK}]
	The set of $a$-core partitions with maximum hook length less than $ak$ is exactly the set of $(a,\, ak+1,\ldots,\, ak+(a-1))$-core partitions, since an $(a,b)$-core is also an $(a+b)$-core by Proposition \ref{HooksAreComplementOfAtom}.
	This set corresponds with the lattice points of the polytope $\cc Q = \bigcap_{i=1}^{a-1} \cc P_{a,\, ak+i}$.
	By (\ref{Pab1}) we have $\cc Q \subseteq [0,k]^{a-1}$, and by (\ref{Pab2}) and (\ref{Pab3}) we have $[0,k]^{a-1} \subseteq \cc P_{a,ak+i}$ for each $i$.
	Therefore $\cc Q = [0,k]^{a-1}$, which contains $(k+1)^{a-1}$ integer points, so there are $(k+1)^{a-1}$ $a$-core partitions with maximum hook length less than $ak$.

\end{proof}

We close this section with a suggestion for future research.  Formulas for the number of integer points in families of rational polytopes can be be quite subtle, particularly when the polytope has vertices with large denominators.  The volume of a polytope is often a good approximation for its number of integer points and is usually easier to find.
\begin{problem}\label{Prob1}
Give an approximation for the volume of the $(a,b_1,\ldots, b_m)$-core polytope in terms of the integers $a,b_1,\ldots, b_m$.
\end{problem}


\section{Counting $(a,b)$-cores from semigroups}

In this section we further investigate the correspondence between numerical sets with atom monoid containing $a$ and $a$-core partitions.  We focus on a natural subclass of these numerical sets, those that are actually numerical semigroups.  Recall that a numerical set is a numerical semigroup if and only if it is closed under addition, or equivalently, it is equal to its atom monoid.  We see that the bijection $\varphi$ takes a numerical semigroup $S$ to an $a$-core partition if and only if $a \in S$.  Our main goal in this section is to describe the set of $a$-core partitions that come from numerical semigroups and to count the set of simultaneous $(a,b)$-cores that come from semigroups for certain pairs $(a,b)$.

Recall from Theorem \ref{AndersonsTheorem} that for positive integers $a,b \ge 2$ with $\gcd(a,b) = 1$ the total number of $(a,b)$-cores is 
\[
C(a,b) = \frac{1}{a+b}\binom{a+b}{a}.
\]  
We are interested in finding the proportion of these partitions which come from semigroups via the map $\varphi$.  To do this we first show that these partitions are in bijection with the lattice points of a polytope contained in the $(a,b)$-core polytope of the previous section.  

A direct consequence of Proposition \ref{HooksAreComplementOfAtom} is that for a numerical semigroup $S$ the partition $\varphi(S)$ is an $(a,b)$-core if and only if $a,b \in S$.  Since $S$ is a semigroup it must also contain $\langle a,b\rangle$.  Our goal is to give formulas for $O(\langle a,b \rangle)$ in terms of $a$ and $b$ and to investigate the ratio $O(\langle a,b\rangle)/C(a,b)$.

Hellus and Waldi have studied exactly this problem in \cite{Hellus}.  They show that the set of oversemigroups of $\langle a,b\rangle$ are naturally in bijection with the set of integer points in a rational polytope.  For $a$ fixed and $b$ increasing they show that computing $O(\langle a,b\rangle)$ is equivalent to counting lattice points in dilates of this polytope and that they can therefore use techniques from Ehrhart theory to study the behavior of $O(\langle a,b \rangle)$.  This is notable because Ehrhart theory is also a major input of Johnson's proof of Armstrong's conjecture \cite{Johnson}.  In particular, they prove the following result.  A \emph{quasipolynomial} of degree $d$ is a function $f : \N^d \rightarrow \C$ of the form
\[
f(n) = c_d(n) n^d + c_{d-1}(n) n^{d-1} + \cdots + c_0(n)
\]
with periodic functions $c_i$ having integer periods, $c_d \neq 0$.
\begin{theorem}[Theorem 1.1 in \cite{Hellus}]\label{HellusTheorem}
Let $a \in \N,\ a > 1$.
\begin{enumerate}
\item There is a quasipolynomial of degree $a-1$ taking the value $O(\langle a,b \rangle)$ at each $b \in \N$ relatively prime to $a$.
\item The leading coefficient $c_{a-1}(n)$ of this quasipolynomial is constant and satisfies
\[
\frac{1}{(a-1)!\cdot a!} \le c_{a-1}(n) \le \frac{1}{(a-1) \cdot a!}.
\]
\item The function $O(\langle a,b \rangle)$ is increasing in both variables.
\end{enumerate}
\end{theorem}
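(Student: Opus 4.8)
The plan is to realize $O(\langle a,b\rangle)$ as a lattice-point count in a rational polytope and then run it through Ehrhart theory. First I would specialize the semigroup condition. By the Branco--Garc\'ia-Garc\'ia--Garc\'ia-S\'anchez--Rosales characterization \cite{Branco}, a tuple $(x_1,\ldots,x_{a-1}) \in \N^{a-1}$ is the Ap\'ery tuple of a numerical semigroup $S \ni a$ if and only if it lies in the Kunz cone
\[
C = \left\{\, x \in \R^{a-1} : x_i \ge 0,\ x_i+x_j \ge x_{i+j}\ (i+j<a),\ x_i+x_j+1 \ge x_{i+j-a}\ (i+j>a) \,\right\}.
\]
The key simplification, which distinguishes the semigroup case from the general numerical-set case of Proposition \ref{multicoreBijection}, is that membership $b\in S$ reduces to a \emph{single} inequality: the elements of $S$ in the residue class of $\ell$ (where $b \equiv \ell \pmod{a}$, $\ell \in [1,a-1]$) are exactly $\{a x_\ell + \ell + ma : m \ge 0\}$, so writing $b = ak+\ell$ we have $b \in S \iff x_\ell \le k$. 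Hence
\[
O(\langle a,b\rangle) = \#\bigl( C \cap \{x_\ell \le k\} \cap \Z^{a-1} \bigr).
\]
Using the Kunz inequalities one checks in one line that $C \cap \{x_\ell \le k\} \subseteq \cc P_{a,b}$, so this is indeed a subpolytope of the $(a,b)$-core polytope; it is bounded because $\gcd(\ell,a)=1$ propagates the bound $x_\ell \le k$ to every coordinate, exactly as in the proof of Proposition \ref{multicoreBijection}.

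For part (1), I would fix the residue $\ell$ and view $P_\ell(k) = C \cap \{x_\ell \le k\}$ as a parametric polytope whose facets are all fixed except the single hyperplane $x_\ell = k$, which translates linearly in $k$. By Ehrhart theory for such families (the number of integer points of $\{x : Ax \le c + k d\}$ is a quasipolynomial in $k$), the enumerator of $P_\ell(k)$ is a quasipolynomial in $k$ of degree $\dim P_\ell(k) = a-1$. Substituting $k = (b-\ell)/a$ converts this into a quasipolynomial in $b$ on the class $\ell$, and assembling the finitely many classes coprime to $a$ yields a single quasipolynomial of degree $a-1$ on $\{\gcd(b,a)=1\}$.

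For part (2), the degree-$(a-1)$ coefficient of the enumerator of $P_\ell(k)$ is $\mathrm{vol}(Q_\ell)$, where $Q_\ell = \lim_{k\to\infty}\tfrac1k P_\ell(k)$ is the homogenized polytope obtained by deleting the constant $+1$ terms,
\[
Q_\ell = \{\, y \ge 0 : y_i + y_j \ge y_{(i+j)\bmod a},\ y_\ell \le 1 \,\},
\]
so in the scale of $b$ one gets $c_{a-1} = \mathrm{vol}(Q_\ell)/a^{a-1}$. To see this is constant I would invoke the multiplicative symmetry of the homogeneous Kunz cone: the coordinate permutation $(Ty)_i = y_{\ell i \bmod a}$ is a volume-preserving linear automorphism of $\R^{a-1}$ that preserves the inequalities $y_i+y_j \ge y_{(i+j)\bmod a}$ and carries $Q_\ell$ onto $Q_1$, whence $\mathrm{vol}(Q_\ell)=\mathrm{vol}(Q_1)$ for every $\ell$. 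The two numerical bounds then reduce to sandwiching the full-dimensional polytope $Q_1$ between an inscribed and a circumscribed simplex: iterating $y_{i+1} \le y_i + y_1 \le y_i + 1$ gives $y_i \le i$, placing $Q_1$ inside a simplex of the required volume, while a suitable inscribed simplex supplies the matching lower bound.

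For part (3), I would first note that $O(\langle a,b\rangle) = O(\langle b,a\rangle)$ depends only on the set $\langle a,b\rangle$, so monotonicity in $a$ and in $b$ are the same statement, and it suffices to show $O(\langle a,b\rangle)$ is non-decreasing in $b$ for each fixed $a$. Along a single residue class this is immediate, since $P_\ell(k) \subseteq P_\ell(k+1)$ gives $O(\langle a,b\rangle) \le O(\langle a,b+a\rangle)$. The main obstacle is comparing $b$ with $b+1$: these change the residue $\ell$, so the moving facet $x_\ell \le k$ now constrains a \emph{different} coordinate, the polytopes no longer form a nested family, and there is no facet-by-facet containment. Writing $O(\langle a,b\rangle) = \sum_{S \ni a} \mathbf 1[b \in S]$ exposes the subtlety: each indicator is itself non-monotone in $b$, yet the sum must increase. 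I would attack this by constructing an explicit injection from the oversemigroups of $\langle a,b\rangle$ into those of $\langle a,b+1\rangle$ (for instance through a controlled modification of the gap set, equivalently a lattice-point-preserving affine comparison of the two Kunz polytopes), and I expect this injection to be the crux of the whole argument.
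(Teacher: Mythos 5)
You have correctly identified that the paper does not prove this theorem---it is imported verbatim from Hellus--Waldi \cite{Hellus}, and the paper only records that their proof realizes $O(\langle a,b\rangle)$ as a lattice-point count in a rational polytope (phrased there via lattice paths) and applies Ehrhart theory---so your proposal can only be judged on its own merits. Your setup is sound and matches the paper's Section 5 polytope: inside the Kunz cone cut out by (\ref{ineq-semi1})--(\ref{ineq-semi2}), membership $b \in S$ really does collapse to the single inequality $x_\ell \le k$, since closure under addition makes the remaining bounds redundant (in the paper's $\langle 3,8\rangle$ example, $x \le 5$ follows from $y \le 2$ and $2y+1 \ge x$). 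The $(\Z/a\Z)^\times$ coordinate permutation $(Ty)_i = y_{\ell i \bmod a}$ is a genuine volume-preserving symmetry of the homogeneous Kunz cone carrying $Q_\ell$ to $Q_1$, which correctly forces the leading coefficient to be independent of the residue class; this is a clean observation. Part (1) is essentially right, with one caveat you should flag: since only the facet $x_\ell = k$ moves, $P_\ell(k)$ is \emph{not} a dilate of a fixed polytope, and the parametric Ehrhart results you invoke for $\{x : Ax \le c + kd\}$ a priori give only \emph{eventual} quasipolynomiality, whereas the theorem asserts the value at \emph{every} $b$ coprime to $a$.

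The genuine gaps are in parts (2) and (3). For (2), your explicit estimate fails quantitatively: iterating $y_{i+1} \le y_i + 1$ yields the box $0 \le y_i \le i$, of volume $(a-1)!$, hence only $c_{a-1} \le (a-1)!/a^{a-1}$; already at $a=3$ this gives $2/9$, while the claimed upper bound $\frac{1}{(a-1)\cdot a!} = \frac{1}{12}$ is attained with \emph{equality} there (one computes $\mathrm{vol}(Q_1) = 3/4$, consistent with Theorem \ref{overs3}: $O(\langle 3, 6k+\ell\rangle) \sim 3k^2 = \frac{1}{12}b^2$). So the true upper bound is sharp for small $a$ and cannot come from a coordinate box, and the "suitable inscribed simplex" for the lower bound is named but never constructed; both bounds are precisely where the real work of \cite{Hellus} lies. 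For (3), you candidly leave the crux unproven: your own analysis shows why it is hard (the polytopes for residues $\ell$ and $\ell+1$ are not nested, and each indicator $\mathbf{1}[b \in S]$ is individually non-monotone in $b$), but the proposed injection from oversemigroups of $\langle a,b\rangle$ into those of $\langle a,b+1\rangle$ is only announced, not built. As it stands, part (1) is a proof modulo a citation issue, part (2) is a correct reduction with incorrect/missing estimates, and part (3) is a plan rather than a proof.
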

Hellus and Waldi note that the upper and lower bounds of the second part of the statement coincide for $a = 2,3$, that the upper bound is correct for $a=4$, and that for $a=5,6,7$ the correct value lies strictly between the upper and lower bound \cite{Kunz}.  With the above theorem, finding the quasipolynomial $O(a,b)$ for fixed $a$ can be done with a finite amount of computation.  We also note that the idea of using Ehrhart theory to give quasipolynomial formulas for quantities associated to numerical semigroups also appears in \cite{Kaplan}.

We give our own calculations for $a \le 4$, showing how to derive formulas of this type without prior knowledge that the answer is given by a quasipolynomial. We explicitly describe the $a-1$ dimensional polytope whose integer points are in bijection with the oversemigroups of $\langle a,b\rangle$ and then divide this into $a-2$ dimensional slices via parallel hyperplanes.  We find exact formulas for the number of integer points in each slice.



Our first goal is to give defining inequalities for the polytope whose integer points are in bijection with oversemigroups of $\langle a,b\rangle$.  This is equivalent to determining when an Ap\'ery tuple $(x_1,\ldots, x_{a-1})$ of a numerical set containing $a$ actually corresponds to a numerical semigroup containing $a$.  The following result of Branco, Garc\'ia-Garc\'ia, Garc\'ia-S\'anchez, and Rosales, a slight variation of Theorem 11 in \cite{Branco}, gives this characterization.

\begin{theorem}[Theorem 11 in \cite{Rosales}]
The map from a numerical semigroup to its Ap\'{e}ry tuple gives
a one-to-one correspondence taking semigroups $T$ containing $a$ to solutions $(k_1,\ldots, k_{a-1})$
of the system of inequalities
\begin{eqnarray}
x_i  \in \N & & \text{for all}\ i\in \{1,\ldots, a-1\} \\
x_i + x_j \ge x_{i+j}  & &\text{for all}\ 1 \le i \le j \le a-1,\
i+j \le a-1 \label{ineq-semi1}\\
x_i +x_j + 1 \ge x_{i+j-a} & &\text{for all}\ 1 \le i \le j \le a-1,\
i+j > a.  \label{ineq-semi2}
\end{eqnarray}
\end{theorem}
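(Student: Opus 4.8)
The plan is to leverage the bijection of Proposition \ref{bijections}, under which numerical sets $T$ with $a \in A(T)$ correspond exactly to tuples $(x_1,\ldots,x_{a-1}) \in \N^{a-1}$ via the Ap\'ery tuple. Since a numerical semigroup containing $a$ is precisely such a numerical set that is also closed under addition (equivalently, one with $A(T) = T$), the theorem reduces to the following question: among the numerical sets with $a \in A(T)$, which are closed under addition, and how does this condition read off the Ap\'ery tuple? So the entire content is a characterization of closure under addition in Ap\'ery coordinates.

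To answer this I would first record the structure of such a $T$. Setting $w_0 = 0$ and $w_i = a x_i + i$ for $i \in [1,a-1]$ --- the least element of $T$ in each residue class mod $a$ --- the hypothesis $a \in A(T)$ says precisely that $T = \bigcup_{i=0}^{a-1}\{w_i + ma : m \in \N\}$, i.e.\ in each residue class $T$ contains exactly the integers at or above the threshold $w_i$. In particular the nonnegative multiples of $a$ (the $i=0$ ray) all lie in $T$.

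The crux is a reduction lemma: such a $T$ is closed under addition if and only if $w_i + w_j \in T$ for all $0 \le i \le j \le a-1$. The forward direction is immediate because each $w_i \in T$. For the converse, write two arbitrary elements as $w_i + ma$ and $w_j + m'a$ with $m,m' \in \N$; their sum is $(w_i + w_j) + (m+m')a$, and if $w_i + w_j \in T$ then repeatedly using $a \in A(T)$ to add multiples of $a$ keeps the sum in $T$. This collapses the a priori infinite closure condition into finitely many membership checks, and I expect it to be the main point of the argument.

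It then remains to translate each check $w_i + w_j \in T$ into an inequality, which is routine modular bookkeeping. The case $i = 0$ is vacuous. For $i,j \ge 1$ one splits on $i+j$: if $i + j \le a-1$ then $w_i + w_j = a(x_i + x_j) + (i+j)$ lies in $T$ iff it is $\ge w_{i+j}$, i.e.\ $x_i + x_j \ge x_{i+j}$, which is (\ref{ineq-semi1}); if $i + j = a$ then $w_i + w_j = a(x_i + x_j + 1)$ is a multiple of $a$ and so automatically in $T$, explaining why no condition appears in this case; and if $i + j > a$ then $w_i + w_j = a(x_i + x_j + 1) + (i+j-a)$ lies in $T$ iff $x_i + x_j + 1 \ge x_{i+j-a}$, which is (\ref{ineq-semi2}). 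Combining the reduction lemma with this case analysis shows a tuple satisfies the displayed system exactly when its numerical set is a semigroup; together with Proposition \ref{bijections} this gives the asserted one-to-one correspondence.
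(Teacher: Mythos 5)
Your proposal is correct and complete. Note that the paper itself does not prove this statement: it is imported as Theorem 11 from \cite{Rosales} (a variant of Theorem 11 in \cite{Branco}), so there is no internal proof to compare against. Your argument is the standard one for this characterization, and it is the right one: using $a \in A(T)$ to write $T = \bigcup_{i=0}^{a-1}\{w_i + ma : m \in \N\}$ with $w_i = ax_i + i$ (exactly the structure the paper records in Section \ref{Correspondence}), reducing closure under addition to the finitely many checks $w_i + w_j \in T$, and then translating each check by residue class. Your case analysis is accurate, including the two points most easily fumbled: the case $i+j=a$, where $w_i + w_j = a(x_i+x_j+1)$ is a multiple of $a$ and hence automatically in $T$, which explains why the system in the statement has no inequality there; and the identification of semigroups containing $a$ with numerical sets satisfying both $a \in A(T)$ and $A(T)=T$, which is what lets you invoke Proposition \ref{bijections} for injectivity of $T \mapsto \Ap(T)$ and conclude the bijection onto the solution set.
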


Also, notice that $T \supseteq S$ if and only if
	\begin{equation}
	\ell_i \leq k_i \qquad \text{ for all $i$}.\label{ineq-over1}
	\end{equation}
Therefore, the set of inequalities (\ref{ineq-semi1}) -- (\ref{ineq-over1}) give necessary and sufficient conditions for $T$ to be an oversemigroup of $S$.

These inequalities define an $a-1$ dimensional polytope in which the lattice points correspond exactly with the oversemigroups of $S$.
In order to count the number of oversemigroups of $S$ we only need to count these lattice points. This polytope is of course contained in $\cc P_{a,b}$.  Hellus and Waldi study a similar polytope, but phrase their results in terms of counting lattice paths and do not make a connection to general $(a,b)$-core partitions or numerical sets \cite{Hellus}.

\begin{example}
	Consider $S = \langle 3, 8 \rangle$.
	The inequalities (\ref{ineq-semi1}) -- (\ref{ineq-over1}) reduce to
	\begin{align*}
	2x &\geq y \\
	2y + 1 &\geq x\\
	x &\leq 5 \\
	y &\leq 2
	\end{align*}
	which define the polytope:
	\begin{center}
		\tikzstyle{loosely dashed}=          [dash pattern=on 6pt off 6pt]
		\begin{tikzpicture}
			[point/.style={red, radius=0.07},
			ineq/.style={loosely dashed, blue!60, thick},
			scale=1]
	
			\clip (-0.5,-0.5) rectangle (5.5,2.5);
	
			\fill [gray!20!blue!20] (0,0) -- (1,0) -- (5,2) -- (1,2) -- cycle;
			\draw [thick, ->] (-0.9,0) -- (5.3,0)
				node [below] {$x$};
			\draw [thick, ->] (0,-0.9) -- (0,2.3)
				node [left] {$y$};
			\draw [ineq, domain=-.4:1.2] plot (\x, {2*\x});
			\draw [ineq, domain=-.5:5.5] plot (\x, {\x/2 - 1/2});
			\draw [ineq, domain=-.4:5.5] plot (\x, {2});
			\draw [ineq] (5,-.5) -- (5,4.5);
	
			\foreach \x in {1,2,3,4,5} {\draw [thick] (\x,-0.15) -- (\x,0.15);}
			\foreach \y in {1,2,3,4} {\draw [thick] (-0.15,\y) -- (0.15,\y);}
	
			\filldraw [point] (0,0) circle;
			\filldraw [point] (1,0) circle;
			\filldraw [point] (1,1) circle;
			\filldraw [point] (1,2) circle;
			\filldraw [point] (2,1) circle;
			\filldraw [point] (2,2) circle;
			\filldraw [point] (3,1) circle;
			\filldraw [point] (3,2) circle;
			\filldraw [point] (4,2) circle;
			\filldraw [point] (5,2) circle;
		\end{tikzpicture}
	\end{center}
	Each lattice point $(x,y)$ in this polytope uniquely corresponds to an oversemigroup of $S$, and thus with a $(3,8)$-core partition.
	There are 10 integer points in this polytope, so $O(3, 8) = 10$ and there are $10$ simultaneous $(3,8)$-core partitions associated to numerical semigroups.
\end{example}

\vspace{5mm}

It seems difficult to give a general formula for $O(\langle a,b \rangle)$ so we begin by analyzing the cases $a = 2,3,4$, finding explicit formulas for each.  When $a = 2$ it is clear that $O(\langle 2, 2k+1 \rangle) = k+1$ since any oversemigroup of $\langle 2, 2k+1 \rangle$ is determined uniquely by its smallest odd element.  Our next goal is to prove Theorem \ref{overs3} and Theorem \ref{overs4} that were stated in the introduction.  We note that both results express $O(\langle a,b\rangle)$ as a quasipolynomial in $b$ of degree $a-1$, and that they agree with the calculations in \cite{Hellus}.



In order to prove Theorem \ref{overs3}, we divide up the set of oversemigroups of $S = \langle 3,6k + \ell \rangle$ by genus.  It is easy to show that the genus of $\langle a,b \rangle$ is $(a-1)(b-1)/2$ \cite{Rosales}. For each integer $n \in [0,6k+\ell-1]$, the genus of $S$ we compute the number $O_n(S)$ of oversemigroups of $S$ with genus $n$.



\begin{lem} \label{overs3byG}
	If $S = \langle 3,\, 6k+\ell \rangle$ with $\ell \in \{1,2,4,5\}$ then
		$$O_n(S) = \begin{cases}
			\lfloor\frac{n}{3}\rfloor+1 & 0 \leq n \leq 3k + \frac{\ell}{2} - 1 \\
			\left\lfloor\frac{6k+\ell-1-n}{3}\right\rfloor+1 & 3k + \frac{\ell}{2} - 1 < n \leq 6k+\ell-1
		\end{cases}.$$
\end{lem}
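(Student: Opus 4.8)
The plan is to realize $O_n(S)$ as the number of integer points on a line segment and evaluate it with a short case analysis on $b \bmod 3$.

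\emph{Reduction to lattice points.} In the proof of Proposition \ref{bergProp1} it is shown that a numerical set with Ap\'ery tuple $(x_1,x_2)$ has genus $x_1+x_2$, so $O_n(S)$ counts the Ap\'ery tuples of oversemigroups $T$ of $S$ with $x_1+x_2=n$. For $a=3$ the semigroup inequalities (\ref{ineq-semi1})--(\ref{ineq-semi2}) reduce to $x_2\le 2x_1$ and $x_1\le 2x_2+1$, while the oversemigroup condition (\ref{ineq-over1}) reads $x_1\le s_1$ and $x_2\le s_2$, where I write $(s_1,s_2)=\Ap(S)$. Substituting $x_2=n-x$ with $x:=x_1$, the point $x\ge 0$ and $x\le n$ are automatic and one obtains
$$O_n(S) = \#\left\{x\in\Z : \max\!\left(\lceil n/3\rceil,\, n-s_2\right)\le x\le \min\!\left(\lfloor (2n+1)/3\rfloor,\, s_1\right)\right\}.$$
So the whole problem is to decide which of the four endpoints is binding.

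\emph{The Ap\'ery tuple and the key relation.} A direct computation of the least element of $S=\langle 3,6k+\ell\rangle$ in each residue class mod $3$ gives $\Ap(S)=(2k,4k),(4k+1,2k),(2k+1,4k+2),(4k+3,2k+1)$ for $\ell=1,2,4,5$ respectively, and in every case $s_1+s_2=6k+\ell-1=:G=g(S)$. The crucial observation is that the corner $(s_1,s_2)$ lies on one of the two semigroup boundary lines: if $b\equiv 1\pmod 3$ then $s_2=2s_1$ (so $G=3s_1$), while if $b\equiv 2\pmod 3$ then $s_1=2s_2+1$ (so $G=3s_2+1$). From this I would show that for all $0\le n\le G$ only one box constraint can ever be active. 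Indeed, when $b\equiv 1\pmod 3$ we have $n-s_2=n-2s_1\le n/3\le \lceil n/3\rceil$ precisely because $n\le 3s_1=G$, so the lower endpoint is always $\lceil n/3\rceil$ and $O_n(S)=\min(\lfloor(2n+1)/3\rfloor,s_1)-\lceil n/3\rceil+1$; when $b\equiv 2\pmod 3$ we have $\lfloor(2n+1)/3\rfloor\le\lfloor(2G+1)/3\rfloor=2s_2+1=s_1$, so the upper endpoint is always $\lfloor(2n+1)/3\rfloor$ and $O_n(S)=\lfloor(2n+1)/3\rfloor-\max(\lceil n/3\rceil,n-s_2)+1$.

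\emph{Evaluating the two regimes.} In each case the single surviving box inequality becomes active exactly at the threshold $n^\ast=(G-1)/2=3k+\ell/2-1$: for $b\equiv 1$ the constraint $x\le s_1$ beats $x\le\lfloor(2n+1)/3\rfloor$ iff $3s_1<2n+1$, i.e. $n>n^\ast$, and for $b\equiv 2$ the constraint $x\ge n-s_2$ beats $x\ge\lceil n/3\rceil$ once $n$ is large in the same way. Using the elementary identities $\lfloor(2n+1)/3\rfloor-\lceil n/3\rceil=\lfloor n/3\rfloor$ and $s_1-\lceil n/3\rceil=\lfloor(G-n)/3\rfloor$ (valid when $G=3s_1$), respectively $\lfloor(2n+1)/3\rfloor-n+s_2=\lfloor(G-n)/3\rfloor$ (valid when $G=3s_2+1$), both cases collapse to $\lfloor n/3\rfloor+1$ for $n\le n^\ast$ and $\lfloor(G-n)/3\rfloor+1$ for $n>n^\ast$, which is the claimed formula. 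I would also check that the two expressions coincide on the transition region, so that the precise placement of the cutoff (an integer for $\ell\in\{2,4\}$, a half-integer for $\ell\in\{1,5\}$) is immaterial.

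\emph{Main obstacle.} The only real difficulty is the bookkeeping: isolating which single box inequality is active for each residue class and pinning the threshold down to exactly $n^\ast=(G-1)/2$. The observation that $(s_1,s_2)$ sits on a semigroup boundary line is what makes this clean; without it one must track both box constraints at once, and the four residues $\ell$ look genuinely different even though the resulting formula is uniform.
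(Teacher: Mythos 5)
Your proposal is correct and takes essentially the same route as the paper: both arguments fix the genus $n$, identify the oversemigroups of $\langle 3, 6k+\ell\rangle$ of that genus with the integer points of an explicit segment cut out by the $a=3$ semigroup inequalities together with the containment constraints, and count those points by a floor/ceiling analysis that switches at the threshold $n^\ast = (G-1)/2$. The differences are bookkeeping only---the paper parametrizes by the number $p$ of adjoined gaps congruent to $\ell \bmod 3$ and splits by the range of $n$, while you work directly in Ap\'ery coordinates and split by $b \bmod 3$ using the (correct) observation that the corner $(s_1,s_2)$ lies on a semigroup facet, which makes one box constraint redundant; the transition-region agreement you flag does check out, so your outline is complete.
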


Assuming Lemma \ref{overs3byG} we prove Theorem \ref{overs3}.

\begin{proof}[Proof of Theorem \ref{overs3}]
	Note that for $3k \leq n < 3k+\ell$ Lemma \ref{overs3byG} implies $O_n(S) = k+1$, so we can rewrite the expression for $O_n(S)$ as
		$$O_n(S) = \begin{cases}
			\lfloor\frac{n}{3}\rfloor+1 & 0 \leq n < 3k \\
			k+1 & 3k \leq n < 3k+\ell \\
			\left\lfloor\frac{6k+\ell-1-n}{3}\right\rfloor+1 & 3k+\ell \leq n \leq 6k+\ell-1
			\end{cases}. $$
	Now we find $O(S)$ by summing over $n$:
		$$O(S) = \sum_{n=0}^{6k+\ell-1} O_n(S) = 2\cdot3\cdot\frac{k(k+1)}{2} + \ell(k+1) = (3k+\ell)(k+1).$$
\end{proof}

We now prove the lemma through a careful consideration of Ap\'ery tuples.

\begin{proof}[Proof of Lemma \ref{overs3byG}]
	Fix $n$, and suppose $T \supseteq S$ with $g(T) = n$.
	We write $\ell = 3i+j$ where $i \in \{0,1\}$ and $j \in \{1,2\}$.  Let $m = 6k + \ell -1 - n$.  Since $m = g(T) - g(S)$ and $T \supseteq S,\ T$ is the union of $S$ together with $m$ gaps of $S$.  Let $6k+\ell-3p$ be the smallest element of $T$ that is congruent to $\ell$ modulo $3$. We see that $p \geq 0$ because $6k+\ell \in S \subseteq T$.  Since $T$ is closed under addition it include the elements $6k+\ell-3p+3t$ for all $t \geq 0$, so $T$ includes at least $p$ gaps of $S$.
	
	Since we know the smallest element of $T$ congruent to $\ell$ modulo $3$, the remaining $m-p$ elements of $T\sminus S$ are all congruent to $2\ell$ modulo $3$.
	The smallest element of $S$ congruent to $2\ell$ modulo $3$ is $12k+2\ell$, and hence the smallest element of $T$ congruent to $2\ell$ must be $12k+2\ell-3(m-p)$ to account for the correct number of gaps. Therefore, the Ap\'ery set of $T$ is $\{0,\, 6k+\ell-3p,\, 12k+2\ell-3(m-p)\}$.

	Such a numerical set $T$ is a numerical semigroup if and only if it satisfies the inequalities (\ref{ineq-semi1}) - (\ref{ineq-semi2}), which reduce to
		\begin{align*}
		2(6k+\ell-3p) &\geq 12k+2\ell-3(m-p)\\
		2(12k+2\ell-3(m-p)) &\geq 6k+\ell-3p
		\end{align*}
	which in turn give
		\begin{align}
		m &\geq 3p 		\label{ineq-m3p}\\
		6k + \ell + 3p &\geq 2m. 		\label{ineq-6k2m}
		\end{align}
	For fixed $n$ each value of $p$ gives a different numerical set $T$, and so $O_n(S)$ is equal to the number of values of $p$ satisfying both (\ref{ineq-m3p}) and (\ref{ineq-6k2m}).

	For $0 \leq n \leq 3k+\frac{\ell}{2}-1$ we have $3k+\frac{\ell}{2} \leq m \leq 6k+\ell-1$.
	Since $m = 6k+\ell-1-n$, the above inequalities can be rewritten
		\begin{align*}
		6k+\ell-1-n &\geq 3p \\
		6k+3p+\ell &\geq 12k+2\ell-2-2n
		\end{align*}
	which determine the interval
		\begin{equation} \label{ineq-p}
		\frac{6k+\ell-2-2n}{3}\ \leq\ p\ \leq\ \frac{6k+\ell-1-n}{3}.
		\end{equation}
	Since $n \leq 3k+\frac{\ell}{2}-1$, the lower bound for $p$ is greater than or equal to $0$.
	One can check that the distance between the bounds of $p$ given in (\ref{ineq-p}) is $\frac{n+1}{3}$, and by considering each case of $n$ modulo 3 one can see that there are always $\lfloor\frac{n}{3}\rfloor+1$ integers in this interval.
	Therefore, in this case $O_n(S) = \lfloor\frac{n}{3}\rfloor+1$.
	
	For $3k+\frac{\ell}{2}-1 < n \leq 6k+\ell-1$ we have $0 \leq m < 3k + \frac{\ell}{2}$.
	Because $2m < 6k+\ell \leq 6k+\ell+3p$ for any $p$, (\ref{ineq-6k2m}) holds for any $p \geq 0$.  So we need only count integer solutions to (\ref{ineq-m3p}).
	There are exactly $\lfloor{m/3}\rfloor + 1$ integers $p$ that satisfy (\ref{ineq-m3p}), so in this case $O_n(S) = \lfloor\frac{m}{3}\rfloor+1 = \left\lfloor\frac{6k+\ell-1-n}{3}\right\rfloor+1$.

\end{proof}

Theorem \ref{AndersonsTheorem} shows that for large $k$ there are about $6k^2$ simultaneous $(3,\,6k+\ell)$-cores.
From Theorem \ref{overs3} we know that about $3k^2$ of them are associated with semigroups.
Therefore, as $b$ approaches infinity, half of all $(3,b)$-cores correspond with numerical semigroups.  We give another interpretation of this result in Theorem \ref{3coreConj} in the next section.  

The case of $a = 4$, stated as Theorem \ref{overs4} in the introduction, is more complex but can be approached similarly.  We give a proof of only the case $\ell = 1$ here since the other cases are very similar.




A first approach to prove this might be to count oversemigroup by genus as we did for $a = 3$.
However, that approach does not work so nicely here; for example, the function that counts oversemigroups of $S = \langle 4,12k+1\rangle$ by genus is not unimodal. Instead, we count oversemigroups with Ap\'ery tuple $(x,n,y)$, where $n$ is fixed.  Let 
\[
O'_n(S) = \#\{T \supseteq S : T\text{ is a semigroup, } \Ap(T) = (x,n,y)\}.
\]

\begin{lem} \label{overs4byAp}
	If $S = \langle 4,\, 12k+1 \rangle$ then
		$$O'_n(S) = \begin{cases}
			(n+1)(6k - \frac{3n}{2} + 1) & 0 \leq n \leq 2k \\
			(n+1)(3k - \lceil \frac{n}{2} \rceil + 1) %
				+ \frac{1}{2}(3k - \lfloor \frac{n}{2} \rfloor)(3k - \lfloor \frac{n}{2} \rfloor + 1) & 2k < n \leq 6k
		\end{cases}.$$
\end{lem}

Using this lemma, we prove the $\ell = 1$ case of Theorem \ref{overs4}.

\begin{proof}[Proof of Theorem \ref{overs4} for $\ell =1$]
	Suppose $T$ is an oversemigroup of $S$ with $\Ap(T) = (x,n,y)$.  Since $6k\cdot 4 + 2 \in S$ we know $n \leq 6k$, which means $O(S) = \sum_{n=0}^{6k} O'_n(S)$.
	By Lemma \ref{overs4byAp} we have
	\[	\sum_{n=0}^{2k} O'_n(S)
			 =  \sum_{n=0}^{2k} (n+1)\left(6k - \frac{3}{2}n + 1\right) 
		 =  8k^3 + 14k^2 + 7k + 1 \label{overs4,n<2k},
		 \]
by a standard induction argument.

We also have
	\begin{IEEEeqnarray}{rCl}
		\sum_{n=2k+1}^{6k} O'_n(S)
			& = & \sum_{n=2k+1}^{6k} (n+1)(3k + 1)\nonumber 
				- (n+1)\left\lceil\frac{n}{2}\right\rceil + \frac{1}{2}{\Big(}3k - \left\lfloor\frac{n}{2}\right\rfloor{\Big)}\left(3k - {\Big \lfloor}\frac{n}{2}{\Big \rfloor} + 1\right) \nonumber \\
		& = & (3k + 1)(4k) + (\frac{6k(6k+1)}{2} - \frac{2k(2k+1)}{2})(3k+1) \nonumber\\
			&& - (9k^2 + 3k - k^2 - k) \nonumber \\
			&& - \frac{1}{6}\left[3k(3k+1)(24k+1) - k(k+1)(8k+1)\right] \nonumber \\
			&& + \frac{1}{2}\Big[(9k^2 + 3k)(4k) - (9k^2 - k^2) - 6k(9k^2 - k^2) \nonumber \\
				&&\qquad + \frac{1}{3}(3k(18k^2 + 1) - k(2k^2 + 1))\Big] \nonumber \\
		& = & 16k^3 + 16k^2 + 4k \label{overs4,n>2k},
	\end{IEEEeqnarray}
by a slightly more complicated induction argument.

	Adding (\ref{overs4,n<2k}) and (\ref{overs4,n>2k}) gives $O(S) = 24k^3 + 30k^2 + 11k + 1$.
	\end{proof}

We finish the argument by proving Lemma \ref{overs4byAp}.

\begin{proof}[Proof of Lemma \ref{overs4byAp}]
	Suppose $T \supseteq S$ with $\Ap(T) = (x,n,y)$.
	This Ap\'ery tuple must satisfy the inequalities (\ref{ineq-semi1}) - (\ref{ineq-over1}), which means that the following inequalities must hold:
	\begin{align}
		2x &\geq n 	\label{ineq-2xn} \\
		2y + 1 &\geq n \label{ineq-2y1n} \\
		x + n &\geq y 	\label{ineq-xny} \\
		y + n + 1 &\geq x \label{ineq-yn1x} \\
		x &\leq 3k \\
		n &\leq 6k	\label{ineq-n<6k}\\
		y &\leq 9k.
	\end{align}

	First, consider the case where $0 \leq n \leq 2k$.
	If $x \leq y$ then $x = y - c$ for some $c \in \{0,1,\ldots,n\}$.
	For any $x$ that satisfies $\lceil \frac{n}{2} \rceil \leq x \leq 3k$, the inequalities (\ref{ineq-2xn}) -- (\ref{ineq-yn1x}) are satisfied, so for each value of $c$ there are $3k - \lceil \frac{n}{2} \rceil + 1$ oversemigroups in this case.

	If $x > y$ then $x = y + c$ for some $c \in \{1, \ldots, n+1\}$.
	The above inequalities are satisfied if and only if $\lfloor \frac{n}{2} \rfloor + c \leq x \leq 3k$.
	Therefore, for each $c$ there are $3k - c - \lfloor \frac{n}{2} \rfloor + 1$ oversemigroups in this case.

	Summing over all values of $c$ for both $x \leq y$ and $x > y$, we see that
	\begin{align*}
		O'_n(S)
		&= \textstyle (n+1)(3k - \lceil\frac{n}{2}\rceil + 1) + (n+1)(3k - \lfloor\frac{n}{2}\rfloor + 1) - \frac{(n+1)(n+2)}{2}  \\
		&= \textstyle (n+1)(6k - \frac{3n}{2} + 1).
	\end{align*}

	Now consider the case where $2k < n \leq 6k$.
	If $x \leq y$ then $x = y - c$ for some $c \in \{0,1,\ldots,n\}$.
	As in the previous case, $(x,n,y)$ is a valid Ap\'ery tuple if and only if $\lceil\frac{n}{2}\rceil \leq x \leq 3k$, so for each $c$ there are $3k - \lceil\frac{n}{2}\rceil + 1$ oversemigroups in this case.

	If $x > y$ then $x = y + c$ for some $c \in \{1,2,\ldots,3k - \lfloor\frac{n}{2}\rfloor\}$.
	Again, (\ref{ineq-2xn}) -- (\ref{ineq-yn1x}) are satisfied if and only if $\lfloor\frac{n}{2}\rfloor + c \leq x \leq 3k$.
	Therefore, for each value of $c$ there are $3k - c - \lfloor\frac{n}{2}\rfloor + 1$ oversemigroups in this case.
	
	Summing over all values of $c$ for both $x \leq y$ and $x > y$, we obtain
	\begin{align*}
		O'_n(S) 
		&= \textstyle (n+1)(3k - \lceil\frac{n}{2}\rceil + 1) + (3k - \lfloor\frac{n}{2}\rfloor)(3k - \lfloor\frac{n}{2}\rfloor + 1) \\
			&\quad \textstyle - (3k - \lfloor\frac{n}{2}\rfloor + 1)(3k - \lfloor\frac{n}{2}\rfloor + 1)/2 \\
		&= \textstyle (n+1)(3k - \lceil\frac{n}{2}\rceil + 1) + (3k - \lfloor\frac{n}{2}\rfloor)(3k - \lfloor\frac{n}{2}\rfloor + 1)/2.
	\end{align*}

\end{proof}



Comparing Theorem \ref{overs4} with Theorem \ref{AndersonsTheorem}, we see that for large $k$ there are approximately $72k^3$ simultaneous $(4,\,12k+\ell)$-cores, of which about $24k^3$ are associated with semigroups.
Thus, as $b$ approaches infinity, one third of all $(4,b)$-cores correspond with numerical semigroups.

We compare the behavior of $C(a,b)$---the total number of $(a,b)$-cores---with $O(\langle a, b \rangle)$ for large values of $b$:
	\begin{center}
	\begin{tabular}{c | c}
		$a\ $ & $\ \lim_{b\to\infty}{O(\langle a, b \rangle)}/{C(a,b)}$ \\
		\hline
		$2\ $ & 1 \\
		$3\ $ & $1/2$ \\
		$4\ $ & $1/3$
	\end{tabular}.
	\end{center}
We can ask for the behavior of this ratio for larger values of $a$.  As a degree $a-1$ polynomial in $b$, the leading coefficient of $C(a,b)$ is $\frac{1}{a!}$.
 Theorem \ref{HellusTheorem} of Hellus and Waldi shows that this ratio is between $\frac{1}{(a-1)!}$ and $\frac{1}{a-1}$.
Therefore
	$$ \lim_{a\to\infty}\lim_{b\to\infty} \frac{O(\langle a, b \rangle)}{C(a,b)} \le \lim_{a\to\infty} \frac{1}{a-1} = 0 . $$
These results can be interpreted as special cases of Problem \ref{Prob1} since the leading coefficient of these quasipolynomials are closely related to the volumes of the $(a,b)$-core polytopes of the previous section.


\section{Conjugate partitions and symmetric numerical sets}

Recall that a numerical set $T$ with Frobenius number $F$ is symmetric if and only if for each $i \in [0,F]$ exactly one of $i,F-i$ is in $T$ and that the conjugate of a partition $\lambda$ is the partition $\widetilde{\lambda}$ that we get from interchanging the rows and columns of the Young diagram of $\lambda$.  Our first goal is to relate these two concepts.  We then focus on the particular case of $3$-core partitions and their conjugates.

\begin{proposition} \label{symIffSelfConj}
	A numerical set $T$ is symmetric if and only if $\varphi(T)$ is a self-conjugate partition.
\end{proposition}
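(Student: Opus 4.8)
The plan is to realize conjugation of partitions as an explicit involution on numerical sets, and then to observe that symmetry of $T$ is exactly the statement that $T$ is a fixed point of this involution. Throughout write $F=F(T)$ for the Frobenius number; recall that the number of parts of $\varphi(T)$ equals the genus $g(T)$, and that by construction the top row of $\varphi(T)$ is the vertical step labelled $F$ while the leftmost column is the horizontal step labelled $0$ (as $0\in T$).

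First I would extract from Proposition \ref{HooksAreDifferences} the two boundary hook computations I need. Specializing $t=0$ shows that the first-column hook lengths of $\varphi(T)$ are exactly the gaps $\N\sminus T$; specializing $n=F$ shows that the first-row hook lengths of $\varphi(T)$ are $\{F-t : t\in T,\ t<F\}$.

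Next comes the key step. I define a numerical set $\widehat T$ by declaring, for $i\in[0,F]$, that $i\in\widehat T$ if and only if $F-i\notin T$, together with $\widehat T\supseteq\{F+1,\to\}$. A quick check shows that $\widehat T$ is indeed a numerical set with $F(\widehat T)=F$ (since $0\in T$ forces $F\notin\widehat T$, and $F\notin T$ forces $0\in\widehat T$) and that $\widehat{\widehat T}=T$. I then claim $\varphi(\widehat T)=\widetilde{\varphi(T)}$. Because conjugation interchanges rows and columns, the first-column hook lengths of $\widetilde{\varphi(T)}$ are the first-row hook lengths of $\varphi(T)$, namely $\{F-t:t\in T,\ t<F\}$; on the other hand, the first fact applied to $\widehat T$ says that its gaps are the first-column hooks of $\varphi(\widehat T)$. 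A direct comparison identifies $\{F-t:t\in T,\ t<F\}$ with $\N\sminus\widehat T$, so $\widetilde{\varphi(T)}$ and $\varphi(\widehat T)$ have the same gap set; since a numerical set is determined by its gaps (equivalently, by injectivity of $\varphi$), they are equal.

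Finally the proposition falls out formally: $T$ is symmetric precisely when, for each $i\in[0,F]$, exactly one of $i$ and $F-i$ lies in $T$, which is the same as the condition $i\in T\iff F-i\notin T$, i.e. $\widehat T=T$. Combining this with the key step gives $T$ symmetric $\iff \widehat T=T \iff \varphi(\widehat T)=\varphi(T) \iff \widetilde{\varphi(T)}=\varphi(T)$, the last being self-conjugacy of $\varphi(T)$. I expect the only genuine obstacle to be the bookkeeping in the key step: correctly matching the first-row hooks of $\varphi(T)$ with the gaps of $\widehat T$ at the two endpoints $i=0$ and $i=F$, and confirming that $\widehat T$ really is a numerical set with the same Frobenius number. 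Once that identification is pinned down, the rest is purely formal.
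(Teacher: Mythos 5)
Your proposal is correct and is essentially the paper's own argument: your $\widehat T$ is precisely the dual numerical set $T^*$ that the paper introduces before Proposition \ref{conjRights}, your key step is exactly that proposition, and you conclude, as the paper does, by observing that symmetry of $T$ is the fixed-point condition $T = T^*$. The only difference is how the key step is verified --- the paper relabels the profile in reverse order, starting from $F$, so that the up-steps $F-u$ (for $u \notin T$) become the right-steps of $\widetilde{\varphi(T)}$, whereas you match the gap set of $\widehat T$ against the first-column hooks of $\widetilde{\varphi(T)}$ using the two boundary specializations of Proposition \ref{HooksAreDifferences}; both verifications are sound.
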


In order to prove this proposition we give a characterization of the numerical set associated to $\widetilde{\lambda}$ under the bijection $\varphi$.  The \emph{dual} of a numerical set $T$ with Frobenius number $F$ is the numerical set $T^* = \left\{u \in \Z\ :\ F-u \not\in T\right\}$.  A numerical set and its dual have the same atom monoid and it is clear that a numerical set is symmetric if and only if it is equal to its dual. For additional background on this concept, see Section 1 of \cite{Antokoletz}.  By considering pairs $i,F-i$ and whether or not they are elements of $T$ we get the following characterization of $T^*$:
\[
T^* = \{F-u\ : u \in \Z \sminus T\}.
\]


\begin{proposition} \label{conjRights}
	Suppose $T$ is a numerical set with Frobenius number $F$ and $\varphi(T) = \lambda$. The numerical set associated with $\widetilde{\lambda}$ is $T^*$.
\end{proposition}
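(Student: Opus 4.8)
The plan is to read off the complement of $\varphi^{-1}(\widetilde{\lambda})$ directly from the profile walk that defines $\varphi$. Recall that in the construction of $\varphi(T) = \lambda$ the $n$-th step of the profile (for $n \geq 0$) is vertical exactly when $n \notin T$ and horizontal exactly when $n \in T$, that every step with $n > F$ is horizontal, and that the step $n = F$ is vertical, since $F$ is the largest gap. Thus $\lambda$ is encoded by the finite word $s_0 s_1 \cdots s_F$ over the alphabet $\{\mathrm{E},\mathrm{N}\}$ (east/north), with $s_0 = \mathrm{E}$ because $0 \in T$ and $s_F = \mathrm{N}$.

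The single fact that drives everything is the standard description of conjugation on a boundary sequence: passing from $\lambda$ to $\widetilde{\lambda}$ reverses the word and interchanges horizontal and vertical steps. Writing $s_0' \cdots s_F'$ for the profile word of $\widetilde{\lambda}$, this says $s_n'$ is the swap of $s_{F-n}$ for each $n \in [0,F]$. As a consistency check, the rule forces $s_0' = \mathrm{E}$ and $s_F' = \mathrm{N}$, matching the facts that $0$ lies in every numerical set and that $\widetilde{\lambda}$ has the same maximum hook length $F$ as $\lambda$ (conjugation preserves the hook multiset). Granting this, the rest is bookkeeping: for $n \in [0,F]$ the step $s_n'$ is vertical, i.e. $n \notin \varphi^{-1}(\widetilde{\lambda})$, if and only if $s_{F-n}$ is horizontal, i.e. $F - n \in T$. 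Negating, $n \in \varphi^{-1}(\widetilde{\lambda}) \iff F - n \notin T$ for $n \in [0,F]$. For $n > F$ both sides hold automatically: such $n$ exceed the common Frobenius number and so lie in $\varphi^{-1}(\widetilde{\lambda})$, while $F - n < 0$ forces $F - n \notin T$. Comparing with $T^* = \{u \in \Z : F - u \notin T\}$, and noting that this definition already excludes negative $u$ (for then $F - u > F$ lies in $T$), we conclude $\varphi^{-1}(\widetilde{\lambda}) = T^*$, which is the claim after applying $\varphi$.

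The step I expect to be the only real obstacle is justifying the reverse-and-swap rule carefully, since the naive guess---that conjugation simply swaps $\mathrm{E}$ and $\mathrm{N}$ position by position---is false: it would send the cofinite tail of $T$ to its complement and produce no valid numerical set, and the genuine content is the order reversal. I would pin this down either by reversing the boundary word directly, or, equivalently and perhaps more cleanly, by identifying the gaps $\N \sminus T$ with the first-column hook lengths $\{\lambda_i + r - i : 1 \leq i \leq r\}$ of $\lambda$ (where $r = g(T)$ is the number of parts, immediate from the profile) and invoking the classical complementation for these beta-numbers: the first-column hook lengths of $\widetilde{\lambda}$ are exactly $\{F - c : c \in T,\ 0 \leq c \leq F\}$, the reflection $h \mapsto F - h$ of the horizontal-step positions of $\lambda$. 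Either route lands on the same identity $n \in \varphi^{-1}(\widetilde{\lambda}) \iff F - n \notin T$. I would also remark that the weaker conclusion that $\varphi^{-1}(\widetilde{\lambda})$ and $T^*$ share an atom monoid is immediate from Proposition \ref{HooksAreComplementOfAtom} together with the fact that a numerical set and its dual have the same atom monoid, but that this alone does not force equality, which is why the profile-level argument is needed.
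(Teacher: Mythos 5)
Your proof is correct and is essentially the paper's argument: the paper labels the profile of $\lambda$ in reverse order starting from $F$ and observes that the up-steps, now labeled $F-u$ for $u \notin T$, become exactly the right-steps of $\widetilde{\lambda}$, which is precisely your reverse-and-swap rule for the boundary word $s_0\cdots s_F$. Your write-up just makes the justification of that rule (and the handling of $n > F$ and negative labels) more explicit, with the beta-number complementation offered as an equivalent alternative route.
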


\begin{proof}
It is easy to see from the definition of hook length that $H(\lambda) = H(\widetilde{\lambda})$ and $F\left(\varphi^{-1}(\widetilde{\lambda})\right) = F\left(\varphi^{-1}({\lambda})\right)$.  We now label the profile of $\lambda$ in reverse order, starting with $F$ and counting down.
	The up-steps of this labeling are of the form $F - u$ for $u \notin T$ and are exactly the right steps of $\widetilde{\lambda}$.	
	\end{proof}
	
	We now use this characterization to prove Proposition \ref{symIffSelfConj}.  This is both a slight generalization of Proposition 4.4 in \cite{Rosales} and a slight reframing of Proposition 1 of \cite{Marzuola}, since it is now clear that a partition is self-conjugate if and only if the corresponding numerical set is equal to its dual.
\begin{proof}[Proof of Proposition \ref{symIffSelfConj}]
	Let $F$ be the Frobenius number of $T$.  We need only show that $T$ is symmetric if and only if $T = \{F - u: u \in \Z \sminus T\}$.  
	
	First suppose $T$ is symmetric and $u \in \Z \sminus T$. Then $F - u \in T$ by definition.  If $F - u \in T$, then $F - (F - u) = u \notin T$.
	
	Conversely, suppose $T = \{F - u: u \notin T\}$.  If $x \in T$, then $x = F - u$ for some $u \not\in T$.  Now $ u = F-x$ and we see that $T$ is symmetric.
\end{proof}




We give an example to illustrate this process.
		\begin{center}
			\begin{tikzpicture}[scale = 0.6]
			
				\begin{scope}[xshift = 0cm]
					\foreach \x in {0,1,2,3}
						{ \draw (\x,2) rectangle ++(1,1);}
					\foreach \x in {0,1}
						{ \draw (\x,1) rectangle ++(1,1);}
					\foreach \x in {0,1}
						{ \draw (\x,0) rectangle ++(1,1);}
					
					\begin{scope}[blue, ultra thick]
						\filldraw (0,0) circle [radius = 0.1];
						\draw[font = \footnotesize]
							(0,0) --
							node [below] {0} ++(1,0) --
							node [below] {1} ++(1,0) -- 
							node [right] {2} ++(0,1) --
							node [right] {3} ++(0,1) --
							node [below right] {4} ++(1,0) --
							node [below] {5} ++(1,0) --
							node [right] {6} ++(0,1.05);
					\end{scope}
					
					\node at (2,3.5) {$\lambda$};
				\end{scope}
				
				\begin{scope}[xshift = 8cm]
					\foreach \x in {0,1,2,3}
						{ \draw (\x,2) rectangle ++(1,1);}
					\foreach \x in {0,1}
						{ \draw (\x,1) rectangle ++(1,1);}
					\foreach \x in {0,1}
						{ \draw (\x,0) rectangle ++(1,1);}
					
					\begin{scope}[blue, ultra thick]
						\filldraw (0,0) circle [radius = 0.1];
						\draw[font = \footnotesize]
							(0,0) --
							node [below] {6} ++(1,0) --
							node [below] {5} ++(1,0) -- 
							node [right] {4} ++(0,1) --
							node [right] {3} ++(0,1) --
							node [below right] {2} ++(1,0) --
							node [below] {1} ++(1,0) --
							node [right] {0} ++(0,1.05);
					\end{scope}
				\end{scope}
				
				\begin{scope}[xshift = 16cm]
					\foreach \x in {0,1,2}
						{ \draw (\x,3) rectangle ++(1,1);}
					\foreach \x in {0,1,2}
						{ \draw (\x,2) rectangle ++(1,1);}
					\foreach \x in {0}
						{ \draw (\x,1) rectangle ++(1,1);}
					\foreach \x in {0}
						{ \draw (\x,0) rectangle ++(1,1);}
					
					\begin{scope}[blue, ultra thick]
						\filldraw (0,0) circle [radius = 0.1];
						\draw[font = \footnotesize]
							(0,0) --
							node [below] {0} ++(1,0) --
							node [right] {1} ++(0,1) -- 
							node [right] {2} ++(0,1) --
							node [below right] {3} ++(1,0) --
							node [below] {4} ++(1,0) --
							node [right] {5} ++(0,1) --
							node [right] {6} ++(0,1.05);
					\end{scope}
					
					\node at (1.5,4.5) {$\widetilde{\lambda}$};
				\end{scope}
				
				\begin{scope}[ultra thick, dashed,  red, ->]
					\draw (5,1) -- (7,1);
					\draw (13,1) -- (15,1);
				\end{scope}
			\end{tikzpicture}
		\end{center}

The conjugate of an $a$-core partition $\lambda$ is also an $a$-core and we have seen how Ap\'ery tuples map such partitions to $\N^{a-1}$ so it is natural to ask how conjugation acts on $\N^{a-1}$. In other words, we wish to find the Ap\'ery tuple of $\varphi^{-1}(\widetilde{\lambda})$ given the Ap\'ery tuple of $\varphi^{-1}(\lambda)$.


\begin{proposition} \label{conjugateApery}
	Suppose that $\lambda$ is a partition with corresponding numerical set $T = \varphi^{-1}(\lambda)$ with Frobenius number $F$ such that the Ap\'ery tuple of $T$ is $\Ap(T) = (x_1,\ldots,x_{a-1})$ and $F \equiv \ell \pmod{a}$.
	Then the Ap\'ery tuple of $T^* = \varphi^{-1}(\widetilde{\lambda})$ is $\Ap(T^*) = (x'_1,\ldots,x'_{a-1})$ where
		$$ x'_i =
			\begin{cases}
				x_\ell - x_{\ell - i} & i < \ell \\
				x_\ell & i = \ell \\
				x_\ell - x_{a + \ell - i} - 1 & i > \ell
			\end{cases}. $$
\end{proposition}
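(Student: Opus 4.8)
The plan is to combine Proposition~\ref{conjRights}, which identifies $\varphi^{-1}(\widetilde\lambda)$ with the dual $T^*$, with a direct computation of the Ap\'ery tuple of $T^*$. Since a numerical set and its dual have the same atom monoid, $a \in A(T^*)$, so $\Ap(T^*)$ is well-defined; I then only need to find, for each residue class $i \in [1,a-1]$, the smallest element of $T^*$ congruent to $i$ modulo $a$, which by definition equals $a x'_i + i$.

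The main idea is to read off these smallest elements from the description $T^* = \{F - u : u \in \Z \sminus T\}$. An element $F - u$ lies in class $i$ modulo $a$ exactly when $u \equiv \ell - i \pmod{a}$, and since every $u \in \Z \sminus T$ satisfies $u \le F$, minimizing the nonnegative quantity $F - u$ amounts to maximizing $u$. Hence the smallest element of $T^*$ in class $i$ equals $F$ minus the largest integer not in $T$ lying in residue class $(\ell - i) \bmod a$. For a class $j \in [1,a-1]$ this largest non-element is $a x_j + j - a$, the predecessor in its class of the smallest element $a x_j + j$ of $T$; this formula remains valid even when $x_j = 0$, in which case it is the negative number $j - a$. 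For class $0$, all nonnegative multiples of $a$ lie in $T$, so the largest non-element is $-a$ (equivalently $a x_0 + 0 - a$ with the convention $x_0 = 0$).

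The next step is to pin down $F$ itself. Because $F \equiv \ell \pmod{a}$ and $F$ is the largest gap of $T$, it must be the largest non-element in class $\ell$, giving $F = a x_\ell + \ell - a$. Substituting this together with the largest-non-element formula into $a x'_i + i = F - (\text{largest non-element in class } (\ell - i)\bmod a)$, and splitting into the three cases $i < \ell$ (where $(\ell - i)\bmod a = \ell - i$), $i = \ell$ (class $0$), and $i > \ell$ (where $(\ell - i)\bmod a = a + \ell - i$), yields after cancelling $\ell$ and collecting multiples of $a$ exactly the three expressions $x_\ell - x_{\ell - i}$, $x_\ell$, and $x_\ell - x_{a + \ell - i} - 1$ of the statement.

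I expect the only delicate point to be the bookkeeping in residue class $0$ and, more generally, the cases where some $x_j$ vanishes: there the relevant ``gap'' of $T$ is a negative integer rather than an honest gap, and one must check that the uniform expression $a x_j + j - a$ still records the correct largest non-element, so that the case analysis goes through without separate treatment. Everything else is routine modular arithmetic.
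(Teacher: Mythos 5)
Your proposal is correct and follows essentially the same route as the paper's proof: both use Proposition~\ref{conjRights} to write $ax'_i + i = F - \max\{u \notin T : u \equiv \ell - i \pmod{a}\}$, identify the largest non-element in each residue class as $ax_j + j - a$ (which is $-a$ for class $0$ and remains valid, as a negative integer, when $x_j = 0$), and substitute $F = a(x_\ell - 1) + \ell$ before splitting into the three cases. Your explicit justification of the $x_j = 0$ and class-$0$ bookkeeping is a point the paper leaves implicit (and your proposal even quietly corrects a typo in the paper's proof, which writes $u \notin S$ where $u \notin T$ is meant), but the argument is the same.
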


\begin{proof}
	Recall from Proposition \ref{conjRights} that $S = \{F - u : u \in \Z\sminus T\}$.
	Thus
		\begin{align*}
			ax'_i + i
			&= \min\{F - u : u \notin S, F - u \equiv i \pmod{a}\} \\
			&= F - \max\{u \notin S : u \equiv \ell - i \pmod{a}\}.
		\end{align*}
	By the definition of the Ap\'ery tuple,
	\[
\max\left\{u \notin S : u \equiv \ell - i \pmod{a}\right\} = 
\begin{cases} 
 a(x_{\ell-i}-1) +(\ell-i) & i < \ell \\
-a & i = \ell \\
a x_{a + \ell-i} + (\ell-i) & i > \ell
\end{cases}.
	\]		
Noting $F = a(x_{\ell}-1)+ \ell$ completes the proof.

	
\end{proof}

Proposition \ref{conjugateApery} allows us to prove a theorem unique to $3$-core partitions that relates them to numerical semigroups.

\begin{theorem}	\label{3coreConj}
	Given a $3$-core partition $\lambda$, either $\varphi^{-1}(\lambda)$ or $\varphi^{-1}(\widetilde{\lambda})$ is a numerical semigroup.
\end{theorem}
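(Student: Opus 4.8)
The plan is to push everything into the Apéry-tuple coordinates of Proposition \ref{bijections} with $a = 3$ and reduce the statement to a pair of complementary linear inequalities in $\N^2$. Write $T = \varphi^{-1}(\lambda)$ with $\Ap(T) = (x_1,x_2) \in \N^2$; the empty partition corresponds to $T = \N$, which is already a semigroup, so I may assume $\lambda \neq \emptyset$. Specializing Theorem 11 of \cite{Rosales} to $a = 3$, the only surviving instances of (\ref{ineq-semi1}) and (\ref{ineq-semi2}) are $2x_1 \ge x_2$ and $2x_2 + 1 \ge x_1$, so $T$ is a numerical semigroup precisely when both of these hold.

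Next I would pin down the residue $\ell$ of the Frobenius number. Since $F(T) = \max\{3x_1 - 2,\, 3x_2 - 1\}$ and the two candidate maxima lie in distinct residue classes mod $3$, one gets $\ell = 1$ exactly when $x_1 > x_2$ and $\ell = 2$ exactly when $x_1 \le x_2$. In each case Proposition \ref{conjugateApery} computes $\Ap(T^*)$ explicitly: for $\ell = 1$ it is $(x_1,\, x_1 - x_2 - 1)$, and for $\ell = 2$ it is $(x_2 - x_1,\, x_2)$, both entries nonnegative by the case hypothesis.

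The final step is to apply the same $a=3$ semigroup criterion to $T^*$ and observe that the conditions for $T$ and for $T^*$ are complementary. When $\ell = 1$ the inequality $2x_1 \ge x_2$ holds automatically (as $2x_1 \ge x_1 > x_2$), so $T$ is a semigroup iff $x_1 \le 2x_2 + 1$; substituting $(x_1,\, x_1 - x_2 - 1)$ into the criterion, one inequality is automatic and the other reduces to $x_1 \ge 2x_2 + 1$, so $T^*$ is a semigroup iff $x_1 \ge 2x_2 + 1$. These two conditions exhaust $\N^2$. The case $\ell = 2$ is symmetric: $2x_2 + 1 \ge x_1$ is now automatic, so $T$ is a semigroup iff $x_2 \le 2x_1$, while feeding $(x_2 - x_1,\, x_2)$ into the criterion shows $T^*$ is a semigroup iff $x_2 \ge 2x_1$, again covering every tuple. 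Hence in either branch at least one of $\varphi^{-1}(\lambda), \varphi^{-1}(\widetilde{\lambda})$ is a numerical semigroup.

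I do not expect a genuine obstacle; the entire content is in choosing the right coordinates, after which the verification is a short comparison of linear inequalities. The only points demanding care are the bookkeeping for the Frobenius residue and selecting the correct branch of Proposition \ref{conjugateApery}, together with the degenerate case $T = \N$ where the Frobenius number is undefined. It is worth emphasizing that the phenomenon is special to $a = 3$: the argument works precisely because in each of the two cases one of the two defining semigroup inequalities becomes automatic, which is exactly what makes the semigroup loci of $T$ and $T^*$ partition $\N^2$.
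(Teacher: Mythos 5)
Your proposal is correct and follows essentially the same route as the paper's proof: both work in Ap\'ery-tuple coordinates, reduce the semigroup condition to the two inequalities $2x_1 \ge x_2$ and $2x_2 + 1 \ge x_1$, and apply Proposition \ref{conjugateApery} to compute $\Ap(T^*)$ as $(x_2 - x_1,\, x_2)$ or $(x_1,\, x_1 - x_2 - 1)$ before verifying the criterion for the dual. The only difference is organizational---you split on the Frobenius residue $\ell$ and exhibit the two semigroup loci as complementary half-planes covering $\N^2$, whereas the paper splits on which of the two inequalities fails---but the underlying computation is identical.
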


\begin{proof}
	Let $T = \varphi^{-1}(\lambda)$ and $S = \varphi^{-1}(\widetilde{\lambda})$.
	Suppose that $\Ap(T) = (x_1,x_2)$.
	Recall that $T$ is a numerical semigroup if and only if it satisfies the inequalities (\ref{ineq-semi1}) - (\ref{ineq-semi2}), which here reduce to
		\begin{align}
		2x_1 \geq x_2 \label{ineq-3core-semi1}\\
		2x_2 + 1 \geq x_1 \label{ineq-3core-semi2}.
		\end{align}
	Notice that at least one of these must be true.
	
	If (\ref{ineq-3core-semi1}) fails, then $x_1 < x_2$, and so by Proposition \ref{conjugateApery}, $\Ap(S) = (x_2 - x_1, x_2)$.
	Using the fact that $2x_1 < x_2$ and $2x_2 + 1 \geq x_1$, we see that (\ref{ineq-semi1}) and (\ref{ineq-semi2}) are both satisfied for $S$, and hence $S$ is a numerical semigroup.
	
	If instead (\ref{ineq-3core-semi2}) fails, then $x_1 > x_2$, so by Proposition \ref{conjugateApery}, $\Ap(S) = (x_1, x_1 - x_2 - 1)$.
	As before, we use the fact that $2x_1 \geq x_2$ and $2x_2 + 1 < x_1$ to show that (\ref{ineq-semi1}) and (\ref{ineq-semi2}) are satisfied for $S$.
	So $S$ is again a numerical semigroup.
	
\end{proof}

From Theorem \ref{overs3} we know the number of numerical semigroups containing $\langle 3, 6k + \ell \rangle$, so using Theorem \ref{3coreConj} we can determine the number of these semigroups that are symmetric.
A symmetric numerical semigroup is sent to a self-conjugate partition under $\varphi$, so this number is also equal to the number of self-conjugate $(3, 6k + \ell)$-core partitions associated to numerical semigroups.

\begin{cor} \label{symOvers3}
	The number of symmetric numerical semigroups containing $\langle 3, 6k+\ell \rangle$ is
		$$3k + \frac{3\ell}{2} - \frac{\ell^2}{6} - \frac{1}{3}.$$
\end{cor}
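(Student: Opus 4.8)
The plan is to count the symmetric numerical semigroups containing $S = \langle 3, 6k+\ell\rangle$ by combining Theorem \ref{overs3}, which gives the total number $O(S) = (3k+\ell)(k+1)$ of oversemigroups, with Theorem \ref{3coreConj}, which says that conjugation pairs up these oversemigroups in a very structured way. The key observation is that $\varphi$ sends symmetric numerical semigroups to self-conjugate partitions (Proposition \ref{symIffSelfConj}), and that conjugation $\lambda \mapsto \widetilde\lambda$ is an involution on $3$-core partitions. So I would think of conjugation as acting on the set of $O(S)$ oversemigroups, and I want to count its fixed points.

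First I would set up the involution carefully. Since both $a=3$ and $b=6k+\ell$ are preserved under conjugation (the hook set is unchanged, and $3,6k+\ell$ lie in the atom monoid of both $\varphi^{-1}(\lambda)$ and $\varphi^{-1}(\widetilde\lambda)$), conjugation restricts to an involution on the set of $(3,6k+\ell)$-cores. Theorem \ref{3coreConj} tells me that for each $3$-core $\lambda$, at least one of $\varphi^{-1}(\lambda), \varphi^{-1}(\widetilde\lambda)$ is a semigroup. However, conjugation does \emph{not} preserve the property of being a semigroup: the conjugate of a semigroup need not be a semigroup. The cleaner way to organize the count is to use the standard orbit-counting identity: if $N$ is the number of self-conjugate semigroups among the oversemigroups and we understand how the non-self-conjugate ones pair up, then $N$ is determined by $O(S)$ and a parity/pairing argument. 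The symmetric (self-conjugate) semigroups are exactly the fixed points of conjugation that happen to be semigroups, and by Proposition \ref{symIffSelfConj} a self-conjugate semigroup corresponds to a symmetric numerical set, hence to a symmetric numerical semigroup.

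The concrete approach I favor is to work directly with Ap\'ery tuples $(x_1,x_2)\in\N^2$ and the explicit conjugation formula of Proposition \ref{conjugateApery}. An oversemigroup $T$ of $S$ is self-conjugate precisely when $\Ap(T) = \Ap(T^*)$, which via Proposition \ref{conjugateApery} becomes an explicit pair of equations in $x_1, x_2$ depending on $\ell$ (through $F \equiv \ell \pmod 3$); I would solve these, intersect with the defining inequalities (\ref{ineq-semi1})--(\ref{ineq-over1}) for being an oversemigroup of $\langle 3, 6k+\ell\rangle$, and count the integer solutions. This reduces the corollary to counting lattice points on a line segment, which yields a formula linear in $k$ with an $\ell$-dependent constant. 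I expect the answer $3k + \tfrac{3\ell}{2} - \tfrac{\ell^2}{6} - \tfrac13$ to emerge after checking the four residues $\ell\in\{1,2,4,5\}$ separately and verifying the quadratic-in-$\ell$ interpolation. A cross-check: Theorem \ref{3coreConj} implies every $3$-core is a semigroup or has a semigroup conjugate, so the self-conjugate semigroups are in bijection with self-conjugate $(3,6k+\ell)$-cores, and I can independently count the latter to confirm.

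The main obstacle will be bookkeeping the residue class of the Frobenius number $F$ under conjugation. The formula in Proposition \ref{conjugateApery} depends on $\ell = F \bmod a$, and $F$ itself depends on which oversemigroup $T$ is chosen, not just on $S$; so the self-conjugacy equations $\Ap(T) = \Ap(T^*)$ take different forms depending on whether $x_1 > x_2$, $x_1 = x_2$, or $x_1 < x_2$, and on the residue of $F = 3(\max(x_1,x_2)-1) + (\text{its index})$. Disentangling these cases and confirming that the floor/ceiling contributions assemble into the stated closed form $3k + \tfrac{3\ell}{2} - \tfrac{\ell^2}{6} - \tfrac13$ for all four admissible values of $\ell$ is where the real care is needed; the rest is routine summation.
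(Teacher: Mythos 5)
Your proposal is correct, but your favored route is genuinely different from the paper's. The paper's proof is a three-line double count: by Theorem \ref{3coreConj}, every self-conjugate $(3,6k+\ell)$-core comes from a symmetric oversemigroup and every non-self-conjugate conjugate pair $\{\lambda,\widetilde\lambda\}$ contains at least one; hence the number of symmetric oversemigroups is $2\,O(\langle 3,6k+\ell\rangle)-C(3,6k+\ell)$, evaluated via Theorem \ref{overs3} and Anderson's Theorem \ref{AndersonsTheorem}. That is precisely the pairing you sketch in your second paragraph; note that to turn it into an identity rather than the inequality $2O \ge C + N$ you need the fact (implicit in the paper) that $T$ and $T^*$ have the same atom monoid, so if both were semigroups then $T = A(T) = A(T^*) = T^*$ --- i.e., each non-self-conjugate pair contributes \emph{exactly} one oversemigroup. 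Your concrete approach instead solves $\Ap(T) = \Ap(T^*)$ via Proposition \ref{conjugateApery}, and it does go through; indeed the bookkeeping you worry about collapses for $a=3$: either $x_1 > x_2$, so $F \equiv 1 \pmod 3$ and self-conjugacy reads $x_1 = 2x_2+1$, or $x_1 \le x_2$, so $F \equiv 2 \pmod 3$ and it reads $x_2 = 2x_1$ (the boundary $x_1 = x_2$ lies in the second family and forces $(0,0)$, i.e.\ $T = \N$). Both one-parameter families $(2t+1,\,t)$ and $(t,\,2t)$ satisfy (\ref{ineq-3core-semi1})--(\ref{ineq-3core-semi2}) automatically, so only the containment constraint remains: writing $6k+\ell = 3k_0 + \ell_0$ with $\ell_0 \in \{1,2\}$, it is $x_{\ell_0} \le k_0$, and counting $t$ for $\ell = 1,2,4,5$ gives $3k+1$, $3k+2$, $3k+3$, $3k+3$ respectively, matching $3k + \tfrac{3\ell}{2} - \tfrac{\ell^2}{6} - \tfrac{1}{3}$. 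Comparing the two: the paper's argument is shorter but consumes both Theorem \ref{overs3} and Anderson's formula as inputs, while yours needs neither and yields an explicit parametrization of all symmetric oversemigroups of $\langle 3,b\rangle$, at the cost of residue-by-residue checking; one caveat is that your proposed ``cross-check'' by independently counting self-conjugate $(3,6k+\ell)$-cores is essentially the same computation by Theorem \ref{3coreConj}, so the genuinely independent verification is the paper's $2O - C$ identity.
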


\begin{proof}
	By Theorem \ref{3coreConj}, for any $(3,6k+\ell)$-core partition $\lambda$, either $\varphi^{-1}(\lambda)$ or $\varphi^{-1}(\widetilde{\lambda})$ is a semigroup.
	Therefore if we double count the number oversemigroups of $S$ we will have counted every non-self-conjugate $(3,6k+\ell)$-core exactly once, and we will have counted the number of self-conjugate $(3,6k+\ell)$-cores twice.
	Therefore, the number of self-conjugate $(3,6k+\ell)$-core partitions, which is the same as the number of symmetric oversemigroups of $S$ by Theorem \ref{3coreConj}, is
		$$2 \cdot O(\langle 3, 6k+\ell \rangle) - C(3,6k+\ell) = 3k + \frac{3\ell}{2} - \frac{\ell^2}{6} - \frac{1}{3}.$$
	
\end{proof}


\section{Counting partitions with a given hook set} \label{sec-P(S)}

In much of this paper we have studied statistical questions about distribution of sizes of the finite set of simultaneous $(a,b_1,\ldots, b_m)$-core partitions.  In this section we turn towards another finite collection of partitions, those which have the same hook set.  By Proposition \ref{HooksAreComplementOfAtom} the hook set of any partition is the complement of some numerical semigroup $S$.  Our goal is to understand the set of partitions sharing a given hook set and what properties of the underlying semigroup influence the size of this set.
Therefore, we rephrase this question as: Given a numerical semigroup $S$, for how many partitions $\lambda$ is $H(\lambda) = \N \sminus S$?
We call this number $P(S)$.  By our discussion of the bijection $\varphi$ in Section \ref{SetsPartitions}, this is equivalent to counting the number of numerical sets with atom monoid $S$.

This problem has been considered by Marzuola and Miller in \cite{Marzuola} where they call it the \emph{Anti-Atom Problem}.  They give constraints on numerical sets sharing the same atom monoid $S$ in terms of the dual numerical set $S^*$.

\begin{proposition}[Proposition 1 in \cite{Marzuola}]\label{MMProp1}
Suppose that $S$ is a numerical semigroup and that $T$ is a numerical set with $A(T) = S$.  Then $S \subseteq T \subseteq S^*$.
\end{proposition}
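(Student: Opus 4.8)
The plan is to establish the two inclusions $S \subseteq T$ and $T \subseteq S^*$ separately, the first being essentially immediate and the second requiring a preliminary identification of the relevant Frobenius number.

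For the inclusion $S \subseteq T$, I would simply invoke the fact, already noted in the excerpt, that $A(T) \subseteq T$ whenever $0 \in T$: any $n \in A(T)$ satisfies $n = n + 0 \in n + T \subseteq T$. Since $S = A(T)$ by hypothesis, this gives $S \subseteq T$ with no further work.

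The substance lies in showing $T \subseteq S^*$. The first key step is to pin down which Frobenius number appears in the definition $S^* = \{u \in \Z : F(S) - u \notin S\}$. I would observe that $F(A(T)) = F(T)$: the remark preceding the statement, that $F(T) \notin A(T)$ while every integer exceeding $F(T)$ lies in $A(T)$, says precisely that $F(T)$ is the largest gap of $A(T) = S$. Write $F := F(T) = F(S)$. Now take $t \in T$; the goal is to show $F - t \notin S$, which is exactly the assertion $t \in S^*$. If $t > F$ then $F - t < 0$, hence $F - t \notin S \subseteq \N$ and $t \in S^*$ trivially. Otherwise I argue by contradiction: if $F - t \in S = A(T)$, then $(F - t) + T \subseteq T$, and since $t \in T$ this forces $(F - t) + t = F \in T$, contradicting the fact that $F = F(T)$ is a gap of $T$. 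Hence $F - t \notin S$, so $t \in S^*$, and the inclusion follows.

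The main obstacle is the bookkeeping around Frobenius numbers: the dual $S^*$ is defined relative to $F(S)$, and the whole argument hinges on recognizing that this coincides with $F(T)$, so that the membership $F \notin T$ can be fed into the contradiction. Once that identification is in place both inclusions are short, and the heart of the matter is the single observation that $n \in A(T)$ together with $t \in T$ yields $n + t \in T$.
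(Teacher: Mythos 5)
Your proof is correct, but it is worth noting that the paper never actually proves this statement: it is quoted as Proposition 1 of Marzuola--Miller, and the authors only remark that the description $T^* = \{F-u : u \in \Z\sminus T\}$ (given just before Proposition \ref{conjRights}) ``also gives a way to prove this fact in terms of partitions with a given hook set.'' Your argument is a direct, self-contained verification from the definitions, and it is sound at every step: $S\subseteq T$ is immediate from $A(T)\subseteq T$; the identification $F(S)=F(T)$ is exactly the paper's observation that $F(T)\notin A(T)$ while every $n>F(T)$ lies in $A(T)$; and the contradiction $F-t\in A(T),\ t\in T \Rightarrow F\in T$ correctly handles the range $t\le F$, with $t>F$ dispatched trivially. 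The route hinted at in the paper would instead exploit the duality machinery: since a numerical set and its dual have the same atom monoid, $S=A(T)=A(T^*)\subseteq T^*$, and applying the order-reversing involution $U\mapsto U^*$ (legitimate here because $F(T)=F(T^*)=F(S)$) converts $S\subseteq T^*$ into $T\subseteq S^*$. That version buys elegance by reusing results the paper has already set up (the reversed-labeling description of $T^*$, equivalently conjugation of partitions), whereas yours buys independence from that machinery---in particular, your proof does not presuppose $A(T^*)=A(T)$, which itself requires an argument of essentially the computational kind you give. The only cosmetic omission is the degenerate case $T=\N$ (equivalently $S=\N$), where $F(T)$ is undefined; since $1\in A(T)$ forces $T=\N$, this case is vacuous under the standing convention that the dual is defined for sets with nonempty complement, and the paper ignores it too.
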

We note that the description of $T^*$ given directly above Proposition \ref{conjRights} also gives a way to prove this fact in terms of partitions with a given hook set.  

In cases where the gap between $S$ and $S^*$ is well-understood this result gives a strong characterization of the numerical sets with atom monoid $S$.  A numerical semigroup $S$ is \emph{pseudosymmetric} if $F(S)$ is even and for every $i \in [0,F(S)/2)$ exactly one of $i, F(S)-i$ is in $S$. 
\begin{cor}[Corollary 2 in \cite{Marzuola}]\label{MMcor2}
A numerical monoid $S$ with Frobenius number $F$ is symmetric if and only if there is just one numerical set (which must be $S$ itself) whose atom monoid is $S$.  Equivalently, $P(S) = 1$ if and only if $S$ is symmetric.

If $S$ is a pseudosymmetric numerical semigroup then there are precisely two numerical sets (which must be $S$ and $S^*$) whose atom monoid is $S$.  Equivalently, if $S$ is pseudosymmetric then $P(S) = 2$.
\end{cor}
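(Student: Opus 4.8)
The plan is to reduce both statements to the sandwich inequality of Proposition~\ref{MMProp1}, which says that every numerical set $T$ with $A(T)=S$ lies between $S$ and its dual $S^*$, together with two facts already available in the excerpt: first, that a numerical set and its dual have the same atom monoid, so that $A(S^*)=A(S)=S$ and $S^*$ is always a numerical set with atom monoid $S$; and second, that a numerical set equals its dual precisely when it is symmetric, as observed just before Proposition~\ref{conjRights}. Since $A(S)=S$ for a numerical semigroup, we always have $P(S)\ge 1$, witnessed by $S$ itself.

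For the first statement I argue as follows. If $S$ is symmetric then $S=S^*$, so the sandwich $S\subseteq T\subseteq S^*$ collapses and forces $T=S$; hence $P(S)=1$. Conversely, if $S$ is not symmetric then $S\neq S^*$, and $S^*$ is then a numerical set with $A(S^*)=S$ that is distinct from $S$, so $P(S)\ge 2$. This establishes $P(S)=1$ if and only if $S$ is symmetric, and shows moreover that the unique such set is $S$.

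For the pseudosymmetric case the key computation is to show $S^*\sminus S=\{F/2\}$. I would first note that $F/2\notin S$: otherwise $F=(F/2)+(F/2)\in S$ by closure under addition, contradicting that $F$ is the Frobenius number. Then I unwind the description $S^*=\{F-u : u\in\Z\sminus S\}$ to see that $u\in S^*\sminus S$ exactly when both $u$ and $F-u$ are gaps of $S$. Negative $u$ are ruled out because then $F-u>F$ lies in $S$; the value $u=F$ is ruled out because $F-u=0\in S$; and for $u\in[1,F-1]$ with $u\neq F/2$ the set $\{u,F-u\}$ is a genuine two-element pair, on which the pseudosymmetric hypothesis forces exactly one member into $S$, so the two cannot both be gaps. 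The only surviving value is $u=F/2$, which does lie in $S^*\sminus S$ since $F/2\notin S$. Hence $S^*=S\cup\{F/2\}$.

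With $\lvert S^*\sminus S\rvert=1$ in hand the conclusion is immediate: any $T$ with $A(T)=S$ satisfies $S\subseteq T\subseteq S^*$ by Proposition~\ref{MMProp1}, and the only sets in this interval are $S$ and $S^*=S\cup\{F/2\}$. Both have atom monoid $S$ (the former because $S$ is a semigroup, the latter by the duality fact), so exactly these two occur and $P(S)=2$. I expect the main obstacle to be the boundary bookkeeping in the computation of $S^*\sminus S$---in particular, correctly handling the self-paired index $i=F/2$ and deducing $F/2\notin S$, since the stated definition of pseudosymmetric quantifies only over $i\in[0,F/2)$ and leaves the status of $F/2$ to be inferred.
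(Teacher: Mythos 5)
Your proposal is correct and takes essentially the same route as the paper, which deduces the corollary from the sandwich $S \subseteq T \subseteq S^*$ of Proposition~\ref{MMProp1} together with Lemma~\ref{SunionMPs} ($S^* = S \cup M(S)$) and the resulting bound $P(S) \le 2^{|M(S)|}$ of Corollary~\ref{PSBound}; your inline computation that $S^* \sminus S = \{F/2\}$ in the pseudosymmetric case is exactly the specialization of that lemma, since $M(S)$ is empty for symmetric $S$ and is the single degenerate pair $\{F/2\}$ for pseudosymmetric $S$. Your explicit handling of the lower bounds (non-symmetric implies $S \neq S^*$ with $A(S^*)=S$, so $P(S)\ge 2$) fills in a step the paper leaves implicit, but it rests on the same duality facts the paper records before Proposition~\ref{conjRights}.
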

We note that the first part of the corollary is equivalent to Proposition \ref{symIffSelfConj}.  As we will see below, the converse of the second statement does not hold and it seems difficult to give a complete classification of numerical semigroups $S$ with $P(S) = 2$.

We give a bound for $P(S)$ in terms of how far away $S$ is from being symmetric.  A \emph{missing pair} of $S$ is a pair of elements $i,F(S)-i$ with $i \le F(S) - i$ such that neither element is in $S$. Note that when $F(S)$ is even we have the degenerate missing pair consisting of the single element $F(S)/2$.  Let $M(S)$ denote the union of the set of missing pairs of $S$.



\begin{lem}\label{SunionMPs}
For a numerical semigroup $S$ we have $S^* = S \cup M(S)$.
\end{lem}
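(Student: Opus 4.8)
The plan is to prove the set equality $S^* = S \cup M(S)$ by double inclusion, working directly from the definition $S^* = \{u \in \Z : F - u \notin S\}$ where $F = F(S)$. Before the main case analysis I would record two boundary observations that streamline everything: first, for $u < 0$ we have $F - u > F$, so $F - u \in S$ and hence $u \notin S^*$, showing $S^* \subseteq \N$; second, for $u > F$ we have $u \in S$ automatically, so such $u$ lie in both $S^*$ and $S \cup M(S)$ trivially. This lets me restrict attention to $u \in [0, F]$.

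For the inclusion $S \cup M(S) \subseteq S^*$ I would handle the two pieces separately. That $M(S) \subseteq S^*$ is immediate: if $u \in M(S)$ then by definition $F - u \notin S$, which is exactly the membership condition for $S^*$. For $S \subseteq S^*$, take $s \in S$ with $s \le F$ (the case $s > F$ is covered above); if $F - s$ were also in $S$, then closure of $S$ under addition would give $F = s + (F - s) \in S$, contradicting that $F$ is the Frobenius number, so $F - s \notin S$ and thus $s \in S^*$.

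For the reverse inclusion $S^* \subseteq S \cup M(S)$, I would take $u \in S^*$ with $u \notin S$ and show $u \in M(S)$. By the boundary observations $u \in [0,F]$. Since $u \in S^*$ we have $F - u \notin S$, and by assumption $u \notin S$; thus both members of the pair $\{u, F - u\} \subseteq [0,F]$ are gaps of $S$, so $u$ belongs to a missing pair and hence to $M(S)$.

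The argument is essentially a matter of unwinding definitions, so I do not anticipate a serious obstacle. The only points requiring care are the correct interpretation of $M(S)$ as the union over \emph{all} missing pairs (so that it contains both $u$ and $F-u$, not merely the smaller representative singled out by the condition $i \le F-i$), and the degenerate case where $F$ is even and $u = F - u = F/2$; the ``both members are gaps'' criterion accommodates both points uniformly.
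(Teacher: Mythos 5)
Your proof is correct and takes essentially the same route as the paper's: a double-inclusion argument that simply unwinds the definition of the dual, with the same case split on whether $u \in S$ for the inclusion $S^* \subseteq S \cup M(S)$ and the same observation that both members of a missing pair satisfy the defining condition of $S^*$. The one difference is that you explicitly verify $S \subseteq S^*$ via additive closure (if $F - s \in S$ then $F = s + (F-s) \in S$, a contradiction), a step the paper leaves implicit since it follows from Proposition \ref{MMProp1} with $T = S$; your boundary bookkeeping for $u \notin [0,F]$ and the degenerate element $F/2$ is likewise careful but does not change the substance of the argument.
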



\begin{proof}
Let $F$ be the Frobenius number of $S$, which is also the Frobenius number of $S^*$.  We need only consider elements less than $F$.  We first recall that 
\[
S^* = \{F-u\ : u \in \Z \sminus S\}.
\]
If $n,F-n$ is a missing pair of $S$ then $F - n \in S^*$ since $n \notin S$, and $n = F - (F - n) \in S^*$ as $F - n \notin S$.  Therefore $M(S) \subset S^*$.
	
	For the reverse inclusion, suppose that $n = F - u \in S^*$, where $u \in \N \sminus S$.  If $n \notin S$ then $u,n \in M(S)$.  If $n \in S$ then $u = F-n \not\in S^*$.  We conclude that $S^* = S \cup M(S)$.
\end{proof}

We could replace every instance of $M(S)$ with $S^*\sminus S$ but we choose to keep the notation of missing pairs since it is more descriptive.

\begin{cor}\label{PSBound}
For a numerical semigroup $S,\ P(S) \le 2^{|M(S)|}$.
\end{cor}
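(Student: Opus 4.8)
The goal is to bound $P(S)$, the number of numerical sets $T$ with atom monoid $A(T) = S$, by $2^{|M(S)|}$. The plan is to combine Proposition \ref{MMProp1} with Lemma \ref{SunionMPs}. By Proposition \ref{MMProp1}, any numerical set $T$ with $A(T) = S$ must satisfy $S \subseteq T \subseteq S^*$. By Lemma \ref{SunionMPs}, we know $S^* = S \cup M(S)$, so the elements on which $T$ has any freedom are exactly those in $M(S) = S^* \sminus S$. Every element of $S$ must lie in $T$ (as $S \subseteq T$), and no element outside $S^*$ may lie in $T$ (as $T \subseteq S^*$); the only ``undetermined'' coordinates are the elements of $M(S)$.

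\begin{proof}
By Proposition \ref{MMProp1}, any numerical set $T$ with $A(T) = S$ satisfies $S \subseteq T \subseteq S^*$. By Lemma \ref{SunionMPs} we have $S^* = S \cup M(S)$, and this union is disjoint since $M(S) = S^* \sminus S$ consists precisely of the elements of missing pairs, none of which lie in $S$. Thus $T$ is determined by which elements of $M(S)$ it contains: every element of $S$ lies in $T$, every element outside $S^*$ lies outside $T$, and the membership of each element of $M(S)$ may be chosen. Hence $T$ is specified by a subset of $M(S)$, namely $T \cap M(S)$. There are at most $2^{|M(S)|}$ such subsets, so the number of numerical sets $T$ with $A(T) = S$ is at most $2^{|M(S)|}$; that is, $P(S) \le 2^{|M(S)|}$.
\end{proof}

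**The main point and the expected obstacle.**
The essential content is entirely supplied by the two prior results, so the argument is short: the inclusion $S \subseteq T \subseteq S^*$ pins down $T$ on $S$ and on the complement of $S^*$, leaving freedom only on $M(S)$, and counting subsets of $M(S)$ gives the bound. There is no real obstacle here; this is a counting corollary rather than a theorem requiring new ideas. The one subtlety worth being careful about is the degenerate missing pair $\{F(S)/2\}$ when $F(S)$ is even: since $M(S)$ is defined as the \emph{union} of the sets of missing pairs, the single element $F(S)/2$ contributes one element to $M(S)$ and hence a factor of $2$ to the bound, which is consistent with treating every element of $M(S)$ as an independent binary choice. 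No constraint forcing certain elements of $M(S)$ in or out is used—that is exactly why this is an upper bound rather than an equality, and why the bound is generally not tight (the elements of $M(S)$ are not freely choosable in reality, since $T$ must still have atom monoid exactly $S$).
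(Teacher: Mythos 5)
Your proof is correct and follows essentially the same route as the paper: by Proposition \ref{MMProp1} any $T$ with $A(T)=S$ satisfies $S \subseteq T \subseteq S^*$, and by Lemma \ref{SunionMPs} this means $T$ is the union of $S$ with a subset of $M(S)$, giving at most $2^{|M(S)|}$ possibilities. Your additional remarks---that the degenerate pair $\{F(S)/2\}$ contributes one element and that no constraint forces elements of $M(S)$ in or out, which is why this is only an upper bound---are accurate elaborations of the same one-line argument the paper gives.
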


\begin{proof}
A numerical semigroup $T$ with hook set $\N \sminus S$ is the union of $S$ with some subset of $M(S)$.
\end{proof}
Since $M(S)$ is empty for a symmetric semigroup and consists of a single element for a pseudosymmetric semigroup, this gives another proof of Corollary \ref{MMcor2}.

Now that we understand semigroups for which $|M(S)| \le 1$, we consider those for which $|M(S)| = 2$. 
\begin{proposition}
	For a numerical semigroup $S$ with $|M(S)| = 2,\ P(S) \in \{2,3\}$.
\end{proposition}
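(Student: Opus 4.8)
The plan is to reduce the count $P(S)$ to at most four explicit candidate sets and then rule out exactly one of them. First I would unpack the hypothesis $|M(S)| = 2$. Writing $F = F(S)$, a genuine missing pair $\{i, F-i\}$ with $i < F-i$ contributes two elements to $M(S)$, while the degenerate missing pair (possible only when $F$ is even, namely the single element $F/2$) contributes one. Since $2$ cannot be obtained by combining a degenerate pair with genuine ones, the hypothesis forces $M(S)$ to consist of exactly one genuine missing pair, so $M(S) = \{i, F-i\}$ with $i < F/2$ (and $F/2 \in S$ if $F$ is even). By Proposition \ref{MMProp1} together with Lemma \ref{SunionMPs}, any numerical set $T$ with $A(T) = S$ satisfies $S \subseteq T \subseteq S^* = S \cup \{i, F-i\}$, so $T$ is one of the four sets $S$, $S \cup \{i\}$, $S \cup \{F-i\}$, $S^*$. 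This is exactly the bound $P(S) \le 2^{|M(S)|} = 4$ of Corollary \ref{PSBound}.

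Next I would establish the lower bound $P(S) \ge 2$. The set $S$ is a semigroup, so $A(S) = S$, and since a numerical set and its dual have the same atom monoid, $A(S^*) = A(S) = S$. Because $M(S) \neq \emptyset$ we have $S \neq S^*$, so these are two distinct numerical sets with atom monoid $S$.

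The crux is to show that $T_2 := S \cup \{F-i\}$ never has atom monoid $S$, which immediately upgrades the upper bound to $P(S) \le 3$. Set $w = F-i$ and suppose toward a contradiction that $A(T_2) = S$. Then $S \subseteq A(T_2)$, and since $w \in T_2$, each $s \in S$ gives $s + w \in T_2$; that is, $w + S \subseteq T_2$. On the other hand, the asymmetry $i < F/2$ yields $w = F-i > F/2$, hence $2w > F$, and every integer exceeding $F$ lies in $S$, so $2w \in S \subseteq T_2$. Combining these, $w + T_2 = (w + S) \cup \{2w\} \subseteq T_2$, so by definition $w \in A(T_2)$. But $w = F-i \notin S$, contradicting $A(T_2) = S$. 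Therefore $A(T_2) \neq S$, so $T_2$ is not counted by $P(S)$, and only $S$, $S \cup \{i\}$, and $S^*$ remain as possibilities, giving $P(S) \le 3$.

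Together the two bounds give $P(S) \in \{2,3\}$. The main obstacle — and the whole point of the argument — is the asymmetry between the two elements of the missing pair: the doubling bound $2w > F$ forcing $2w \in S$ is available only for the larger element $F-i$, whereas for the smaller element $i$ one has $2i < F$, so $2i$ may or may not lie in $S$. This is precisely why $S \cup \{i\}$ can genuinely have atom monoid $S$ (for instance one can check this occurs for $S = \langle 4,5,6,7\rangle$, with $F=3$ and $M(S)=\{1,2\}$, yielding $P(S)=3$), so that both values in $\{2,3\}$ are attained; but for the proposition itself only the two bounds above are needed.
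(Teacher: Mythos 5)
Your proof is correct and follows essentially the same route as the paper: the upper bound $P(S)\le 4$ via Proposition \ref{MMProp1}, Lemma \ref{SunionMPs}, and Corollary \ref{PSBound}, the lower bound from $S\neq S^*$, and the elimination of $S\cup\{F-i\}$ using the key inequality $2(F-i)>F$, which forces $2(F-i)\in S$. The only difference is presentational: you show directly that $F-i$ stabilizes $S\cup\{F-i\}$ and hence would lie in its atom monoid, whereas the paper reaches the same contradiction contrapositively by picking a witness $n\in S\cup\{F-i\}$ with $n+(F-i)\notin S\cup\{F-i\}$ and ruling out $n=F-i$.
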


\begin{proof}
	Let $F$ be the Frobenius number of $S$ and $a,F-a$ be the missing pair of $S$ where $a< F/2$.  Since $S$ is not symmetric, $P(S) \ge 2$.  By Corollary \ref{PSBound} we need only show that $P(S) \neq 4$. By the lemma above we need only show that $A\left(S \cup \{F-a\}\right) \neq S$.
	
	We argue by contradiction.  Suppose $A\left(S \cup \{F-a\}\right) = S$.  Since $F-a \not\in A(S \cup \{F-a\})$ there is some $n \in S \cup \{F-a\}$ such that $n + F-a \not\in S$.  Since $F-a > F/2$ we cannot have $n = F-a$.  So $n \in S$ and $n \not\in A\left(S \cup \{F-a\}\right)$, which is a contradiction.
\end{proof}
We note that both cases $P(S) =2$ and $P(S) =  3$ are possible.  For example, $S = \{0,4,\to\}$ has $A(S \cup \{1\}) = A(S \cup\{1,2\}) = S$ and $P(S) = 3$, and $S = \{0,3,6,\to\}$ has $M(S) = \{1,4\}$ and $P(S) = 2$.

Corollary \ref{PSBound} shows that if $|M(S)|$ is small then $P(S)$ is small.  We give a family of semigroups showing that the converse does not necessarily hold.
\begin{proposition} \label{missingPairFailure}
	For odd $N \in \N$ with $N \ge 11$, let $R_N$ be the numerical semigroup
		$$R_N = \{0, \textstyle\frac{N+1}{2}\} \cup E_N \cup \{ N+1,\ N+2, \ldots\}.$$
	where $E_N$ is the set of even numbers in $\left(\frac{N+1}{2}, N-1\right)$.
	We have that $P(R_N) = 2$ but $|M(R_N)| = 2 \left\lceil \frac{N - 1}{4} \right\rceil$.
\end{proposition}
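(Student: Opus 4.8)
The plan is to treat the two assertions separately, and to do the size computation first since everything else rests on it. Writing $c=\frac{N+1}{2}$ for the smallest nonzero element of $R_N$, one has $F(R_N)=N$ and every element of $[1,c-1]$ is a gap; a gap $i\in[1,c-1]$ lies in a missing pair exactly when its partner $N-i$ is also a gap, which happens precisely for $i=1$ and for even $i$ with $2\le i\le c-2$. Thus the ``small'' half of $M(R_N)$ is $L=\{1\}\cup\{\text{even }i:2\le i\le c-2\}$ and the ``large'' half is $U=\{N-i:i\in L\}$ (the odd gaps in $(c,N-1)$ together with $N-1$). Counting $L$ and splitting into $N\equiv 1$ and $N\equiv 3\pmod 4$ gives $|L|=\lceil\frac{N-1}{4}\rceil$, hence $|M(R_N)|=2|L|=2\lceil\frac{N-1}{4}\rceil$.

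For $P(R_N)=2$, set $S=R_N$ and let $F=N$. By Proposition \ref{MMProp1} and Lemma \ref{SunionMPs} every numerical set with atom monoid $S$ is $T=S\cup W$ for some $W\subseteq M(S)=L\cup U$, and $A(T)=S$ is equivalent to the two conditions (I) $S+W\subseteq T$ and (II) no $w\in W$ satisfies $w+T\subseteq T$. Since $W=\emptyset$ (giving $T=S$) and $W=M(S)$ (giving $T=S^*$) always qualify, it suffices to rule out every other $W$. The quantitative feature I would exploit is $c>F/2$: for any nonzero $s\in S$ and any $u\in U$ one has $s+u\ge c+(c+1)=N+2>F$, so the elements of $U$ impose no constraint in (I), and (I) is driven entirely by $W\cap L$.

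I would then dispose of the two ``pure'' cases cleanly. If $\emptyset\neq W\subseteq U$, the largest element $w^*$ of $W$ is an atom: $w^*+0=w^*\in T$, and for every nonzero $t\in T$ (necessarily $t\ge c$) we get $w^*+t\ge c+(c+1)>F$, hence $w^*+t\in S\subseteq T$; so $w^*\in A(T)\sminus S$ and $A(T)\neq S$. Thus $W\cap L=\emptyset$ forces $W=\emptyset$. For the dual case I would use the description of $T\mapsto T^*$: since $n\in T^*$ iff $F-n$ is a gap of $T$, this is a pure set computation showing that on each missing pair $\{x,F-x\}$ passage to the dual sends $\emptyset\mapsto\{x,F-x\}$ and $\{x,F-x\}\mapsto\emptyset$ while fixing the two singletons. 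Hence if $U\subseteq W\subsetneq M(S)$ then $T^*=S\cup W^*$ with $W^*=\{F-x:x\in L\sminus W\}\subseteq U$ nonempty, and since $A(T^*)=A(T)$, applying the previous case to $T^*$ gives $A(T)\neq S$. Thus $U\subseteq W$ forces $W=M(S)$.

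The remaining---and hardest---case is the ``mixed'' one, $W\cap L\neq\emptyset$ and $U\not\subseteq W$, where I must show $A(T)\neq S$. If (I) fails we are done, so assume (I) holds. Using the explicit form of $L,U$ and $E_N$, I would first check that $\Phi(\ell):=\{s+\ell:s\in\{c\}\cup E_N\}\cap U$ is nonempty for every $\ell\in L$ (e.g. $e+\ell=N-1$ for a suitable even $e\in E_N$, and $e+1$ is an odd gap in $U$), so that (I) forces $W\cap U\neq\emptyset$. I would then locate an element $u\in W\cap U$ that is an atom---a $U$-element whose partner $N-u$ is absent from $W$ and for which no $\ell\in W\cap L$ produces an escape $u+\ell\notin T$---contradicting (II). Pinning down such a $u$ is the main obstacle: whether a given $u\in W\cap U$ is rescued from atomhood depends on the parities of the available $\ell\in W\cap L$ and on which larger odd gaps already lie in $W$ (for instance $N-1$ is an atom whenever it lies in $W$ but $1\notin W$). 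I therefore expect this step to require an extremal choice of $u$ together with a short case split on $N\bmod 4$, i.e. on the parity of $c$, paralleling the computation that already underlies the count of $|M(R_N)|$.
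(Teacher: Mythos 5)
Your computation of $|M(R_N)|$ is correct and essentially the paper's (the paper counts missing pairs via the odd numbers in $(\frac{N+1}{2},N-1)$ plus the pair $\{1,N-1\}$; your count via the small halves $L=\{1\}\cup\{\text{even } i: 2\le i\le c-2\}$ gives the same answer). Your framework for $P(R_N)=2$ is also sound as far as it goes: writing $T=S\cup W$ with $W\subseteq M(S)$ is justified by Proposition \ref{MMProp1} and Lemma \ref{SunionMPs}, the equivalence of $A(T)=S$ with your conditions (I) and (II) is correct, the pure case $\emptyset\neq W\subseteq U$ is handled correctly (indeed every element of such a $W$ is an atom, since $u+t\geq 2c+1=N+2>F$ for nonzero $t\in T$), and your duality reduction of the case $U\subseteq W\subsetneq M(S)$ to the pure case via $A(T^*)=A(T)$ is a legitimate and rather elegant step that the paper does not use (the paper instead argues directly from elements of $T\cap(R_N^*\sminus R_N)$).

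However, there is a genuine gap: the mixed case $W\cap L\neq\emptyset$, $U\not\subseteq W$ is where essentially all of the content of the proposition lives, and you do not prove it --- you state a plan and explicitly flag its key step, exhibiting an atom $u\in W\cap U$, as an unresolved obstacle. Your preliminary sub-step (that (I) forces $W\cap U\neq\emptyset$, via $\Phi(\ell)\neq\emptyset$) checks out, but no single extremal choice of $u$ works uniformly: already for $N=11$, the set $W=\{1,7,9\}$ satisfies (I) and its unique atom is the \emph{smallest} element of $U$, while $W=\{2,10\}$ satisfies (I) and its atom is the \emph{largest}; which $u\in W\cap U$ is an atom depends on which elements of $L$ lie in $W$ and on which escapes $u+\ell$ land in $U\sminus W$ versus at $N$. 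What is actually needed is the cascading forcing argument the paper carries out: starting from any single element of $R_N^*\sminus R_N$ in $T$, the failure of the odd numbers in $(\frac{N+1}{2},N-1)$ to be atoms forces (after a reduction to the case $1\in T$) all even numbers in $(0,\frac{N-3}{2}]$ into $T$, whence $T=R_N^*$; this chain is uniform in $N$, so your anticipated case split on $N\bmod 4$ is likely a red herring. As written, your argument establishes $P(R_N)=2$ only for those $T$ with $T\sminus S$ contained in $U$ or containing $U$; the remaining case is an acknowledged conjecture rather than a proof.
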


\begin{proof}
The statement about $M(R_N)$ follows easily from the fact that $F(R_N) = N$ and the observation that every missing pair except $\{1,N-1\}$ is uniquely determined by an odd number in $\left(\frac{N+1}{2}, N-1\right)$.

Since $R_N$ is not symmetric we see that $R_N \subsetneq R_N^*$ and $A(R_N^*) = R_N$.  Suppose $T \neq R_N$ is a numerical set with $A(T) = R_N$.  By Proposition \ref{MMProp1} we have $T \subseteq R_N^*$.  We show that $P(R_N) = 2$ by showing that 
\[
R_N^* = \{N-u\ :\ u \in \Z \sminus R_N\} \subseteq T.
\] 

	
	Notice $\bb{N} \sminus R_N = \{1,\ldots,\frac{N-1}{2}\}\ \cup O_N \cup \{N-1\}$, where $O_N$ is the set of odd numbers in $[\frac{N+3}{2},N)$.
	Thus, $R_N^* = \{N - u : u \notin R_N\}$ is the set of even numbers in $[0,\frac{N-3}{2}]$ together with $\{1, \frac{N+1}{2},\ldots,N-1\}$ and $\{N+1,N+2,\ldots\}$.
	
	Since $T \neq R_N$ there exists some $t \in \left(R_N^*\sminus R_N\right) \cap T$.  Either $1\in T,\ T$ contains an even number in $(0,\frac{N-3}{2}],\ T$ contains $N-1$, or $T$ contains an odd number in $(\frac{N+1}{2},N-1)$.  In each case we will show that $T = R_N^*$.
	
	If $1 \in T$ then $A(T) = R_N$ implies that $\{\frac{N+1}{2},\ldots,N-2\} \subset T$ since $R_N$ contains the even numbers in this range.
	However, the odd numbers in this range are not in $A(T)$, meaning that for each $N - 2k$ there is some $s_k \in T$ such that $N-2k + s_k \notin T$.
	The only possibility is that $N-2k +s_k = N$, so $s_k = 2k$, meaning $T$ must contain the even numbers in $[0,\frac{N-3}{2}]$ and $T = R_N^*$.
	
	Suppose $T$ contains an even number $t \in (0,\frac{N-3}{2}]$.  Since $A(T)$ contains all even numbers in $(\frac{N+1}{2}, N-1)$ we see that $N-1-t \in A(T)$.  Since $t + (N-1-t) = N-1$, we have $N-1 \in T$.  However, $N-1\not\in A(T)$, so there must exist $u \in T$ with $N-1+u \not\in T$.  The only possibility is $u = 1$, which by the argument of the previous paragraph shows $T = R_N^*$.  
	
	Now suppose that $N-1\in T$.  Just as in the previous paragraph, since $N-1\not\in A(T)$ we see that $1\in T$ and $T = R_N^*$.
	
	Finally, suppose $T$ contains an odd number $t \in (\frac{N+1}{2},N-1)$.  Since $t \not\in A(T)$ there exists $u\in T$ such that $t+u \not\in T$.  Since $R_N$ contains all even numbers in $(\frac{N+1}{2},N-1)$ we either have $t+u$ equal to an odd number in $(\frac{N+1}{2},N-1)$ or equal to $N-1$.  In the first case $u$ is an even number in $(0,\frac{N-3}{2}]$, putting us in the situation described above, and we conclude $T = R_N^*$.  If $t+u = N-1$ then $u$ is an odd number in $(0,\frac{N-3}{2})$.  Since $u \in R_N^*$ we must have $u = 1$, putting us in the situation above, and we conclude that $T = R_N^*$.

	
	
	
	
\end{proof}

We now use a main result of Marzuola and Miller \cite{Marzuola} to study the opposite extreme, semigroups $S$ for which $M(S)$ is as large as possible given the genus of $S$.
\begin{proposition}\label{BigAtomMonoid}
	Let $S_{N} = \{0, N+1, N+2, \cdots\}$ be the numerical semigroup where $H(\varphi(S)) =  \{1, 2, \cdots, N\}$.  Then $P(S_{N}) \sim {c \cdot 2^{N}}$, where $c$ is a constant approximately equal to $0.2422$.
\end{proposition}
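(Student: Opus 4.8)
The plan is to translate $P(S_N)$ into a subset count and then feed it into the enumeration of Marzuola and Miller. First I would compute $S_N^{*}$: since $F(S_N)=N$ and $\N\sminus S_N=\{1,\dots,N\}$, the description $S_N^{*}=\{N-u:u\in\Z\sminus S_N\}$ gives $S_N^{*}=\N\sminus\{N\}$. By Proposition \ref{MMProp1}, every numerical set $T$ with $A(T)=S_N$ satisfies $S_N\subseteq T\subseteq S_N^{*}$, so $T=S_N\cup U$ for a unique $U\subseteq\{1,\dots,N-1\}$, and $P(S_N)$ counts those $U$ for which $A(S_N\cup U)=S_N$. Unwinding the definition of the atom monoid (as in Proposition \ref{HooksAreComplementOfAtom}) shows that for $m\in U$ one has $m\in A(T)$ exactly when $(m+U)\cap[1,N]\subseteq U$; hence $U$ is \emph{admissible} precisely when no $m\in U$ has this property, i.e.\ every $m\in U$ admits some $u\in U$ with $m+u\le N$ and $m+u\notin T$.

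Next I would identify this admissibility count with the quantity enumerated by Marzuola and Miller: $P(S_N)$ is exactly the number of numerical sets whose atom monoid is $S_N=\{0\}\cup\{N+1,N+2,\dots\}$, that is, the numerical sets with no small atoms below $N+1$. Their main result gives the asymptotics of this count, and applying it yields $P(S_N)\sim c\cdot 2^{N}$ with $c\approx 0.2422$; the work on our side is to check that the reduction above matches their enumeration and to read off the constant.

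To see why the growth rate is $2^{N}$ and to locate the constant, I would model $U$ as a uniformly random subset of $\{1,\dots,N-1\}$, so that $P(S_N)=2^{N-1}\Pr[U\text{ admissible}]$ and it suffices to show $\Pr[U\text{ admissible}]\to 2c$. The condition $m\in A(T)$ says that within each residue class modulo $m$ the set $U$ is upward closed in $[1,N]$ with $N$ forced to be a gap; counting such configurations shows that for $m\le N/2$ a spurious atom at $m$ occurs with probability at most $2^{-(N-1)}\prod_i(\ell_i+1)$, where the $\ell_i$ are the block lengths of the arithmetic progressions, and this is exponentially small, so these $m$ contribute $o(2^{N})$. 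The surviving obstructions are the spurious atoms $m=N-j$ with $j$ small, whose marginal probability is $\tfrac14(3/4)^{\,j-1}$; as $N\to\infty$ the coordinates near the Frobenius number $N$, together with the small elements $1,\dots,j$, become an $N$-independent family of independent fair coins, so $\Pr[U\text{ admissible}]$ converges to the probability that a fixed limiting process has no spurious atom.

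The main obstacle is evaluating this limiting probability exactly: the near-top obstruction events overlap both in their high coordinates $N-1,N-2,\dots$ and in their low coordinates $1,2,\dots$, so they are genuinely dependent and the limit is not a simple product. I expect to organize the computation as a transfer operator, equivalently a generating function in a variable $z$ marking position, whose dominant singularity lies at $z=\tfrac12$; this both explains the factor $2^{N}$ and produces $c$ as the associated leading constant, with the numerical value $c\approx 0.2422$ emerging from the analysis. The second technical point to pin down is the uniform negligibility of the sub-$N/2$ atoms, so that the limit is controlled entirely by the behavior just below the Frobenius number.
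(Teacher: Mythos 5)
Your reduction---writing $T = S_N \cup U$ with $U \subseteq \{1,\dots,N-1\}$ so that $P(S_N)$ is a proportion of the $2^{N-1}$ numerical sets with Frobenius number $N$, then citing the Marzuola--Miller limit $\gamma_N \to \gamma \approx 0.4844$ to get $c = \gamma/2 \approx 0.2422$---is exactly the paper's proof, which likewise simply quotes their main result rather than re-deriving it. Your last two paragraphs sketching a from-scratch derivation of the constant (random-subset model, transfer operator, dominant singularity at $z=\tfrac12$) are superfluous once their theorem is invoked, and are only a heuristic outline rather than a complete argument, but the citation-based core of your proposal is correct and coincides with the paper's.
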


\begin{proof}
	Let $\gamma_{N}$ be the ratio of the number of numerical sets with atom monoid $S_N$ to the number of numerical sets with Frobenius number $N$.
	One of the main results of \cite{Marzuola} is that the sequence $\{\gamma_{N}\}$ is decreasing and converges to a number $\gamma \approx 0.4844$ with accuracy to within $0.0050$.  Numerical sets with Frobenius number $N$ are in bijection with subsets of $\{1,\ldots, N-1\}$, so there are $2^{N-1}$ of them.  Therefore, $P(S_N) = \gamma_N \cdot 2^{N-1}$, completing the proof.
	
	
\end{proof}

	
We end this section by giving a link between the study of partitions with a given hook set and partitions that come from numerical semigroups under $\varphi$.



\begin{proposition}\label{PartitionsByMaxHook}
	Let $S(N)$ be the number of partitions with maximum hook length $N$ corresponding via $\varphi$ to numerical semigroups and let $T(N)$ be the number of partitions with maximum hook length $N$.
	Then,
		$$\lim_{n \to \infty} \frac {S(N)}{T(N)} = 0.$$
\end{proposition}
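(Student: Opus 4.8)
The plan is to translate both quantities through the bijection $\varphi$ and then exploit a symmetry constraint that numerical \emph{semigroups} satisfy but that general numerical sets do not. Recall from Section \ref{Correspondence} that the maximum hook length of $\varphi(T)$ equals $F(T)$, so a partition has maximum hook length $N$ exactly when its associated numerical set has Frobenius number $N$; moreover a partition corresponds to a numerical \emph{semigroup} precisely when its numerical set equals its own atom monoid. Hence $S(N)$ counts numerical semigroups with Frobenius number $N$, while $T(N)$ counts all numerical sets with Frobenius number $N$. As observed in the proof of Proposition \ref{BigAtomMonoid}, these numerical sets are in bijection with subsets of $\{1,\ldots,N-1\}$, so $T(N) = 2^{N-1}$.

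The key step is a pairwise constraint on semigroups: if $S$ is a numerical semigroup with $F(S) = N$, then for each $i \in [1,N-1]$ at most one of $i$ and $N-i$ lies in $S$. Indeed, were both in $S$, closure under addition would force $i + (N-i) = N \in S$, contradicting $N = F(S) \notin S$. I would then group $\{1,\ldots,N-1\}$ into the pairs $\{i, N-i\}$ (with the central value $N/2$ forced out of $S$ when $N$ is even). For each pair the constraint leaves at most three admissible configurations, so discarding all further closure requirements---which can only lower the count---yields
$$ S(N) \le 3^{\lceil (N-1)/2 \rceil} \le (\sqrt{3})^{N}. $$
Therefore
$$ \frac{S(N)}{T(N)} \le \frac{(\sqrt 3)^{N}}{2^{N-1}} = 2\left(\frac{\sqrt 3}{2}\right)^{N}, $$
which tends to $0$ as $N \to \infty$ because $\sqrt 3 / 2 < 1$.

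There is no serious obstacle here; the only care needed is the bookkeeping of the central term when $N$ is even and the observation that dropping the full closure condition genuinely produces an upper bound. The real content lies in recognizing that semigroup closure already imposes a symmetric-semigroup-like restriction---at most one element from each complementary pair $\{i, N-i\}$---which collapses the count from $2^{N-1}$ to at most $3^{N/2}$, an exponentially smaller quantity, immediately forcing the ratio to zero.
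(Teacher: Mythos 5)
Your proof is correct, but it takes a genuinely different route from the paper. Both arguments begin identically: via $\varphi$, partitions with maximum hook length $N$ correspond to numerical sets with Frobenius number $N$, so $T(N)=2^{N-1}$, and $S(N)$ counts numerical semigroups with Frobenius number $N$. The paper then simply invokes Backelin's theorem \cite{Backelin}, which bounds the number of such semigroups by $4\cdot 2^{\lfloor (N-1)/2\rfloor}$, giving $S(N)/T(N)\le 4\cdot 2^{-(N-1)/2}\to 0$. You instead prove an elementary bound from scratch: closure under addition forbids both $i$ and $N-i$ from lying in $S$ (else $N\in S$), so each complementary pair in $\{1,\dots,N-1\}$ admits at most three configurations (and $N/2$ is forced out when $N$ is even), whence $S(N)\le 3^{\lceil (N-1)/2\rceil}$ and the ratio is at most $2(\sqrt{3}/2)^N\to 0$. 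Your bookkeeping is sound: when $N$ is even there are only $\lfloor (N-1)/2\rfloor$ genuine pairs, so your ceiling is if anything generous, and dropping the remaining closure conditions can only enlarge the count. What the trade-off buys: your argument is self-contained and uses only the single most basic consequence of closure, at the cost of a weaker exponential rate ($3^{N/2}\approx 1.732^N$ versus Backelin's $2^{N/2}\approx 1.414^N$, which is known to be the correct order of growth); the paper's citation yields the sharper bound but imports a nontrivial external result. For the stated limit, which only requires $S(N)=o(2^N)$, either suffices.
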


We use a bound due to Backelin on the number of numerical semigroups with given Frobenius number.
\begin{theorem}[Theorem 1.1 in \cite{Backelin}]
The number of numerical semigroups $S$ with Frobenius number $N$ is at most $4 \cdot 2^{\left\lfloor{{(N-1)}/{2}}\right\rfloor}$.
\end{theorem}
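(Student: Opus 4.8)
The plan is to bound the number of gap sets directly. A numerical semigroup $S$ with Frobenius number $N$ is determined by its intersection with $[1,N-1]$, since $0 \in S$, $N \notin S$, and every integer exceeding $N$ lies in $S$; thus there are at most $2^{N-1}$ candidates to start with, and the content of the theorem is to cut this down to roughly $2^{N/2}$. The first step is the pairing observation: for every $i \in [1,N-1]$, at least one of $i$ and $N-i$ is a gap, since $i, N-i \in S$ would force $N = i + (N-i) \in S$, contradicting that $N$ is a gap. Grouping $[1,N-1]$ into the pairs $\{i, N-i\}$ for $i \in [1, \lfloor (N-1)/2 \rfloor]$ (with the midpoint $N/2$ itself forced to be a gap when $N$ is even), each pair realizes at most the three states ``only $i$ present'', ``only $N-i$ present'', and ``both absent''. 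This already yields the crude bound $3^{\lfloor (N-1)/2 \rfloor}$, but its base $3$ is too large; the real work is to replace $3$ by $2$ at the cost of a bounded constant.

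To do this I would split $[1,N-1]$ at $N/2$ into the upper window $U = \{x \in \Z : N/2 < x < N\}$ and the lower block $L = \{x \in \Z : 1 \le x \le N/2\}$, noting $|U| = \lfloor (N-1)/2 \rfloor$. The point of the split is that the upper window carries no internal additive constraint: if $a, b \in U$ then $a + b > N$, so $a+b$ automatically lies in $S$ and never equals $N$. Consequently, for any $A \subseteq U$ the set $\{0\} \cup A \cup \{N+1, N+2, \ldots\}$ is already a numerical semigroup with Frobenius number $N$, so each of the $2^{|U|} = 2^{\lfloor (N-1)/2 \rfloor}$ choices of $A$ genuinely occurs. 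Writing each semigroup uniquely as $S \leftrightarrow (A, B)$ with $A = S \cap U$ and $B = S \cap L$, the count becomes $\sum_{A \subseteq U} \#\{B\}$, and the target bound $4 \cdot 2^{\lfloor (N-1)/2\rfloor}$ is precisely the assertion that, on average, a fixed upper part $A$ admits at most $4$ lower completions $B$.

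The main obstacle is exactly this averaged bound, and it cannot come from a uniform per-$A$ estimate. The lower completions are forced to be ``numerical-semigroup-like'' fragments of $L$ whose minimal element $m$ has $2m$ again in $B$ or in $A$ and admits no element of $S$ completing it to $N$; by stuffing $A$ with many such doublings one can make a single $A$ admit on the order of $N$ distinct singleton completions $B = \{m\}$ with $m \in (N/3, N/2]$ (for instance, taking $A = \{2m : m \in (N/3,N/2] \cap \Z\}$ already produces about $N/6$ of them). Hence the constant $4$ must emerge from a global, amortized analysis: each extra lower completion pins down a corresponding element of $A$, so upper parts with many completions are correspondingly scarce, and the two effects have to be balanced. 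I would make this precise by reorganizing $\sum_A \#\{B\}$ through the additive interaction between $B$ and $A$---essentially a transfer-matrix recursion on the profile of $S$ read from $N$ downward, or an injection sending each pair $(A,B)$ with $B \neq \emptyset$ to a strictly smaller upper part together with a bounded label---and then controlling the residual boundary terms. Pinning the final constant to exactly $4$ is the delicate endgame, and is the part I expect to require the most care; it is what Backelin's argument carries out in detail.
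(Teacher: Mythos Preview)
The paper does not prove this statement at all: it is quoted as Theorem 1.1 of Backelin \cite{Backelin} and used as a black box in the proof of Proposition \ref{PartitionsByMaxHook}. So there is no ``paper's own proof'' to compare your proposal against.

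As for the proposal itself, it is a reasonable outline of the heuristics behind Backelin's bound---the pairing $\{i,N-i\}$, the split at $N/2$, and the observation that the upper half carries no internal additive constraints---but by your own admission it is not a proof. You correctly identify that the per-$A$ bound on lower completions fails uniformly, and then defer the essential step (the amortized analysis yielding the constant $4$) to ``what Backelin's argument carries out in detail.'' That is precisely the content of the theorem, so what you have written is a motivated setup rather than a proof. If the goal were to supply an independent argument, the proposal as it stands has a genuine gap: the transfer-matrix or injection scheme you allude to is not specified, and without it nothing better than the crude $3^{\lfloor (N-1)/2\rfloor}$ bound has been established.
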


\begin{proof}[Proof of Proposition \ref{PartitionsByMaxHook}]
	Partitions with maximum hook length $N$ are in bijection with numerical sets with Frobenius number $N$, so $T(N) = 2^{N-1}$.  Similarly, $S(N)$ is the number of numerical semigroups with Frobenius number $N$.  By Backelin's theorem, 
	$$ \frac{S(N)}{T(N)} \leq 4 \cdot 2^{ \left \lfloor{{(N-1)}/{2}}\right \rfloor - (N-1)} \leq 4\cdot 2^ {-(N-1)/2}, $$
and therefore
		$$\lim_{n \to \infty} \frac {S(N)}{T(N)} = 0.$$


\end{proof}


\section{Further questions}

We begin by returning to Problem \ref{Prob1}.  The simultaneous $(a,b_1,\ldots, b_m)$-core partitions are in bijection with integer points in a certain polytope.  We would like to be able to give formulas for the number of lattice points in this polytope and also for its volume.  Understanding these questions gives one approach to determining the correct leading coefficient of the quasipolynomial given in the second part of Theorem \ref{HellusTheorem} of Hellus and Waldi \cite{Hellus}.  The size of a partition corresponding to a lattice point comes from evaluating the quadratic function $F_a(x_1,\ldots, x_{a-1})$ of Section \ref{Correspondence}.  Under what circumstances can we give a nice description of the lattice point of this polytope on which this function takes its maximum value?  When can we give a nice expression for the average value of this function taken over all of these lattice points or give even more detailed statistical information about this set of values? We would like to have a better understanding of how tools from Ehrhart theory can be used to study these problems.

It seems likely that most partitions are not associated to numerical semigroups by the bijection $\varphi$, as most numerical sets are not closed under addition.  A subtle difficulty in addressing these types of questions comes from the fact that making statements about `most' partitions or `most' numerical sets requires an ordering.  The most natural ordering on partitions, in our opinion, is by size.  Proposition \ref{PartitionsByMaxHook} shows that if we instead order partitions by the size of their maximum hook length our intuition is correct.
\begin{conjecture}
Let $P(n)$ be the number of partitions of size at most $n$ and let $S'(n)$ be the number of these that are associated to numerical semigroups under $\varphi$.  Then
\[
\lim_{n \rightarrow \infty} \frac{S'(n)}{P(n)} = 0.
\]
\end{conjecture}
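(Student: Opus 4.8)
The plan is to compare the growth of $S'(n)$ and $P(n)$ through the bijection $\varphi$ and the weight statistic of a numerical semigroup. First I would record the denominator: $P(n)=\sum_{m=0}^{n}p(m)$, where $p(m)$ is the ordinary partition function, so by the Hardy--Ramanujan asymptotic $\log P(n)=\pi\sqrt{2n/3}+o(\sqrt n)$. For the numerator, Proposition~\ref{HooksAreComplementOfAtom} and the discussion of Section~\ref{sec-P(S)} identify $S'(n)$ with the number of numerical semigroups $S$ for which $\varphi(S)$ has size at most $n$. Writing the gaps of $S$ as $g_1<\cdots<g_{g}$ with $g=g(S)$, the size formula of Section~\ref{Correspondence} gives $|\varphi(S)|=\sum_i g_i-\binom{g}{2}=g+w(S)$, where $w(S)=\sum_i(g_i-i)$. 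Since the sequence $e_i:=g_i-i$ is non-decreasing and non-negative, its positive entries form a partition $\mu=\mu(S)$ with $|\mu|=w(S)$, and one checks that $\varphi(S)=(\mu_1+1,\dots,\mu_{\ell}+1,1^{\,g-\ell})$ with $\ell=\ell(\mu)$ and $F(S)=g+\mu_1$. Thus a semigroup-partition is an arbitrary partition $\mu$, with $1$ added to each part, followed by a block of $g-\ell$ parts equal to $1$.

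The combinatorial heart of the argument is to quantify how long that block of $1$'s must be. Closure of $S$ under addition is a condition only on the small elements of $S$, and I would prove a two-sided estimate $\tau(\mu):=\min\{g:(\mu,g)\text{ is a semigroup}\}=2\ell(\mu)+\mu_1+O(1)$, together with the fact that every $g\ge\tau(\mu)$ is admissible. The model case $\mu=(1^\ell)$, whose threshold is exactly $2\ell$, already exhibits the mechanism: the multiplicity is forced to be about $g-\ell$ while $F(S)=g+\mu_1$, and the closure requirement is essentially $2\cdot(\text{mult})>F(S)$. For the upper bound on $S'(n)$ only the \emph{necessary} direction $\tau(\mu)\ge 2\ell(\mu)+\mu_1-O(1)$ is used: it forces every admissible $g$ to satisfy $g\ge 2\ell(\mu)+\mu_1-O(1)$. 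Granting it, $S'(n)$ is at most the number of pairs $(\mu,g)$ with $g+|\mu|\le n$ and $g\ge 2\ell(\mu)+\mu_1-O(1)$, so that
\[
S'(n)\ \le\ [x^{\,n+O(1)}]\,\frac{G(x)}{(1-x)^{2}},\qquad G(x)=\sum_{M\ge 1}x^{\,2M+2}\prod_{k=1}^{M}\frac{1}{1-x^{\,k+2}},
\]
the factor $x^{k+2}$ encoding the simultaneous cost $k$ in size and $2$ in the threshold of a part of length $k$, and $x^{2M}$ the extra cost of the largest part.

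The final step is a singularity analysis of $G$ in the style of Meinardus, set against $\sum_m p(m)x^m/(1-x)$. The key—and delicate—point is that $G$ has the \emph{same} dominant singularity as the partition generating function: as $\epsilon\to0^{+}$ one expects $\log G(e^{-\epsilon})=\pi^{2}/(6\epsilon)-c\log(1/\epsilon)+O(1)$ with a constant $c>0$ produced precisely by the weights $x^{k+2}$ and $x^{2M}$, and with $c$ strictly larger than the value $\tfrac12$ governing the unrestricted partition product. Consequently $S'(n)$ and $P(n)$ share the exponential factor $e^{\pi\sqrt{2n/3}}$, and the entire statement reduces to showing that the ratio of their sub-exponential prefactors—a power of $n$ dictated by $c$—tends to $0$; the first computed values, $S'(n)/P(n)=\tfrac57,\tfrac{7}{12},\tfrac{10}{19},\dots$, are consistent with a slow polynomial decay.

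I expect the main obstacle to be exactly this last comparison. A crude bound on the number of numerical semigroups of bounded size—say via $|\varphi(S)|\ge F(S)$ together with Backelin's estimate for semigroups of a given Frobenius number—only yields $e^{c'\sqrt n}$ with $c'$ too large, because the exponential growth rates of $S'(n)$ and $P(n)$ genuinely coincide; no injection into partitions of size $n-\omega(\sqrt n)$ can exist. One therefore cannot avoid extracting the precise sub-exponential correction, which demands both a uniform and exact form of the threshold $\tau(\mu)$ (so that $G$ is provably correct rather than heuristic) and a Meinardus-type theorem valid for the nonstandard weighted product above. It is this pairing of a sharp combinatorial closure criterion with sharp additive-asymptotic bookkeeping that makes the statement subtle, and presumably why it is recorded here only as a conjecture.
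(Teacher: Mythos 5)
This statement appears in the paper only as a conjecture: the paper supplies no proof, offering just the remark that Ehrhart-theoretic and volume techniques might attack the analogous problem for $a$-cores. So there is no argument of the authors to compare yours against, and your proposal must stand on its own; as written it is a program rather than a proof, and its combinatorial foundation fails. To be clear about what is right: your translation is correct --- writing the gaps of $T=\varphi^{-1}(\lambda)$ as $g_1<\cdots<g_g$, one has $|\lambda| = g + w$ with $w = \sum_i (g_i - i)$, the positive values $e_i = g_i - i$ form a partition $\mu$ with $\lambda = (\mu_1+1,\dots,\mu_\ell+1,1^{g-\ell})$ and $F = g + \mu_1$; your computed ratios $5/7$, $7/12$, $10/19$ check out; and the sufficiency half of your threshold claim is fine, since when $g \ge 2\ell(\mu)+\mu_1-1$ twice the multiplicity $g-\ell+1$ exceeds $F = g+\mu_1$, making closure automatic.

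The fatal gap is the necessary direction $\tau(\mu) \ge 2\ell(\mu)+\mu_1-O(1)$, which is the only ingredient your upper bound on $S'(n)$ actually uses, and it is false --- as is the claim that every $g \ge \tau(\mu)$ is admissible. Take $S = \langle 2, 2k+1 \rangle$: the gaps are $g_i = 2i-1$, so $e_i = i-1$, $\mu = (k-1,k-2,\dots,1)$, $\ell(\mu) = \mu_1 = k-1$, while the genus is only $g = k$; thus $g = k$ is admissible although $2\ell(\mu)+\mu_1 = 3k-3$, a violation that grows without bound (other two-generated semigroups $\langle a,b \rangle$ behave similarly). Moreover the admissible genera for fixed $\mu$ need not form a ray: with the same staircase $\mu$ and genus $k+1$ the gap set becomes $\{1,2,4,6,\dots,2k\}$, i.e. $T = \{0,3,5,\dots,2k-1\} \cup \{2k+1,\to\}$, which is not closed under addition since $3+3=6$ is a gap once $k \ge 3$; so no single threshold $\tau(\mu)$ can encode the semigroup condition. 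Consequently $G(x)/(1-x)^2$ is not an upper generating function for $S'(n)$ and the reduction to singularity analysis collapses at its first step. Even granting a corrected combinatorial criterion, the two analytic claims you would still need --- the expansion $\log G(e^{-\epsilon}) = \pi^2/(6\epsilon) - c\log(1/\epsilon) + O(1)$ with $c > 1/2$, and the coincidence of the exponential orders of $S'(n)$ and $P(n)$ --- are stated as expectations, not proved, as you yourself concede. What you have is a structured heuristic consistent with the data; the conjecture remains open after your argument.
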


We ask a similar question for $a$-cores.
\begin{problem}
Let $a \ge 2$ be a positive integer, $P_a(n)$ be the number of $a$-core partitions of size at most $n$, and $S'_a(n)$ be the number of these partitions associated to numerical semigroups under $\varphi$.  Determine
\[
\lim_{n \rightarrow \infty} \frac{S_a'(n)}{P_a(n)}
\]
as a function of $a$.
\end{problem}
An easier subproblem would be to show that as $a$ goes to infinity, this limit goes to zero.  Consider the rational polyhedral cone giving the condition that an $a$-core comes from a semigroup and intersect it with the region where the quadratic function $F_a(x_1,\ldots, x_{a-1}) \le 1$.  It seems likely that techniques from Ehrhart theory combined with the volume of this set can be used to solve this problem. 


	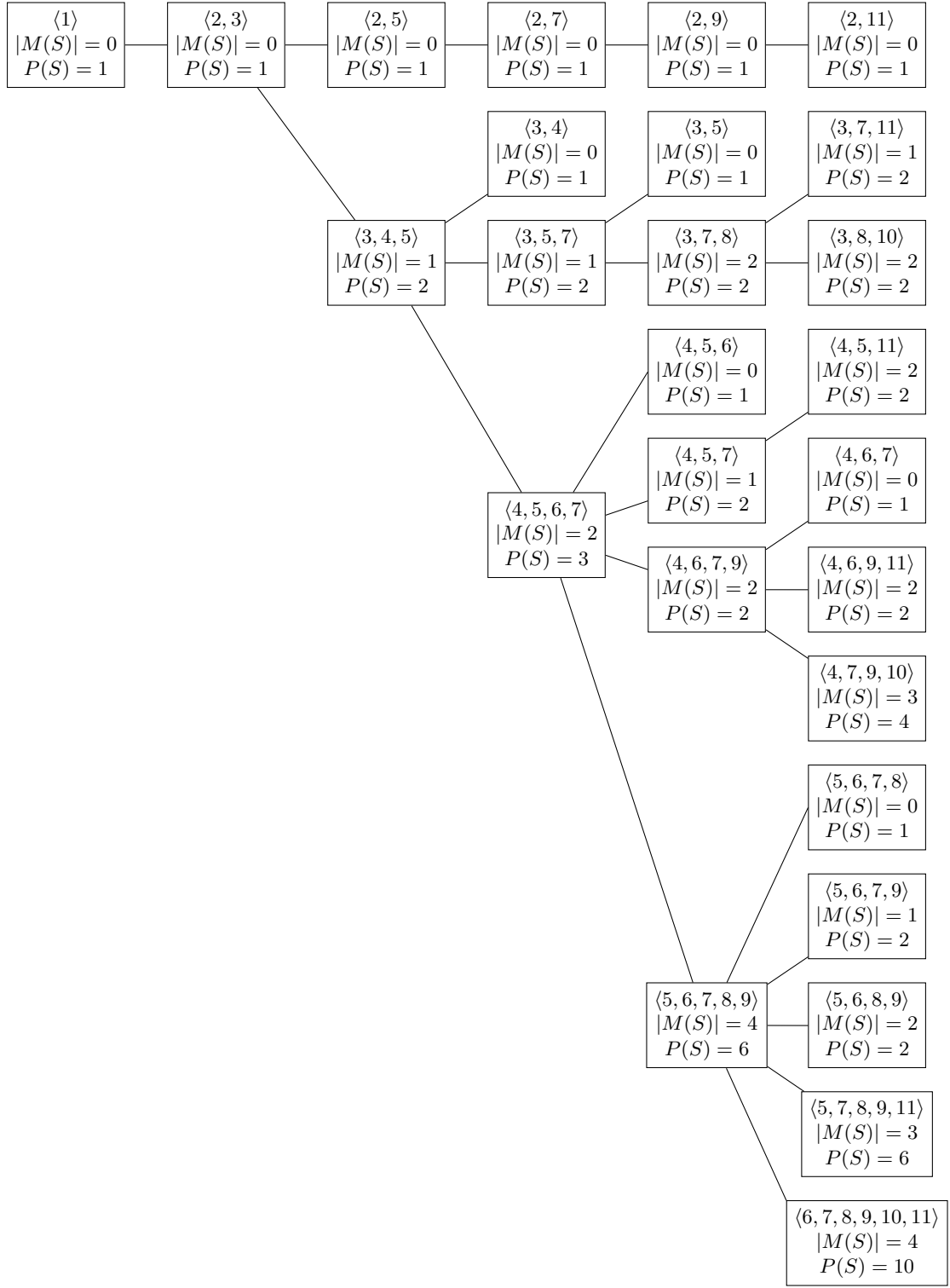
\begin{figure}[p] \label{semigroupTree}
		\centering
		\caption{The semigroup tree, with the root $\bb{N}$ on the left.
			Semigroups with a common genus are found in the same column, and each semigroup $S$ is labeled with $|M(S)|$ and $P(S)$.}
		\begin{tikzpicture}[xscale=2.5,yscale=1.7,
				font=\small]
			\node[draw,align=center] (1) at (0,0)
				{$\langle 1 \rangle$ \\ $|M(S)|=0$ \\ $P(S) = 1$};
			\node[draw,align=center] (23) at (1,0)
				{$\langle 2,3 \rangle$ \\ $|M(S)|=0$ \\ $P(S) = 1$}
				edge (1);
			\node[draw,align=center] (25) at (2,0)
				{$\langle 2,5 \rangle$ \\ $|M(S)|=0$ \\ $P(S) = 1$}
				edge (23);
			\node[draw,align=center] (345) at (2,-2)
				{$\langle 3,4,5 \rangle$ \\ $|M(S)|=1$ \\ $P(S) = 2$}
				edge (23);
			\node[draw,align=center] (27) at (3,0)
				{$\langle 2,7 \rangle$ \\ $|M(S)|=0$ \\ $P(S) = 1$}
				edge (25);
			\node[draw,align=center] (34) at (3,-1)
				{$\langle 3,4 \rangle$ \\ $|M(S)|=0$ \\ $P(S) = 1$}
				edge (345);
			\node[draw,align=center] (357) at (3,-2)
				{$\langle 3,5,7 \rangle$ \\ $|M(S)|=1$ \\ $P(S) = 2$}
				edge (345);
			\node[draw,align=center] (4567) at (3,-4.5)
				{$\langle 4,5,6,7 \rangle$ \\ $|M(S)|=2$ \\ $P(S) = 3$}
				edge (345);
			\node[draw,align=center] (29) at (4,0)
				{$\langle 2,9 \rangle$ \\ $|M(S)|=0$ \\ $P(S) = 1$}
				edge (27);
			\node[draw,align=center] (35) at (4,-1)
				{$\langle 3,5 \rangle$ \\ $|M(S)|=0$ \\ $P(S) = 1$}
				edge (357);
			\node[draw,align=center] (378) at (4,-2)
				{$\langle 3,7,8 \rangle$ \\ $|M(S)|=2$ \\ $P(S) = 2$}
				edge (357);
			\node[draw,align=center] (456) at (4,-3)
				{$\langle 4,5,6 \rangle$ \\ $|M(S)|=0$ \\ $P(S) = 1$}
				;
			\draw (4567) -- (456.west);
			\node[draw,align=center] (457) at (4,-4)
				{$\langle 4,5,7 \rangle$ \\ $|M(S)|=1$ \\ $P(S) = 2$}
				edge (4567);
			\node[draw,align=center] (4679) at (4,-5)
				{$\langle 4,6,7,9 \rangle$ \\ $|M(S)|=2$ \\ $P(S) = 2$}
				edge (4567);
			\node[draw,align=center] (56789) at (4,-9)
				{$\langle 5,6,7,8,9 \rangle$ \\ $|M(S)|=4$ \\ $P(S) = 6$}
				edge (4567);
			\node[draw,align=center] (211) at (5,0)
				{$\langle 2,11 \rangle$ \\ $|M(S)|=0$ \\ $P(S) = 1$}
				edge (29);
			\node[draw,align=center] (3711) at (5,-1)
				{$\langle 3,7,11 \rangle$ \\ $|M(S)|=1$ \\ $P(S) = 2$}
				edge (378);
			\node[draw,align=center] (3810) at (5,-2)
				{$\langle 3,8,10 \rangle$ \\ $|M(S)|=2$ \\ $P(S) = 2$}
				edge (378);
			\node[draw,align=center] (4511) at (5,-3)
				{$\langle 4,5,11 \rangle$ \\ $|M(S)|=2$ \\ $P(S) = 2$}
				edge (457);
			\node[draw,align=center] (467) at (5,-4)
				{$\langle 4,6,7 \rangle$ \\ $|M(S)|=0$ \\ $P(S) = 1$}
				edge (4679);
			\node[draw,align=center] (46911) at (5,-5)
				{$\langle 4,6,9,11 \rangle$ \\ $|M(S)|=2$ \\ $P(S) = 2$}
				edge (4679);
			\node[draw,align=center] (47910) at (5,-6)
				{$\langle 4,7,9,10 \rangle$ \\ $|M(S)|=3$ \\ $P(S) = 4$}
				edge (4679);
			\node[draw,align=center] (5678) at (5,-7)
				{$\langle 5,6,7,8 \rangle$ \\ $|M(S)|=0$ \\ $P(S) = 1$}
				;
			\node[draw,align=center] (5679) at (5,-8)
				{$\langle 5,6,7,9 \rangle$ \\ $|M(S)|=1$ \\ $P(S) = 2$}
				edge (56789);
			\node[draw,align=center] (5689) at (5,-9)
				{$\langle 5,6,8,9 \rangle$ \\ $|M(S)|=2$ \\ $P(S) = 2$}
				edge (56789);
			\node[draw,align=center] (578911) at (5,-10)
				{$\langle 5,7,8,9,11 \rangle$ \\ $|M(S)|=3$ \\ $P(S) = 6$}
				edge (56789);
			\node[draw,align=center] (67891011) at (5,-11)
				{$\langle 6,7,8,9,10,11 \rangle$ \\ $|M(S)|=4$ \\ $P(S) = 10$}
				;
			\draw (56789) -- (67891011.north west);
			\draw (56789) -- (5678.west);
		\end{tikzpicture}
	\end{figure}

We would also like to better understand how to use techniques from the first part of this paper to study $P(S)$.  Suppose that $S$ is a numerical semigroup containing $a$.  Then every partition with hook set $S$ corresponds to a point in $\N^{a-1}$ by taking the Ap\'ery tuple of the corresponding numerical set.  Can we say anything meaningful about the geometry of this finite set of points?  We would also like to know the largest, smallest, and average size of a partition with hook set $S$.

We would also like to better understand the properties of $S$ that control the size of $P(S)$.  We have started to explore the link between the size of the set of missing pairs, $M(S)$, and the number of partitions with this hook set.  We include some data related to this question.  The \emph{semigroup tree} allows us to visualize easily the relationship between numerical semigroups via their \emph{effective generators}, the minimal generators greater than the Frobenius number.
The tree is constructed as follows: the vertices of the tree are numerical semigroups, with the root as $\N$; for each vertex $S$ in the tree, the children of this semigroup are the semigroups obtained from $S$ by removing an effective generator. Each semigroup appears in the tree exactly once, and the distance between $S$ and the root is exactly the genus of $S$. For more information about the semigroup tree, see \cite{Bras}.

Figure 2 shows the first 6 layers of the semigroup tree, in which each semigroup $S$ is labeled with $|M(S)|$ and $P(S)$.  Every semigroup generated by two elements is symmetric, so we see that these all satisfy $|M(S)| = 0$ and $P(S) = 1$.  We also see that the semigroups $\langle g+1,g+2,\ldots, 2g+1\rangle$ are those which have the largest values of $P(S)$ at a given genus.


Lastly, throughout this paper we have explored the properties of hook sets of partitions, but have not really commented on hook multisets.  We would like to better understand what properties of a multiset make it occur as the hook multiset of many different partitions.  A good starting place might be a careful examination of the constructions given by Chung and Herman \cite{Chung}, and by Craven \cite{Craven}.



\section{Acknowledgments}
The third author thanks Mel Nathanson for organizing the 2010 CANT conference where he first learned about core partitions and their connection to numerical semigroups in a talk by William Keith.  He also thanks Maria Monks Gillespie for helpful discussions on early parts of this project.

We would like to thank Florencia Orosz-Hunziker and Daniel Corey for their assistance throughout this project.  Finally, we would like to thank the Summer Undergraduate Math Research at Yale program for organizing, funding, and supporting this project.  SUMRY is supported in part by NSF grant CAREER DMS-1149054.


\end{document}